\documentclass[reqno,11pt]{amsart}
\usepackage{amsmath,amssymb,mathrsfs,color}
\usepackage{graphicx}
\usepackage{epstopdf}
\usepackage{soul}
\usepackage{cases}
\usepackage{hyperref}
\hypersetup{hypertex=true,
	colorlinks=true,
	linkcolor=blue,
	anchorcolor=blue,
	citecolor=blue}
\usepackage[margin=1in]{geometry}

\def\b1{{\mathbbm 1}}

\allowdisplaybreaks

\usepackage{
	indentfirst, latexsym, bm, enumerate,
	wrapfig, adjustbox, subfigure, appendix,
	float, bbding, xcolor, multirow, framed, lipsum, epsdice,
	url, amsthm, booktabs, makecell, caption, algpseudocode, pifont,
	balance, dcolumn, fontenc, extarrows, arydshln 
}
\allowdisplaybreaks[4]
\usepackage{tikz}
\usetikzlibrary{arrows}

\newcommand*{\dif}{\ \mathrm{d}}

\newcommand{\ebra}[1]{(#1)}
\newcommand{\eangle}[1]{\langle#1\rangle}

\newcommand{\norm}[1]{\left\Vert #1\right\Vert}

\newtheorem{lemma}{Lemma}[section]
\newtheorem{theorem}[lemma]{Theorem}
\theoremstyle{definition}  
\newtheorem{remark}[lemma]{Remark}

\newtheorem{corollary}[lemma]{Corollary}

\newtheorem{example}[lemma]{Example}
\allowdisplaybreaks
\numberwithin{equation}{section}

\newtheorem{prfbase}{Proof}[section]

\renewcommand{\hl}[1]{#1}

\begin{document}
	\author{Junlang Hu}
	\address{J. Hu: School of Mathematical Sciences,
		Dalian University of Technology, Dalian 116024, P. R. China}
	\email{junlanghu@hotmail.com}
	
	\author{Zhenxin Liu}
	\address{Z. Liu (Corresponding author): School of Mathematical Sciences,
		Dalian University of Technology, Dalian 116024, P. R. China}
	\email{zxliu@dlut.edu.cn}
	
	\title[McKean-Vlasov Phase Transitions via bifurcation theory]{Continuity and Discontinuity of McKean-Vlasov Phase Transitions via Bifurcation Theory}
	
	\date{\today}
	\subjclass[2020]{35Q83, 35Q70, 34K18, 82C22.}
	\keywords{\hl{McKean-Vlasov SDE, Nonlocal PDE, Phase transitions, Bifurcation theory, Invariant measure, Interacting particle system}}
	
	\begin{abstract}
		It is well known that the McKean-Vlasov stochastic differential equation with a symmetric double-well potential exhibits a continuous phase transition. In contrast, for an asymmetric double-well potential, the system undergoes a discontinuous phase transition, in which an invariant measure abruptly appears in the shallower well and subsequently splits into two as the temperature decreases.  In this work, we systematically investigate the types of phase transitions driven by non-convex confining potentials and cooperative interactions in $\mathbb{R}^n$. Our main results consist of a pitchfork bifurcation theorem characterizing continuous phase transitions and a saddle-node bifurcation theorem governing discontinuous phase transitions. In addition, despite the dimension-dependent nature of phase transitions, we are able to extend Dawson's criterion for phase transition points -- originally formulated for quadratic interactions in $1$-dimensional space -- to $n$-dimensional space. This extension directly relates the critical parameter and bifurcation direction to the eigenvalue and eigenvector of the critical covariance matrix, respectively. Finally, we apply our theoretical results to the aforementioned double-well models, to a class of four-well models in two-dimensional Euclidean space that exhibit multiple phase transitions, and to an attractive Gaussian interaction model. 
	\end{abstract}
	
	\maketitle
	\tableofcontents  
	
	\section{Introduction}
	\subsection{Model formulation}
	A phase transition is typically understood as a qualitative change in the limiting state of a system as its underlying parameters vary. Representative examples include variations in the number of invariant measures or transitions from invariant to periodic measures. This is precisely the central concern of bifurcation theory. We employ an abstract bifurcation framework to investigate the specific types of phase transitions occurring in gradient-type McKean-Vlasov stochastic differential equations (MVSDEs, also referred to as mean-field SDEs) in $\mathbb{R}^n$
	\begin{equation}\label{eq.MVSDE}
		\begin{aligned}
			\text{d}X_t = -\nabla V\ebra{X_t} \text{d}t - \kappa\cdot \ebra{\nabla W* \mathcal{L}_{X_t}}\ebra{X_t} + \sigma \text{d} B_t,
		\end{aligned}
	\end{equation}
	\hl{where $V$ and $W$ represent the external potential and the interaction potential, respectively. The positive parameters $\sigma$ and $\kappa$ denote the strengths of the white noise and the interaction. The symbol $*$ denotes convolution, and $\mathcal{L}_{X_t}$ represents the law of $X_t$. }Equation \eqref{eq.MVSDE} originates from the mean-field limit of an $N$-particle system, where each particle interacts with all others through the kernel $\nabla W$ with equal weight of $1/N$, and $B^i_t$ ($i=1, \dots, N$) are $N$ independent $\mathbb{R}^n$-valued Brownian motions
	\begin{equation}\label{eq.N-par}
		\text{d}X_t^i = -\nabla V\ebra{X_t^i} \text{d}t - \kappa\cdot \frac{1}{N} \sum_{j \neq i} \nabla W\ebra{X_t^i - X_t^j} \text{d}t + \sigma \text{d}B_t^i, \qquad i \in \{1, \dots, N\}.
	\end{equation}
	The propagation of chaos argument \cite{McK67, Szn91} states that, provided the particles are exchangeable, correlations among any fixed finite subset of particles vanish as $N \to +\infty$. Consequently, the particles become asymptotically independent, making it sufficient to investigate the limiting dynamical behavior of a single particle via the mean-field equation \eqref{eq.MVSDE}. We emphasize that when phase transitions occur, this approximation is not uniform-in-time; that is, the limits $N \to \infty$ and $t \to \infty$ cannot be interchanged \cite{DGP21}. As a result, the long-time behaviors of \eqref{eq.MVSDE} and \eqref{eq.N-par} can be drastically distinct. Under a standard dissipativity assumption, the $N$-particle system possesses a unique invariant measure, whereas the mean-field equation may exhibit multiple invariant measures or even periodic measures. Nevertheless, the mean-field limit remains ``close to'' the finite $N$-particle system over large but finite time horizons, provided that $N$ is sufficiently large \cite{Mon25, LcP25}.
	
	To apply bifurcation methods, we henceforth study the nonlinear Fokker-Planck equation that describes the time evolution of the law of the solution to \eqref{eq.MVSDE}
	\begin{equation}\label{eq.1}
		\partial_t \mu_t = \frac{\sigma^2}{2} \Delta \mu_t + \text{div} \ebra{\nabla V \mu_t} + \text{div} \ebra{\kappa \cdot \ebra{\nabla W * \mu_t} \mu_t}.
	\end{equation}
	\hl{Invariant measures} of \eqref{eq.1} are associated with the nonlocal elliptic PDE obtained by setting $\partial_t \mu_t = 0$. From a variational perspective, \eqref{eq.1} can be identified as a gradient flow of the free energy functional $\mathcal{H}$ in the $2$-Wasserstein space
	\begin{equation*}\label{eq.9}
		\begin{aligned}
			\partial_t \mu_t & = -\nabla_\text{W} \mathcal{H}\ebra{\mu_t}, \quad \mathcal{H}\ebra{\mu} = \int_{\mathbb{R}^n} \frac{\sigma^2}{2} \cdot \mu \log \mu  + \ebra{V + \frac{\kappa}{2} \cdot W * \mu} \mu \dif x,
		\end{aligned}
	\end{equation*}
	where $\nabla_{\operatorname{W}}$ denotes the formal Wasserstein gradient, and the interaction potential $W$ is required to be symmetric (even) within this gradient flow framework. This underlying gradient flow structure was rigorously established in the seminal works \cite{JKO98, CMV03}. Within this geometric setting, invariant measures correspond precisely to the critical points of the free energy $\mathcal{H}$, and are thus the zeros of the associated energy dissipation rate functional
	\begin{equation}\label{eq.10}
		\begin{aligned}
			\partial_t \mathcal{H}\ebra{\mu_t} &= -\eangle{\nabla_W H\ebra{\mu_t}, \nabla_W H\ebra{\mu_t}}_{\mu_t}\\
			&= - \int_{\mathbb{R}^n} |\frac{\sigma^2}{2} \cdot \nabla \log\mu_t + \nabla V + \kappa \cdot \nabla W * \mu_t|^2 \mu_t \dif x=0.
		\end{aligned}
	\end{equation}
	The equation $\ebra{\ref{eq.10}}$ leads to a self-consistent implicit equation for invariant measures
	\begin{equation*}\label{eq.7}
		\begin{aligned}
			\mu = \frac{1}{Z\ebra{\mu, \kappa, \sigma}} \cdot \exp\ebra{\ebra{-V - \kappa \cdot W * \mu}\cdot \frac{2}{\sigma^2}},
		\end{aligned}
	\end{equation*}
	with $Z\ebra{\mu, \kappa, \sigma} = \int_{\mathbb{R}^n} \exp \ebra{\ebra{-V - \kappa \cdot W * \mu}\cdot 2/\sigma^2} \dif x$ being the
	normalizing constant. This formulation provides a framework for applying abstract bifurcation theory. 
	
	\subsection{Related work}
	\begin{itemize}
		\item \textbf{Early work.} The propagation of chaos, critical dynamics, fluctuations, and large deviations for the classical cooperative double-well model were extensively investigated in the foundational works of Dawson \cite{Daw83, DG89}. In particular, the weak Riemannian structure of the space of probability measures was heuristically discussed in \cite{DG89}. Crucially, the extension of the Ellis-Monroe-Newman's GHS inequality to continuous-space measures played a pivotal role in characterizing the system's global dynamical behavior \cite{Ell75, EMN76}. Early contributions to the bifurcation analysis and stability of invariant measures were also made by Tamura \cite{Tam84}. Going beyond the multiplicity of steady states, Scheutzow demonstrated that cooperative interactions can even give rise to periodic measures \cite{Sch86}.
		
		\item \textbf{McKean-Vlasov dynamics in $\mathbb{R}^n$.} \hl{Building upon these earlier foundations, the field has seen several notable advancements in recent years. For instance, recent studies address the non-uniqueness and global convergence of invariant measures via the underlying gradient flow structure }\cite{Tug13, HT10}, \hl{while a concise proof of convergence within the Wasserstein framework is provided in {\cite{Bas20}}. Furthermore, recent work establishes connections between several foundational concepts {\cite{DGPS23}}. Long-time behavior has also been analyzed by utilizing uniform-in-particle concentration inequalities {\cite{LWZ21}}. Additional progress includes establishing the local existence of invariant measures for MVSDEs with jumps in the low-temperature regime through a local dissipative condition {\cite{BW25}}. Researchers have also characterized the emergence of periodic measures within coupled neural models {\cite{MQT16}}, slow-fast system frameworks {\cite{LcP20}}, and via Poincar\'{e} maps {\cite{Sch86}}.}
		
		\item \textbf{Abstract bifurcation theory and phase transitions.} Local bifurcation theory has been widely developed to characterize systems near critical parameter values that correspond to transitions between distinct steady states. The classical local bifurcation framework in infinite-dimensional Banach spaces was established by Crandall and Rabinowitz \cite{CR71, CR73}, providing a powerful tool for describing phase transitions. Carrillo, Gvalani, Pavliotis, and Schlichting \cite{CGPS20} insightfully leveraged these techniques to study different types of McKean-Vlasov phase transitions driven by pure interactions and diffusion on the torus. Their motivating examples include the noisy Kuramoto, Keller-Segel, and opinion dynamics models, where the compactness of the torus and its inherent Fourier basis structure significantly simplify the analysis. While phase transitions in finite volumes were studied in \cite{MR2594901}, analyzing continuous and discontinuous phase transitions in the unbounded space $\mathbb{R}^n$ requires fundamentally different treatments. Recently, Zhang established a Krasnosel'skii-type local bifurcation theorem \cite{Zha25} to investigate continuous bifurcations from a family of trivial solutions, utilizing the classical one-dimensional double-well model as a prototypical example. Notably, besides static bifurcations derived from self-consistent implicit equations, Hopf bifurcations can also emerge in mean-field dynamics, as demonstrated by Cormier, Tanr\'e, and Veltz \cite{CTV21}. In all these settings, a major analytical challenge lies in verifying the abstract spectral conditions for concrete models. 
		
		\item \textbf{Recent works.} While by no means exhaustive, we highlight several recent preprints that are closely related to our work. Liu, Qu, Yao, and Zhi \cite{LQYZ26}, alongside Qu, Yao, and Zhi \cite{QYZ26}, investigated global dynamics by leveraging stochastic order relations and Zhang's local dissipative condition. Extending the framework of \cite{CGPS20}, Shalova and Schlichting \cite{SS25} analyzed phase transitions on high-dimensional spheres and other Riemannian manifolds. Concurrently, Gerber, Gvalani, Hairer, Pavliotis, and Schlichting \cite{GGH25} observed that the coalescence of clusters in finite $N$-particle dynamics cannot be fully captured by the corresponding mean-field limit. Cluster formation in kinetic Langevin interacting particle systems under a low-friction regime was further explored by Leimkuhler, Lohmann, Pavliotis, and Whalley \cite{LLPW25}. Finally, Balasubramanian, Banerjee, and Rigollet \cite{BBR25} studied phase transitions on a circle using bifurcation theory, with a particular emphasis on applications to transformer architectures. Their theory is formulated in a sequence space generated by Fourier coefficients rather than in a traditional function space.
	\end{itemize}
	
	Our results are closely related to, yet distinct from, those of Carrillo, Gvalani, Pavliotis, and Schlichting \cite{CGPS20} and Zhang \cite{Zha25}. The work in \cite{CGPS20} illuminated the deep connections between phase transitions and bifurcations. Their framework was established for MVSDEs on compact manifolds without a confining potential. Zhang \cite{Zha25} rigorously applied a local Krasnosel'skii bifurcation theorem to MVSDEs with confining potentials in $\mathbb{R}^n$. His analysis focuses primarily on continuous phase transitions under the assumption that a family of trivial solutions exists. 
	
	\hl{The present paper contributes theoretical perspectives that extend current understanding in the literature. Specifically, we provide theoretical proofs for the discontinuous phase transition phenomena previously observed in numerical studies by utilizing an abstract saddle-node bifurcation theorem. Additionally, our approach to continuous phase transitions via an abstract pitchfork bifurcation theorem and the $n$-dimensional generalization of Dawson's criterion expands the available analytical tools in this area. Moreover, a monotonicity inequality for the moment with respect to temperature is also established in Appendix {\ref{numer}}, which is of independent interest.} The multiplicity of invariant measures for general potentials in the small-noise regime has also been investigated in many existing works \cite{LWZ21, DGPS23, LQYZ26}. In contrast to these works, our results primarily focus on bifurcation, especially the (dis)continuity of phase transitions at the critical temperature.
	
	\subsection{Numerical observations}\label{obser}
	\begin{figure}[htb]
		\centering
		\includegraphics[width=0.8\linewidth]{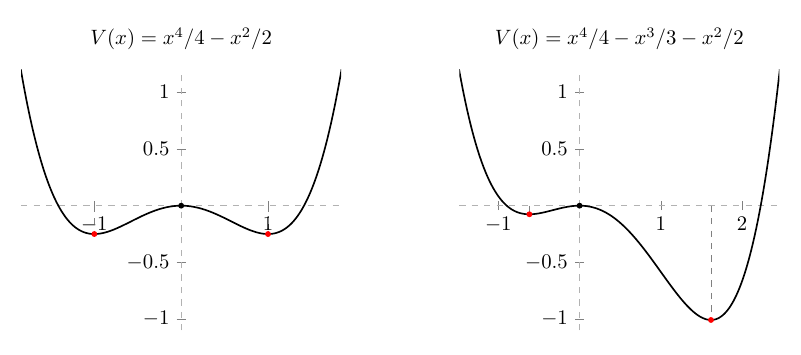}
		\caption{Sketches of symmetric and asymmetric double-well potentials. }\label{Fig.0}
	\end{figure}
	
	We illustrate these observed phenomena using the standard prototypical examples of one-dimensional symmetric and asymmetric double-well potentials: $V_{\text{symm}}(x) = x^4/4 - x^2/2$ and $V_{\text{asymm}}(x) = x^4/4 - x^3/3 - x^2/2$ (see Figure \ref{Fig.0}). For both cases, we choose a quadratic attractive interaction potential $W(x) = x^2/2$ with interaction strength $\kappa=1$, and analyze the resulting phase transitions as the effective parameter $\lambda := -2\kappa/\sigma^2$ varies.
	
	We say that $\lambda_\star$ is a \emph{critical parameter} if new critical points of the free energy functional that are distinct from the original ones emerge as $\lambda$ crosses $\lambda_\star$. The phase transition is said to be \emph{continuous} if these new critical points converge to the original one in an appropriate topology as $\lambda \to \lambda_\star$, a scenario that typically corresponds to a ferromagnetic phase transition. Otherwise, the phase transition is termed \emph{discontinuous}, which frequently characterizes a liquid-vapor phase transition. This definition follows the conventions of \cite{CGPS20, GGH25}. From the perspective of bifurcation theory, such a phase transition point corresponds precisely to a bifurcation point. A formal definition from this bifurcation viewpoint is provided in Section \ref{the_main_results}.
	
	The main numerical observations are summarized below (see also the bifurcation diagram, Figure \ref{Fig.1}, for mean values of invariant measures as \hl{$\lambda$} changes)
	\begin{enumerate}
		\item For the symmetric double-well potential, there is always a family of zero-mean trivial invariant measures. Two mutually mirrored invariant measures bifurcate continuously from the trivial solution below $\lambda_\star$.
		\item For the asymmetric double-well potential, there is always an invariant measure concentrated mainly on the deeper well. An invariant measure suddenly emerges in the shallower well at the critical parameter $\lambda_\star$, and then splits into two when $\lambda < \lambda_\star$. 
	\end{enumerate}
	Note that discontinuous phase-transition phenomena in multi-well potentials have been numerically observed in \cite{GKKP19}. As a simplification of the multi-well model, the asymmetric double-well model is adopted here, while the phase transition in the former can be addressed similarly.
	
	\begin{figure}[htb]
		\centering
		\subfigure[]{
			\includegraphics[scale=0.46]{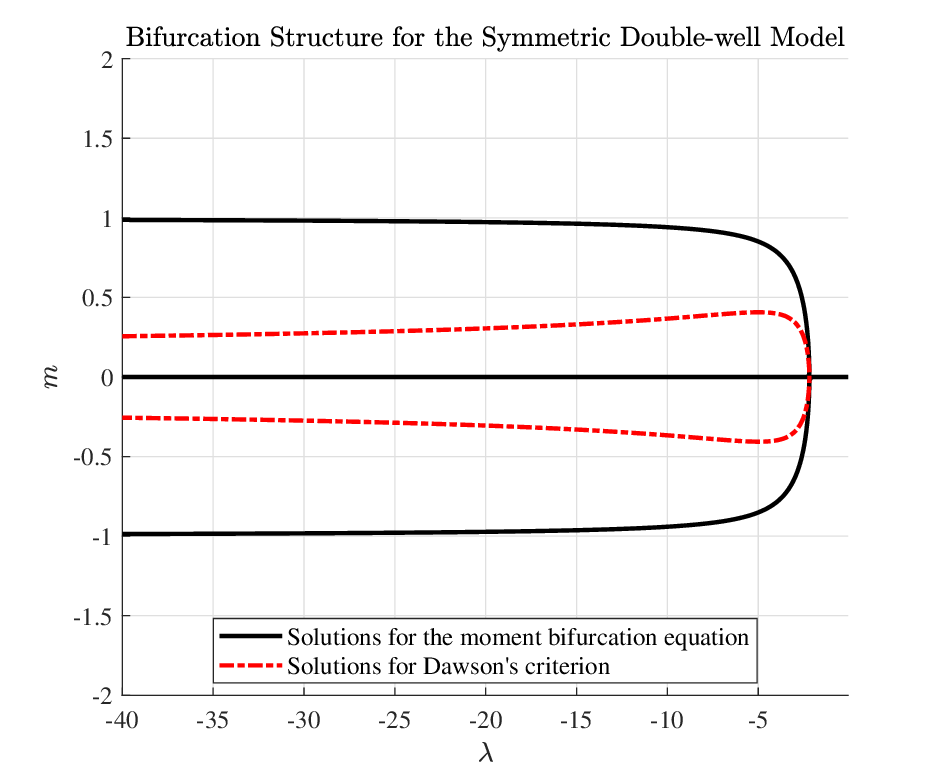}
			\label{Fig.1a}}
		\subfigure[]{
			\includegraphics[scale=0.46]{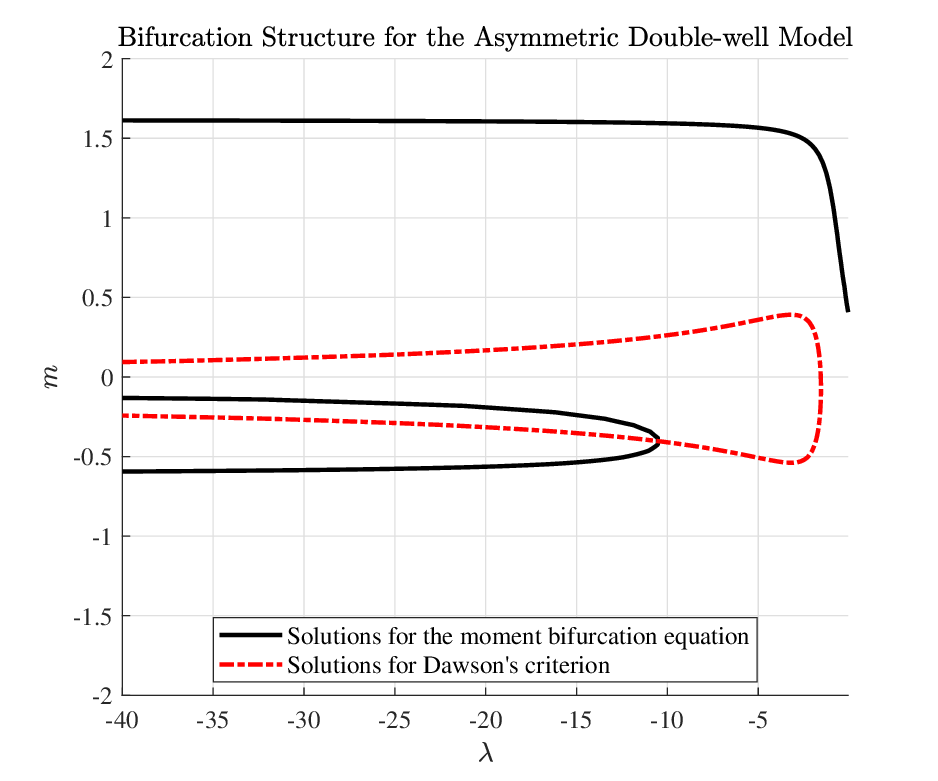}
			\label{Fig.1b}}
		\caption{Bifurcation diagrams for mean values of multiple invariant measures obtained by solving the moment bifurcation equation for symmetric and asymmetric double-well potentials: $V_{sym} = x^4/4 - x^2/2,\ V_{asym} = x^4/4 - x^3/3 - x^2/2 $, $\lambda = -2\kappa/\sigma^2$. The intersection point of solutions of the bifurcation equation (\ref{eq.16}) (black line) and Dawson's criterion (\ref{eq.102}) (red dashed line) is exactly the (continuous or discontinuous) phase transition point. }\label{Fig.1}
	\end{figure}

	\subsection{Limitations and contributions}
	Limitations. Since the Crandall-Rabinowitz bifurcation theory only treats the $1$-dimensional null space case, we do not currently handle the higher-dimensional null-space case, for example, when $V\ebra{x_1, x_2} = (x_1^2+x_2^2)^2/4 - (x_1^2 + x_2^2)/2$. Furthermore, for general interactions, the spectral condition remains abstract and need not reduce to a finite-dimensional covariance criterion; accordingly, the Gaussian interaction example is treated mainly as a numerical illustration. 
	
	Contributions. In this work, we establish abstract pitchfork and saddle-node bifurcation theorems that characterize continuous and discontinuous phase transitions, respectively (see Theorems \ref{thm.7}, \ref{thm.8}, and \ref{thm.9}). Unlike the torus case, where the homogeneous branch and Fourier modes are fixed, the whole-space problem involves $\lambda$-dependent Gibbs profiles, making the verification of spectral and bifurcation conditions substantially more delicate. Utilizing this bifurcation framework, we extend Dawson's classical criterion from the one-dimensional setting to higher dimensions. Furthermore, we investigate several representative applications, including one-dimensional symmetric and asymmetric double-well models, as well as a two-dimensional model that exhibits multiple phase transitions (see Section \ref{exs}). Moreover, a monotonicity inequality for the moment with respect to temperature is also established, which is of independent interest. 
	
	The remainder of this paper is organized as follows. Section \ref{the_main_results} delineates the abstract bifurcation theorems and details our core examples of continuous and discontinuous phase transitions. The proofs of these main theoretical results are presented in Section \ref{sec.proof_thms}. Lastly, Section \ref{ex_proof} is dedicated to the technical proofs and derivations for the concrete examples.
	
	\section{Preliminaries}
	We first state the definitions of continuous and discontinuous phase transitions. A solution $\ebra{\lambda_\star, \rho_\star}$ of the bifurcation equation is called a \textit{phase-transition point} (or bifurcation point) if solutions emerge or vanish near $\rho_\star$ as $\lambda$ crosses $\lambda_\star$. If there exists a family of trivial solutions $\{\rho_0\ebra{\lambda}\}$ passing through the phase-transition point under consideration, and non-trivial solution curves exist in arbitrarily small neighborhoods of $\ebra{\lambda_\star, \rho_\star}$, we say the system exhibits a \textit{continuous phase transition}. If no pre-existing branch of solutions passes through the phase-transition point, we say that the system undergoes \textit{a discontinuous phase transition}. See also Section \ref{obser} for an alternative definition. Conceptually similar but not identical definitions designed for compact manifolds can be found in \cite{MR2594901, CGPS20}. A \textit{pitchfork bifurcation} emerges at a continuous phase transition point when conditions (\ref{eq.33}), (\ref{eq.35.2}), and (\ref{eq.36.1}) hold. A \textit{saddle-node bifurcation} takes place at a discontinuous phase transition point whenever equations (\ref{eq.48}) and (\ref{eq.51.1}) are satisfied.

	\subsection{Notations}
	\begin{enumerate}
		\item $\mathcal{P}\ebra{\mathbb{R}^n}$: the space of probability measures on $\mathbb{R}^n$, $\norm{\mu}_p^p := \int_{\mathbb{R}^n} |x|^p \mu\ebra{\text{d}x}$.
		\item $\mathcal{P}_p\ebra{\mathbb{R}^n} := \{\mu \in \mathcal{P}\ebra{\mathbb{R}^n}: \norm{\mu}_p  < \infty\}$, $\mathcal{P}_p^{ac}\ebra{\mathbb{R}^n} := \{\mu \in \mathcal{P}_p\ebra{\mathbb{R}^n}: \mu \ll \mathcal{L}^n\}$ where $\mathcal{L}^n$ denotes the Lebesgue measure on $\mathbb R^n$.
		\item $B_R\ebra{0} := \left\{x \in \mathbb{R}^n:|x| \leq R\right\}$, $\tau_h f := f\ebra{\cdot - h}$, $\chi_A$: the indicator function of the set $A$.
		\item Moments: $m_0 \ebra{f} := \int_{\mathbb{R}^n} 1 f\ebra{\text{d}x}$, $m_1 \ebra{f} := \int_{\mathbb{R}^n} x f\ebra{\text{d}x}$, $m_2 \ebra{f} := \int_{\mathbb{R}^n} |x|^2 f\ebra{\text{d}x}$. For a multi-index $\alpha$, the corresponding moment is $m_\alpha\ebra{f} = \int_{\mathbb{R}^n} \ebra{\prod_{i = 1}^n x_{i}^{\alpha_i}}  f \ebra{\text{d} x},$ where $\alpha = \ebra{\alpha_1, \dots, \alpha_n}$ is a multi-index.
		\item $K^*$: adjoint operator of $K$; when $K$ is compact with finitely many eigenvalues, we denote its eigenvalues by $\tilde{\theta}_1,\dots,\tilde{\theta}_n$ and its characteristic values by $\lambda_i:=\tilde{\theta}_i^{-1}$ for $i=1,\dots,n$.
		\item $\delta_{ij}$: the Kronecker symbol for indices $i$ and $j$.
		\item $\eangle{f}_{\rho} := \int_{\mathbb{R}^n} f \cdot \rho \dif x$.
	\end{enumerate}
	
	\subsection{Assumptions} Consider the MVSDE \eqref{eq.MVSDE} and the corresponding nonlinear Fokker-Planck equation \eqref{eq.1}. We assume that both the external potential $V$ and the interaction potential $W$ are $C^1$ and satisfy the following conditions (H1), (H2), and (H3).
	
	\begin{itemize}
		\item[(H1)] (One-sided Lipschitz) For some $L \in \mathbb{R}$, $\mu, \tilde{\mu} \in \mathcal{P}_p\ebra{\mathbb{R}^n}$ ($p \geq 2$),
		\begin{equation*}\label{eq.4}
			\begin{aligned}
				\eangle{x-y, \ebra{-\nabla V}\ebra{x} - \ebra{-\nabla V}\ebra{y}} & \leq L \cdot |x-y|^2, \\
				\eangle{x-y, \ebra{- \nabla W * \mu}\ebra{x} - \ebra{- \nabla W * \tilde{\mu}}\ebra{y}} & \leq L \cdot \ebra{|x-y|^2 + W_2\ebra{\mu, \tilde{\mu}}^2}.
			\end{aligned}
		\end{equation*}
		\item[(H2)] (Growth controlled by polynomial) For some $C > 0$ and $p \in \mathbb{N}$,
		\begin{equation*}\label{eq.5}
			\begin{aligned}
				|\nabla V(x)| & \leq C \cdot \ebra{1 + |x|^p}, \\
				|(\nabla W * \mu)(x)| & \leq C \cdot \ebra{1 + |x|^p + \norm{\mu}_p^p}.
			\end{aligned}
		\end{equation*}
		\item[(H3)] (Weak hyper-dissipativity) For some $\alpha, \beta, \delta > 0$ and $\alpha^\prime > \gamma^\prime > 0$, $\beta^\prime > 0$,
		\begin{equation*}\label{eq.3}
			\begin{aligned}
				\eangle{x, -\nabla V (x)} & \leq -\alpha \cdot |x|^{2+\delta} + \beta, \\
				\eangle{x, -\nabla V \ebra{x} - \kappa \cdot (\nabla W * \mu)(x)} & \leq -\alpha^\prime \cdot |x|^2 + \gamma^\prime \cdot \norm{\mu}_2^2 + \beta^\prime .
			\end{aligned}
		\end{equation*}
	\end{itemize}
	
	Under assumptions (H1), (H2) and (H3), equations \eqref{eq.MVSDE} and \eqref{eq.1} are well-posed;
	see, e.g., \cite[Theorem 2.1]{HHL24}.

	\subsection{Characterization of invariant measures}
	Preliminary material about bifurcation theorems can be found in \cite{Dei85, Kie12, BT03}.
	
	Invariant measures of mean-field equations can be viewed as solutions of fixed-point equations. We define the \textit{Gibbs map }$\mathcal{T}$, which sends a given measure $\rho$ to the invariant measure of the following decoupled classical Fokker-Planck equation
	\begin{equation}\label{eq.13}
		\partial_t \bar{\mu}_t = \frac{\sigma^2}{2} \Delta \bar{\mu}_t + \text{div} \ebra{\nabla V \bar{\mu}_t} + \text{div} \ebra{\kappa \cdot \ebra{\nabla W * \rho} \bar{\mu}_t}.
	\end{equation}
	The map $\mathcal{T}$ is well-defined once the existence and uniqueness of the invariant measure of the classical Fokker-Planck equation $\ebra{\ref{eq.13}}$ hold. The invariant measures of the MVSDE are characterized by three statements (\ref{eq.15}), (\ref{eq.16}), and (\ref{eq.mom_eq}) below.
	
	\medskip
	
	\textit{1. The bifurcation equation.} Invariant measures are fixed points of $\mathcal{T}$ defined on $X$, an appropriate Banach space containing all invariant measures under investigation. If $\kappa$ is fixed and $\sigma$ is allowed to vary, so that the temperature is free to change, then the phase transition is essentially controlled by the parameter $\lambda = -2\kappa/\sigma^2$. The bifurcation equation is given by
	\begin{equation}\label{eq.15}
		\begin{aligned}
			\text{(Bifurcation Equation)} \quad \quad & F(\lambda, \rho):= (id - \mathcal{T}\ebra{\lambda, \cdot}) \ebra{\rho} = 0, \\
			& \mathcal{T}\ebra{\lambda, \cdot}: \ \rho \mapsto \frac{1}{Z\ebra{\lambda, \rho}} \cdot \exp\ebra{\lambda \cdot \ebra{V\kappa^{-1}  + W * \rho}}. \qquad\qquad
		\end{aligned}
	\end{equation}
	
	Denote by $L^1 \ebra{\mathbb{R}^n, \Lambda}$ the weighted $L^1$ space equipped with the norm
	\begin{equation*}\label{eq.14}
		\norm{\cdot}_{L^1\ebra{\mathbb{R}^n,\ \Lambda}} := \int_{\mathbb{R}^n} |\cdot|\ \Lambda \dif x.
	\end{equation*}
	When $W = |x|^2/2$, we typically choose $X = L^1\ebra{\mathbb{R}^n, 1 + |x|^2}$. Note that $\mathcal{P}_2^{ac}\ebra{\mathbb{R}^n} \hookrightarrow L^1\ebra{\mathbb{R}^n, 1 + |x|^2}$ by identifying measures with their density functions.
	
	\begin{remark}\label{rem.3}
		Searching for fixed points of $\mathcal{T}$ on the weighted space $L^1\ebra{\mathbb{R}^n, 1 + |x|^2}$ is equivalent to searching for them on the probability measure space $\mathcal{P}^{ac}_2\ebra{\mathbb{R}^n}$. On the one hand, any invariant measure on $\mathcal{P}^{ac}_2\ebra{\mathbb{R}^n}$ belongs to $L^1\ebra{\mathbb{R}^n, 1 + |x|^2}$ by the embedding. On the other hand, under assumptions (H1), (H3), images of $\mathcal{T}$ are probability measures with finite second moments (by the coercivity estimate for $V$, see Lemma \ref{lem.1}), so its fixed points must belong to $\mathcal{P}^{ac}_2\ebra{\mathbb{R}^n} \cap L^1\ebra{\mathbb{R}^n, 1 + |x|^2}$.
	\end{remark}

	\textit{2. The moment bifurcation equation when $W = |x|^2/2$.} Under a quadratic interaction, $\mathcal{T}\ebra{\lambda, \rho}$ is determined once $m_1 \ebra{\rho}$ is known. Indeed, $m_0\ebra{\rho}$ can be replaced by $1$ since we are searching for solutions among probability measures. The second-order moment term is independent of $x$, and is therefore eliminated after normalization by $\tilde{Z}$. Hence this equation depends only on $m_1\ebra{\rho}$. Therefore, \eqref{eq.15} is equivalent to $m_1\ebra{\ebra{id - \mathcal{T}\ebra{\lambda, \cdot}}\ebra{\rho}} = 0$, which is
	\begin{equation}\label{eq.16}
		\begin{aligned}
			\text{(Moment Bifurcation Equation) }  &  m_1 - \int_{\mathbb{R}^n } x \cdot \tilde{\mathcal{T}}\ebra{\lambda, m_1}\dif x = 0, \\
			& \tilde{\mathcal{T}}\ebra{\lambda, m_1}
			:= \frac{1}{\tilde{Z}\ebra{\lambda, m_1}} \cdot \exp\ebra{\lambda\cdot \ebra{V \kappa^{-1} + \frac{1}{2}|x|^2 - \eangle{m_1, x}}}.
		\end{aligned}
	\end{equation}

	\medskip
	
	\textit{3. The moment equation and the Hankel inequality when $n=1$, $V$ is a polynomial of degree greater than 2 and $W = |x|^2/2$.} For a polynomial potential $V$, applying It\^o's formula yields the moment equation with respect to time $t$
	\begin{equation*}\label{eq.16.1}
		\frac{\text{d}}{\text{d} t} m_p\ebra{\mu_t} = p\cdot \ebra{\frac{\sigma^2}{2}\cdot \ebra{p-1} m_{p-2} - \int_{\mathbb{R}} x^{p-1} V^\prime \cdot\mu_t \dif x - \kappa\cdot \ebra{m_p - m_1m_{p-1}}}\ebra{\mu_t},
	\end{equation*}
	where the $p$-th moment is defined as $m_p\ebra{\rho} := \int_{\mathbb{R}} x^p \cdot \rho\ebra{x} \dif x$ for $p \geq 2$.
	
	In this case, by \cite[Section 3.2.1]{Daw83}, the invariant measure can also be characterized by the following equivalent condition (when $n > 1$, it is necessary but not sufficient)
	\begin{equation}\label{eq.mom_eq}
		\begin{aligned}
			& \rho_0 \text{ is a probability measure generated by a sequence of moments } \{m_p\}_{p \in \mathbb{N}}, \text{where} \\
			& \{m_p\}_{p \in \mathbb{N}} \text{ is determined by the moment equations and Hankel inequalities given below. } \\
		\end{aligned}
	\end{equation}
	Here,
	\begin{equation}\label{eq.mom_eq.1}
		\begin{aligned}
			& \text{(Moment Equations) } \qquad\qquad\quad\  m_0\ebra{V^\prime \rho_0} = 0, \qquad\qquad\qquad\quad\ \ \ebra{p=1}\\
			& \sigma^2/2\cdot \ebra{p-1} m_{p-2} - m_0\ebra{x^{p-1} V^\prime \rho_0} - \kappa\cdot \ebra{m_p - m_1 m_{p-1}} = 0, \ \ \ \ \ \ebra{p \geq 2}\\
			& \text{(Hankel inequalities) } \Delta_p = \det
			\begin{pmatrix}
				m_0 & m_1 & \cdots & m_p\\
				m_1 & m_2 & \cdots & m_{p+1}\\
				\vdots & \vdots & \vdots & \vdots\\
				m_p & m_{p+1} & \cdots & m_{2p}
			\end{pmatrix} \geq 0 \quad \ \ \  (p \geq 0).
		\end{aligned}
	\end{equation}
	Note that $m_0\ebra{x^{p-1} V^\prime \rho_0}$ can be expressed solely in terms of $\{m_p\}_{p \in \mathbb{N}}$ since $V$ is a polynomial.
	
	If we further assume that $V$ is symmetric ($V\ebra{\cdot} = V\ebra{-\cdot}$) and $W = |x|^2/2$, it follows immediately that probability measures $\rho_0$ such that $m_1\ebra{\rho_0} = 0$ form a family of symmetric trivial solutions of (\ref{eq.16}), parameterized by $\sigma$
	\begin{equation*}\label{eq.17}
		\rho_0\ebra{\sigma} = \frac{1}{\tilde{Z}\ebra{\kappa, \sigma}}\cdot \exp \ebra{\ebra{-V - \kappa \cdot \frac{1}{2}|x|^2}\cdot \frac{2}{\sigma^2}},
	\end{equation*}
	or, equivalently, parameterized by $\lambda = -2\kappa/\sigma^2$
	\begin{equation*}\label{eq.17.1}
		\rho_0(\lambda)=\frac{1}{\tilde{Z}\ebra{\lambda}}\cdot \exp\ebra{\lambda \cdot \ebra{V\kappa^{-1} + \frac{1}{2}|x|^2}},
	\end{equation*}
	with $\tilde{Z}\ebra{\kappa, \sigma}$ and $\tilde{Z}\ebra{\lambda}$ denoting the normalizing constants.
	In this case, non-trivial solutions of the bifurcation equation appear in pairs.
	
	Assume the existence of a family of trivial solutions $\{\rho_0\ebra{\lambda}\}$. After applying the following transformation,
	\begin{equation*}
		\tilde{F} \ebra{\lambda, \delta \rho} := F\ebra{\lambda, \rho_0\ebra{\lambda} + \delta \rho} = \left.\ebra{id - \mathcal{T}\ebra{\lambda, \cdot}}\ebra{\rho}\right|_{\rho = \rho_0\ebra{\lambda} + \delta \rho}.
	\end{equation*}
	$\tilde{F}$ depends on $\lambda$ only through the first variable, which is a prerequisite for the pitchfork bifurcation theorem. 
	
	\section{Main Theorems}\label{the_main_results}
	\subsection{The abstract theorem for continuous phase transitions}
	For a general interaction  $W$, we have the following abstract bifurcation theorem on a Banach space $X$ within the Crandall--Rabinowitz framework \hl{(see Appendix {\ref{bifur}})}, derived with respect to the parameter $\lambda = -2\kappa/\sigma^2$.

	Let $(\lambda_\star, \rho_0(\lambda_\star))$ be the phase transition point under consideration. Given $\delta_1, \delta_2 > 0$, set $J := B_{\delta_1}(\lambda_\star)$ and let $\{\rho_0(\lambda)\}_{\lambda \in J}$ be a family of trivial solutions. Define the local neighborhood of the phase transition point by $\Omega:= \cup_{\lambda \in J} \{\lambda\} \times B_{\delta_2}\ebra{\rho_0 \ebra{\lambda}}$. Then the bifurcation equation (\ref{eq.15}) after a transformation is given by
	\begin{equation}\label{eq.91}
		\begin{aligned}
			0 & = \tilde{F}\ebra{\lambda, \delta\rho} = \ebra{id - \mathcal{T}\ebra{\lambda, \cdot}}\ebra{\rho_0\ebra{\lambda} + \delta\rho} = \ebra{id - \lambda K} \ebra{\delta \rho} + G\ebra{\lambda, \delta \rho},
		\end{aligned}
	\end{equation}
	where, for any $\omega \in X$,
	$$K\omega := \rho_0(\lambda) \cdot \ebra{W * \omega - \int_{\mathbb{R}^n} W * \omega \cdot \rho_0(\lambda) \dif x}$$
	is defined so that $\lambda K$ is the linear part of $\mathcal{T}$, and $G$ is the remainder in $\tilde{F}$. Note that in the continuous phase transition case, $K$ implicitly depends on $\lambda$ via $\rho_0(\lambda)$. We shall explicitly write out $K = K_\lambda$ whenever required to avoid ambiguity.
	
	\begin{theorem}[Abstract pitchfork bifurcation theorem for continuous phase transitions]\label{thm.7}
		Consider the equation \eqref{eq.1} and its bifurcation equation \eqref{eq.91}. Assume that (H1)--(H3) hold, and $\tilde{F}$ has Fr\'{e}chet derivatives up to order three. Then 

		(I) If $\ebra{\lambda_\star, \rho_0\ebra{\lambda_\star}}$ is a phase transition point, then
		\begin{equation}\label{eq.93}
			\partial_{\delta\rho} \tilde{F} \ebra{\lambda_\star, 0} \text{ is not invertible}.
		\end{equation}
		
		(II) In addition to \eqref{eq.93}, assume further that $id - \lambda_\star K_{\lambda_\star}$ has Fredholm index zero with $\text{codim range}$\newline$(id - \lambda_\star K_{\lambda_\star}) = 1$, and $\lambda_\star$ is semi-simple with $\text{null }\ebra{id - \lambda_\star K_{\lambda_\star}} = \text{span}\{\tilde{v}_\star\}$ for some $\tilde{v}_\star$. Additionally, suppose that $K_\lambda$ is a compact operator on $X$ for any fixed $\lambda \in J$, and $G$ is completely continuous with $G\ebra{\lambda, \cdot} = o\ebra{\norm{\delta\rho}}$ uniformly with respect to $\lambda \in J$.
		Let $P_{\lambda_\star}$ be the spectral projection operator onto $\text{null }\ebra{id - \lambda_\star K_{\lambda_\star}}$. We use $[\cdot]$ to denote the action of a multilinear operator on direction vectors. Assume also that
		\begin{enumerate}
			\item (The transversality condition) $P_{\lambda_\star} \partial_{\lambda, \delta \rho}^2 \tilde{F}\ebra{\lambda_\star, 0}[\tilde{v}_\star] \neq 0$, that is
			\begin{equation}\label{eq.95}
				P_{\lambda_\star}\left(-\rho_0\cdot \left(\lambda_\star^{-1}\cdot \frac{\tilde{v}_\star}{\rho_0} + \frac{\tilde{v}_\star}{\rho_0} \cdot \frac{\partial_\lambda \rho_0}{\rho_0} - \int_{\mathbb{R}^n} \frac{\tilde{v}_\star}{\rho_0} \cdot \frac{\partial_\lambda \rho_0}{\rho_0} \cdot \rho_0 \dif x\right)\right)  \neq 0,
			\end{equation}
			\item (The pitchfork condition) $P_{\lambda_\star} \partial_{\delta\rho, \delta\rho}^2 \tilde{F}\ebra{\lambda_\star, 0} [\tilde{v}_\star, \tilde{v}_\star] = 0$, that is
			\begin{equation}\label{eq.96}
				P_{\lambda_\star}\left(-\rho_0\cdot\left(\ebra{\frac{\tilde{v}_\star}{\rho_0}}^2 - \int_{\mathbb{R}^n} \ebra{\frac{\tilde{v}_\star}{\rho_0}}^2\cdot \rho_0 \dif x\right)\right) = 0,
			\end{equation}
			\item (The non-degenerate condition)
			\begin{equation}\label{eq.97}
				\begin{aligned}
					& P_{\lambda_\star}\ebra{1/3\cdot \partial_{\delta \rho, \delta \rho, \delta \rho}^3 \tilde{F}\ebra{\lambda_\star, 0}[\tilde{v}_\star, \tilde{v}_\star, \tilde{v}_\star] + \partial_{\delta\rho, \delta\rho}^2\tilde{F}\ebra{\lambda_\star, 0}[\tilde{v}_\star, 2 \partial_s w\ebra{\lambda_\star, 0}]} \neq 0, \\
					& 2\partial_s w\ebra{\lambda_\star, 0} = -\ebra{\partial_{\delta \rho}\tilde{F}\ebra{\lambda_\star, 0}|_{\text{range } (id - \lambda_\star K_{\lambda_\star})}}^{-1} \ebra{id - P_{\lambda_\star}}\partial_{\delta \rho, \delta \rho}^2 \tilde{F}\ebra{\lambda_\star, 0}[\tilde{v}_\star, \tilde{v}_\star],
				\end{aligned}
			\end{equation}
			where
			\begin{equation}\label{eq.98}
				\begin{aligned}
					& \partial_{\delta\rho, \delta\rho, \delta\rho}^3 \tilde{F}\ebra{\lambda_\star, 0}[\tilde{v}_\star, \tilde{v}_\star, \tilde{v}_\star]
					\\
					& = -\rho_0\cdot \left(\ebra{\frac{\tilde{v}_\star}{\rho_0}}^3 - \int_{\mathbb{R}^n} \ebra{\frac{\tilde{v}_\star}{\rho_0}}^3 \cdot \rho_0 \dif x - 3\cdot \frac{\tilde{v}_\star}{\rho_0}\cdot \int_{\mathbb{R}^n} \ebra{\frac{\tilde{v}_\star}{\rho_0}}^2 \cdot \rho_0 \dif x \right), \\
					& \partial_{\delta \rho, \delta \rho}^2 \tilde{F}\ebra{\lambda_\star, 0}[\tilde{v}_\star, \omega] = -\lambda_\star \cdot \rho_0 \cdot \left(\frac{\tilde{v}_\star}{\rho_0} \cdot \frac{K\omega}{\rho_0} - \int_{\mathbb{R}^n} \frac{\tilde{v}_\star}{\rho_0} \cdot \frac{K \omega}{\rho_0} \cdot \rho_0 \dif x\right).
				\end{aligned}
			\end{equation}
		\end{enumerate}
		Then the model \eqref{eq.1} undergoes a continuous phase transition governed by a pitchfork bifurcation at $\ebra{\lambda_\star, \rho_0\ebra{\lambda_\star}}$.
		
		Moreover, locally, the non-trivial solution curve is represented as
		\begin{equation}\label{eq.99}
			\{\ebra{\lambda\ebra{s}, \rho_0\ebra{\lambda_\star} + s \tilde{v}_\star + s w\ebra{s}} \in \Omega: \ \lambda \ebra{0} = \lambda_\star, \ \lambda^\prime\ebra{0}= 0, \ \lambda^{\prime\prime}\ebra{0} \neq 0, \ sw\ebra{s} = o\ebra{s}\}.
		\end{equation}
		The bifurcation equation can be expanded as the normal form for a pitchfork bifurcation perturbed by higher-order terms
		\begin{equation*}\label{eq.100}
			\begin{aligned}
				& \left(P_{\lambda_\star}\partial_{\lambda, \delta \rho}^2 \tilde{F}\ebra{\lambda_\star, 0}[\tilde{v}_\star]\right) \cdot \ebra{\lambda - \lambda_\star} s + \left(1/2\cdot  P_{\lambda_\star}\partial_{\delta \rho, \delta \rho}^2\tilde{F}\ebra{\lambda_\star, 0}[\tilde{v}_\star, 2 \partial_s \omega\ebra{\lambda_\star, 0}]\right.\\
				& \left. + 1/6\cdot P_{\lambda_\star}\partial_{\delta \rho, \delta \rho, \delta\rho}^3 \tilde{F}\ebra{\lambda_\star, 0}[\tilde{v}_\star, \tilde{v}_\star, \tilde{v}_\star]\right)\cdot s^3 + \cdots = 0,
			\end{aligned}
		\end{equation*}
		which enables us to determine the direction of bifurcation.
		
		Globally, the nontrivial solution component $C$ passing through the phase transition point is either unbounded in $\mathbb{R} \times X$, or is connected to another $(\lambda^\prime, \rho(\lambda^\prime))$ such that $(id - \lambda^\prime K_{\lambda^\prime})\tilde{v}^\prime = 0$ for some $\tilde{v}^\prime \in X$. In addition, $C$ consists of two subcontinua $C^+$ and $C^-$ such that $C^+ \cap C^- \cap B_\epsilon\ebra{\lambda_\star, \rho_0\ebra{\lambda_\star}} = \{\ebra{\lambda_\star, \rho_0\ebra{\lambda_\star}}\}$ and $C^{+, -} \cap \partial B_\epsilon\ebra{\lambda_\star, \rho_0\ebra{\lambda_\star}} \neq \emptyset$ for all sufficiently small $\epsilon > 0$.
	\end{theorem}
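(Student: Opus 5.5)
Part (I) is a contrapositive use of the implicit function theorem. If $\partial_{\delta\rho}\tilde F(\lambda_\star,0)=id-\lambda_\star K_{\lambda_\star}$ were an isomorphism of $X$, then, since $\tilde F$ is $C^1$ near $(\lambda_\star,0)$ and $\tilde F(\lambda,0)=0$ for all $\lambda\in J$, the implicit function theorem would give a neighborhood of $(\lambda_\star,0)$ in which the zeros of $\tilde F$ are exactly $\{(\lambda,0)\}$. In terms of $F$, the only solutions near $(\lambda_\star,\rho_0(\lambda_\star))$ would be the trivial branch. This contradicts the definition of a phase transition point, so \eqref{eq.93} follows.

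For (II), the plan is to check the hypotheses of the Crandall--Rabinowitz theorem from Appendix \ref{bifur} and then read off the local and global statements.

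\textbf{Derivatives.} First I compute the derivatives of $\tilde F$ at $(\lambda_\star,0)$. Write $\rho=\rho_0(\lambda)+\delta\rho$ and $\mathcal T(\lambda,\rho)=\rho_0(\lambda)\,e^{\lambda W*\delta\rho}/\langle e^{\lambda W*\delta\rho}\rangle_{\rho_0(\lambda)}$, which uses the fixed-point identity for $\rho_0(\lambda)$. Expanding the exponential and the normalizer in powers of $u=\lambda W*\delta\rho$ gives the cumulant-type expansion
\[
\mathcal T=\rho_0\big(1+(u-\langle u\rangle)+\tfrac12\big((u-\langle u\rangle)^2-\mathrm{Var}(u)\big)+\cdots\big),
\]
with $\langle\cdot\rangle=\langle\cdot\rangle_{\rho_0}$. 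From this:
\begin{itemize}
\item The linear part is $\lambda K$.
\item Expressing $u$ through the kernel vector $\tilde v_\star$, where $\lambda_\star K\tilde v_\star=\tilde v_\star$ means $\lambda_\star(W*\tilde v_\star-\langle W*\tilde v_\star\rangle)=\tilde v_\star/\rho_0$, yields the second and third derivatives in \eqref{eq.96} and \eqref{eq.98}.
\item Differentiating $\lambda K_\lambda\tilde v_\star$ in $\lambda$, including the $\lambda$-dependence of $\rho_0(\lambda)$ through $\partial_\lambda\rho_0$, gives the mixed derivative appearing in \eqref{eq.95}.
\end{itemize}

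\textbf{Functional-analytic hypotheses.} Compactness of $K_\lambda$ and complete continuity of $G$ are assumed, so $id-\lambda_\star K_{\lambda_\star}$ is Fredholm of index zero with one-dimensional kernel. Semi-simplicity gives $X=\mathrm{span}\{\tilde v_\star\}\oplus\mathrm{range}(id-\lambda_\star K_{\lambda_\star})$, so $P_{\lambda_\star}$ is well defined. Condition \eqref{eq.95} is exactly the Crandall--Rabinowitz transversality condition $\partial^2_{\lambda,\delta\rho}\tilde F[\tilde v_\star]\notin\mathrm{range}$. The theorem then produces a $C^1$ curve $(\lambda(s),s\tilde v_\star+sw(s))$ with $w(0)=0$ and $w(s)\in\mathrm{range}$.

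\textbf{Pitchfork shape.} Substituting the curve into $\tilde F=0$ and applying $P_{\lambda_\star}$ gives a Lyapunov--Schmidt reduction. Taylor expanding in $s$ to third order produces the displayed normal form. The $s^2$ coefficient, which determines $\lambda'(0)$, vanishes by \eqref{eq.96}, so $\lambda'(0)=0$. The correction $2\partial_sw(\lambda_\star,0)$ comes from the range component of the second-order equation; it is obtained by inverting $\partial_{\delta\rho}\tilde F$ restricted to the range. Dividing the reduced equation by $s^3$ gives
\[
\lambda''(0)=-\frac{2\big(\text{cubic coefficient}\big)}{\text{transversality coefficient}},
\]
and this is nonzero by \eqref{eq.95} and \eqref{eq.97}. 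This yields \eqref{eq.99}.

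\textbf{Global alternative.} The global statement follows from Rabinowitz's global bifurcation theorem applied to $\delta\rho=\lambda K_\lambda\delta\rho-G$. Its hypotheses hold because $\lambda_\star$ is a characteristic value of odd (here one) algebraic multiplicity by semi-simplicity, and the operators are compact with $G=o(\|\delta\rho\|)$ uniformly in $\lambda$. The splitting into $C^\pm$ is the standard refinement (Dancer/Rabinowitz), using the local curve parametrized by $s>0$ and $s<0$.

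\textbf{Main obstacle.} The delicate step is that $K_\lambda$ depends on $\lambda$ through $\rho_0(\lambda)$, whereas Rabinowitz's theorem is stated for $\lambda K$ with $K$ fixed. I would first try to absorb $\lambda K_\lambda-\lambda K_{\lambda_\star}$ into the compact nonlinearity and use a degree-change argument: the Leray--Schauder index of $id-\lambda K_\lambda$ flips sign across $\lambda_\star$. This flip holds because transversality makes the single eigenvalue of $\lambda K_\lambda$ near $1$ cross $1$ with nonzero speed. This is what would justify the global alternative in the form stated.
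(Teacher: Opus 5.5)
Your proposal is correct and follows essentially the same route as the paper. The paper simply invokes the Crandall--Rabinowitz theorem (Theorem \ref{thm.5}) with the derivative formulas of Lemmas \ref{lemma.1} and \ref{lemma.1.1}, obtaining \eqref{eq.95}--\eqref{eq.98} by substituting $\lambda_\star K_{\lambda_\star}\tilde v_\star=\tilde v_\star$; your cumulant expansion of $\rho_0 e^{u}/\langle e^{u}\rangle_{\rho_0}$ and direct differentiation of $\lambda K_\lambda$ reproduce exactly these formulas, while your implicit-function argument for (I) and your index-jump treatment of the $\lambda$-dependent $K_\lambda$ in the global alternative (via the critical eigenvalue of $\lambda K_\lambda$ crossing $1$ with nonzero speed) spell out points the paper leaves to the cited abstract theorem.
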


	Different interaction kernels may lead to completely distinct behaviors (for example, attractive versus repulsive kernels). We are mainly interested in the phase transition driven by competition between noise and attractive interaction. Hence, we further introduce condition (H4), under which the spectrum of the linear part can be explicitly expressed. This condition is also adopted in \cite{Daw83, LcP20, LcP25, MQT16}. 
	\begin{itemize}
		\item[(H4)] (Attractive symmetric quadratic interaction) $W\ebra{x} = |x|^2/2$.
	\end{itemize}
	
	
	Under (H4), the divergence part induced by interaction becomes
	\begin{equation*}\label{eq.12}
		\text{div}\ebra{\kappa \cdot \ebra{x - m_1\ebra{\mu}}\mu},
	\end{equation*}
	where $m_1\ebra{\mu} = \int_{\mathbb{R}^n} x \mu\ebra{\text{d}x}$ denotes the mean value, or barycenter, of the distribution $\mu$. We refer to the effect of $W$ as ``attraction to the barycenter".
	
	Under (H4), the bifurcation theorem in $\mathbb{R}^n$ possesses a specific and delicate structure. Let us consider the ``covariance matrix" generated by $\rho_0$,
	\begin{equation*}\label{eq.71.2.1}
		\text{Cov}\ebra{\rho_0} := \begin{pmatrix}
			m_{e_1+e_1} - m_{e_1}\cdot m_{e_1} & \cdots & m_{e_n + e_1} - m_{e_n} \cdot m_{e_1}\\
			\vdots & \cdots & \vdots\\
			m_{e_1 + e_n} - m_{e_1} \cdot m_{e_n} & \cdots & m_{e_n + e_n} - m_{e_n} \cdot m_{e_n}
		\end{pmatrix}_{n \times n}\ebra{\rho_0}, 
	\end{equation*}
	where $e_i$ denotes the $i$-th unit vector in $\mathbb{R}^n$, and for $\alpha = \ebra{\alpha_1, \dots, \alpha_n}$, the indexed moments are defined as
	\begin{equation*}\label{eq.101}
		m_\alpha\ebra{\rho_0} := \int_{\mathbb{R}^n} \ebra{\prod_{i = 1}^n x_{i}^{\alpha_i}} \cdot \rho_0 \dif x_1 \cdots \dif x_n.
	\end{equation*}
	Let eigenvalues and corresponding orthogonal unit eigenvectors of the covariance matrix be: $0 < \theta_1 \leq \dots \leq \theta_n$; $v_1, \dots, v_n$, respectively. Choose $X = L^1\ebra{\mathbb{R}^n, 1 + |x|^2}$. Notation $\ebra{v_i}_j$ denotes the $j$-th component of the vector $v_i$. For $i \in \{1, \dots, n\}$, we define:
	\begin{equation*}\label{eq.73.1.1}
		\begin{aligned}
			& \tilde{v}_i := \sqrt{2} \pi^{-n/4}\cdot \rho_0 \cdot \sum_{k=1}^n \ebra{x_k - m_{e_k}} \cdot \ebra{v_i}_k, \\
			& \xi^{(i)}_k \cdot \ebra{v_i}_k:= \frac{\lambda_i}{2} \cdot \sum_{r=1}^n \ebra{m_{2e_k + e_r} - m_{2e_k} \cdot m_{e_r}} \ebra{\rho_0}\cdot \ebra{v_i}_r,\\
			& \tilde{u}_i := \sum_{k=1}^n \ebra{x_k +\xi_k^{(i)}}\cdot \ebra{v_i}_k, \qquad \lambda_i := -\theta_i^{-1}.
		\end{aligned}
	\end{equation*}
	Then one can verify that $K \tilde{v}_i = \lambda_i^{-1} \tilde{v}_i$ and $K^* \tilde{u}_i = \lambda_i^{-1} \tilde{u}_i$. Here $K^*$ denotes the  adjoint of $K$. Moreover, the projection onto $\text{span}\{\tilde{v}_i\}$ is
	\begin{equation*}\label{eq.72.1.2}
		\frac{\eangle{\cdot, \tilde{u}_i}}{\eangle{\tilde{v}_i, \tilde{u}_i}} \cdot  \tilde{v}_i.
	\end{equation*}
	Details are given in Section \ref{aux_lems}. With the above-defined notations, we have the following bifurcation theorem for continuous phase transitions under  quadratic interaction.
	
	\begin{theorem}[Pitchfork bifurcation theorem for continuous phase transitions, quadratic-interaction case]\label{thm.8}
		Consider the equation \eqref{eq.1} and its bifurcation equation \eqref{eq.91}. Assume that (H1)--(H4) hold. \hl{Then $\tilde{F}$ has Fr\'{e}chet derivatives up to order three on $X = L^1\ebra{\mathbb{R}^n, 1 + |x|^2}$ and $K_\lambda$ is a compact operator on $L^1(\mathbb{R}^n, 1+|x|^2)$ for any given $\lambda \in J$. The completely continuous term $G\ebra{\lambda, \cdot} = o\ebra{\norm{\delta\rho}}$ is a small remainder uniformly in $\lambda \in J$. Moreover, }
		
		(I) If $\ebra{\lambda_\star, \rho_0\ebra{\lambda_\star}}$ is a  phase transition point, then (\ref{eq.93}) holds. Under (H4), (\ref{eq.93}) becomes the generalized Dawson's criterion
		\begin{equation}\label{eq.102}
			-\lambda_\star^{-1} \text{ belongs to eigenvalues of } \text{Cov}\ebra{\rho_0\ebra{\lambda_\star}}.
		\end{equation}
		
		(II) Besides (\ref{eq.102}), if we further assume $-\lambda_\star^{-1}$ is a simple eigenvalue of the covariance matrix with unit eigenvector $v_\star$, as well as
		\begin{enumerate}
			\item The transversality condition
			\begin{equation}\label{eq.103}
				\begin{aligned}
					P_{\lambda_\star} \partial_{\lambda, \delta\rho}^2 \tilde{F}\ebra{\lambda_\star, 0}[\tilde{v}_\star]/\tilde{v}_\star = & -\lambda_\star^{-1} + \lambda_\star \cdot \sum_{k, r = 1}^n \ebra{v_\star}_k\ebra{v_\star}_r \cdot \ebra{m_{e_k + e_r}\ebra{\partial_{\lambda} \rho_0} \\
						& - m_{e_k}\ebra{\rho_0} \cdot m_{e_r}\ebra{\partial_\lambda \rho_0} - m_{e_k} \ebra{\partial_\lambda \rho_0} \cdot m_{e_r}\ebra{\rho_0}} \neq 0.
				\end{aligned}
			\end{equation}
			\item The pitchfork condition
			\begin{equation}\label{eq.104}
				\begin{aligned}
					& P_{\lambda_\star} \partial^2_{\delta \rho, \delta \rho} \tilde{F}\ebra{\lambda_\star, 0}  [\tilde{v}_\star, \tilde{v}_\star]/\tilde{v}_\star = \sqrt{2} \pi^{-n/4}\cdot \lambda_\star \cdot \sum_{k, r, s = 1}^n \ebra{v_\star}_k \ebra{v_\star}_r \ebra{v_\star}_s \\
					& \qquad\qquad\qquad \cdot\int_{\mathbb{R}^n} \ebra{x_k - m_{e_k}\ebra{\rho_0}}\ebra{x_r - m_{e_r}\ebra{\rho_0}}\ebra{x_s - m_{e_s}\ebra{\rho_0}}\cdot \rho_0 \dif x = 0.
				\end{aligned}
			\end{equation}
			\item The non-degenerate condition
			\begin{equation*}\label{eq.105}
				P_{\lambda_\star}\ebra{1/3\cdot \partial_{\delta \rho, \delta \rho, \delta \rho}^3 \tilde{F}\ebra{\lambda_\star, 0}[\tilde{v}_\star, \tilde{v}_\star, \tilde{v}_\star] + \partial_{\delta\rho, \delta\rho}^2\tilde{F}\ebra{\lambda_\star, 0}[\tilde{v}_\star, 2 \partial_s w\ebra{\lambda_\star, 0}]} \neq 0,
			\end{equation*}
			where
			\begin{equation}\label{eq.106}
				\begin{aligned}
					& P_{\lambda_\star} \partial_{\delta \rho, \delta \rho, \delta \rho}^3 \tilde{F}\ebra{\lambda_\star, 0}[\tilde{v}_\star, \tilde{v}_\star, \tilde{v}_\star]/\tilde{v}_\star = 2 \pi^{-n/2} \cdot \lambda_\star \cdot \sum_{k, r, s, l = 1}^n \ebra{v_\star}_k \ebra{v_\star}_r \ebra{v_\star}_s \ebra{v_\star}_l \\
					& \cdot\left(\int_{\mathbb{R}^n} \ebra{x_k - m_{e_k}\ebra{\rho_0}} \ebra{x_r - m_{e_r}\ebra{\rho_0}} \cdot \ebra{x_s - m_{e_s}\ebra{\rho_0}} \ebra{x_l - m_{e_l}\ebra{\rho_0}} \cdot \rho_0 \dif x \right.\\
					& \qquad \qquad \quad -3 \cdot \left(\int_{\mathbb{R}^n} \ebra{x_k - m_{e_k}\ebra{\rho_0}} \ebra{x_r - m_{e_r}\ebra{\rho_0}} \cdot \rho_0 \dif x\right) \\
					& \qquad \qquad \qquad \quad \cdot \left. \left(\int_{\mathbb{R}^n} \ebra{x_s - m_{e_s}\ebra{\rho_0}}\ebra{x_l - m_{e_l}\ebra{\rho_0}} \cdot \rho_0 \dif x\right)\right),
				\end{aligned}
			\end{equation}
			and
			\begin{equation}\label{eq.107}
				\begin{aligned}
					P_{\lambda_\star}\partial_{\delta \rho, \delta \rho}^2 & \tilde{F} \ebra{\lambda_\star, 0} [\tilde{v}_\star, 2 \partial_s w\ebra{\lambda_\star, 0}]/\tilde{v}_\star \\
					& \quad = -2 \pi^{-n/2} \cdot \lambda_\star^2 \cdot \sum_{\lambda_j \neq \lambda_\star} \frac{\lambda_j}{\lambda_j - \lambda_\star} \cdot \biggl(\sum_{k, r, s = 1}^n \ebra{v_\star}_k \ebra{v_\star}_r \ebra{v_j}_s \\
					& \qquad \cdot \int_{\mathbb{R}^n} \ebra{x_k - m_{e_k}\ebra{\rho_0}} \ebra{x_r - m_{e_r}\ebra{\rho_0}} \ebra{x_s - m_{e_s}\ebra{\rho_0}} \cdot \rho_0 \dif x\biggr)^2.
				\end{aligned}
			\end{equation}
		\end{enumerate}
		Then, the system \eqref{eq.1} exhibits a continuous phase transition controlled by the pitchfork bifurcation at $\ebra{\lambda_\star, \rho_0\ebra{\lambda_\star}}$. Moreover, both the local characterizations of solution curves and the global arguments in Theorem \ref{thm.7} hold.
	\end{theorem}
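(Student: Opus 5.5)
The plan is to reduce Theorem \ref{thm.8} to Theorem \ref{thm.7}. Under (H4) the operator $K_\lambda$ has finite rank, and every abstract condition collapses to a moment identity of $\rho_0$. The work proceeds in three stages.

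\textbf{Stage 1: regularity hypotheses.} With $W=|x|^2/2$ we have $W*\omega = \tfrac12|x|^2 m_0(\omega) - \eangle{x,m_1(\omega)} + \tfrac12 m_2(\omega)$. The $x$-independent parts cancel against the subtracted mean, so
$$K\omega = -\rho_0\sum_{k}(x_k-m_{e_k}(\rho_0))\,m_{e_k}(\omega) + \tfrac12 m_0(\omega)\,\rho_0\big(|x|^2-m_2(\rho_0)\big).$$
This is a finite-rank operator on $X=L^1(\mathbb{R}^n,1+|x|^2)$. It is bounded because $|m_{e_k}(\omega)|,|m_0(\omega)|\le\norm{\omega}_X$, and because, by the coercivity of $V$ in (H3) (Lemma \ref{lem.1}), $\rho_0$ has moments of all orders. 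Finite rank gives compactness.

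For the nonlinear map, I write $\mathcal{T}(\lambda,\rho_0+\delta\rho)=\rho_0 e^{\lambda(W*\delta\rho)}/\eangle{e^{\lambda W*\delta\rho}}_{\rho_0}$. Here $W*\delta\rho$ depends on $\delta\rho$ only through the finitely many moments $m_0,m_1,m_2$, each continuous on $X$. The map is therefore a smooth function $\Phi(\lambda,m(\delta\rho))$ of a finite-dimensional variable. Dominated convergence, again using the Gaussian-type tails of $\rho_0$, gives $C^3$ Fr\'echet regularity. Because the map factors through a finite-dimensional space, $G$ is completely continuous. Moreover $G=O(\norm{\delta\rho}^2)$ uniformly in $\lambda\in J$, by Taylor's theorem with the third derivatives bounded on compact parameter sets.

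\textbf{Stage 2: the criterion and the spectral data.} For part (I), I invoke Theorem \ref{thm.7}(I), so it remains to translate \eqref{eq.93}. On probability perturbations, where $m_0(\delta\rho)=0$, we get $K\omega=-\rho_0\,(x-m_1)\cdot m_1(\omega)$. Taking the first moment yields $m_1(K\omega)=-\text{Cov}(\rho_0)\,m_1(\omega)$. Hence $(id-\lambda K)\omega=0$ with $\omega\ne0$ forces $m_1(\omega)\ne0$ and $\text{Cov}(\rho_0)\,m_1(\omega)=-\lambda^{-1}m_1(\omega)$, which is exactly \eqref{eq.102}. Conversely, an eigenvector $v_i$ gives the kernel element $\tilde v_i$.

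Next I verify $K\tilde v_i=\lambda_i^{-1}\tilde v_i$ and $K^*\tilde u_i=\lambda_i^{-1}\tilde u_i$ directly. The $\xi^{(i)}$ correction in $\tilde u_i$ is chosen precisely to absorb the $|x|^2$ component of $K$. The simplicity of the eigenvalue $-\lambda_\star^{-1}$ gives a one-dimensional null space and codimension one. Fredholm index zero follows from $id-\lambda_\star K$ being a compact perturbation of the identity. Semi-simplicity follows from $\eangle{\tilde v_\star,\tilde u_\star}\neq0$, which holds because $\eangle{\tilde v_\star,\tilde u_\star}$ is proportional to $v_\star^T\text{Cov}\,v_\star=\theta_\star>0$. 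Hence $P_{\lambda_\star}=\eangle{\cdot,\tilde u_\star}\tilde v_\star/\eangle{\tilde v_\star,\tilde u_\star}$.

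\textbf{Stage 3: the three bifurcation conditions.} I substitute $\tilde v_\star/\rho_0 = c\,(x-m_1)\cdot v_\star$, with $c=\sqrt2\pi^{-n/4}$, into \eqref{eq.95}, \eqref{eq.96} and \eqref{eq.98}, and pair the results with $\tilde u_\star$.
\begin{itemize}
\item For \eqref{eq.103}: pairing the transversality expression with $\tilde u_\star$ produces second moments of $\partial_\lambda\rho_0$. The constant term of $\tilde u_\star$ drops out because the functions being paired have zero total mass.
\item For \eqref{eq.104}: the pitchfork condition gives $c^2\,\mathbb{E}_{\rho_0}[((x-m_1)\cdot v_\star)^2\,(x\cdot v_\star)]$, which is the centered third moment.
\item For \eqref{eq.106}: the third derivative similarly gives the centered fourth moment minus three times the squared variance.
\end{itemize}

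The main obstacle is \eqref{eq.107}, which requires computing $2\partial_s w$ by inverting $id-\lambda_\star K$ on its range. My first attempt is to expand $(id-P_{\lambda_\star})\partial^2\tilde F[\tilde v_\star,\tilde v_\star]$ in the spectral decomposition $\{\tilde v_j\}_{j\ne\star}$ plus a component in $\ker K$. On the $\tilde v_j$ component, $(id-\lambda_\star K)^{-1}$ acts as multiplication by $\lambda_j/(\lambda_j-\lambda_\star)$. On $\ker K$ it acts as the identity, and that contribution is killed when fed back through $\partial^2\tilde F[\tilde v_\star,\cdot]$, since that expression only sees $K\omega$. The coefficient of $\tilde v_j$ is a third centered moment in the directions $v_\star,v_\star,v_j$. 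Feeding back gives the squared sum in \eqref{eq.107}. The delicate points are the bookkeeping of normalisations and checking that the $\ker K$ and $|x|^2$ components really vanish after the projection.

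With all hypotheses of Theorem \ref{thm.7} verified, its conclusions, both the local curve \eqref{eq.99} and the global alternative, transfer to Theorem \ref{thm.8}.
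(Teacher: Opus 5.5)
Your proposal is correct. Your Stage~3 is essentially the paper's own computation. The paper also pairs each derivative with $\tilde u_\star$, and uses $\eangle{\tilde v_\star,\tilde u_\star}=-\sqrt2\pi^{-n/4}\lambda_\star^{-1}$ and $\int\partial_\lambda\rho_0\dif x=0$. For \eqref{eq.107} it inverts $id-\lambda_\star K$ on $\mathrm{span}\{\tilde v_j\}_{j\neq\star}$ exactly as you propose: its formula for $2\partial_s w$ is your $-\sum_{j\neq\star}\frac{\lambda_j}{\lambda_j-\lambda_\star}a_j\tilde v_j-z_0$, rewritten using $K^*\tilde u_j=\lambda_j^{-1}\tilde u_j$.

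The genuine differences are in Stages~1--2.
\begin{itemize}
\item \textbf{Regularity.} The paper proves compactness of the Gibbs map on $X$ via Fr\'echet--Kolmogorov--Riesz (Lemma~\ref{thm.1}). It then deduces compactness of $K$ from that, and controls $G$ by a Lagrange remainder. You instead use that $K$ has rank $n+1$, and that $\mathcal T(\lambda,\rho_0+\delta\rho)$ factors through $(m_0(\delta\rho),m_1(\delta\rho))\in\mathbb R^{1+n}$. This gives compactness, complete continuity of $G$ and $C^3$ regularity in one stroke.
\item \textbf{Spectrum.} The paper writes $K$ as a matrix in the transformed Hermite basis $\{\Psi_\alpha\}_{|\alpha|\le 2}$ and reduces the characteristic polynomial to $\theta^{n^2/2+n/2+1}\det(\theta+\mathrm{Cov}(\rho_0))$ (Lemma~\ref{lemma.2}). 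You read the nonzero spectrum directly off $m_0(K\omega)=0$ and $m_1(K\omega)=-\mathrm{Cov}(\rho_0)\,m_1(\omega)$.
\end{itemize}
Your route is far shorter and makes the eigenvalue--covariance link transparent. The paper's Hermite machinery is heavier, but it is what the paper reuses for non-quadratic kernels (Remark~\ref{rem.5}, Examples~\ref{ex.5} and~\ref{ex.6}). For the Gaussian kernel there is no finite moment factorization.

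Two justifications should be made explicit.

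First, in (I) you restrict to mass-zero $\omega$. This is legitimate because $m_0(K\omega)=0$ for every $\omega\in X$, so any solution of $\omega=\lambda_\star K\omega$ automatically has $m_0(\omega)=0$. Say so, since $X$ contains elements of nonzero mass.

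Second, $\mathrm{span}\{\tilde v_j\}\oplus\ker K$ is \emph{not} a decomposition of $X$, because $0$ is not a semisimple eigenvalue of $K$. Set $M:=\int(x-m_1(\rho_0))|x|^2\rho_0\dif x$. Any $\omega$ with $m_0(\omega)=1$ and $m_1(\omega)=\tfrac12\mathrm{Cov}(\rho_0)^{-1}M$ lies in $\ker K^2\setminus\ker K$. This matches the paper's count: algebraic multiplicity $n^2/2+n/2+1$ of $0$ on $S$ versus rank $n+1$. On such a component, $(id-\lambda_\star K)^{-1}=id+\lambda_\star K$ is not the identity. Nor is it annihilated by $\partial^2_{\delta\rho,\delta\rho}\tilde F(\lambda_\star,0)[\tilde v_\star,\cdot]$.

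Your step is saved by the fact that $z=\partial^2_{\delta\rho,\delta\rho}\tilde F(\lambda_\star,0)[\tilde v_\star,\tilde v_\star]$ has $m_0(z)=0$. On the hyperplane $\{m_0=0\}$ one does have $\{m_0=0\}=\mathrm{span}\{\tilde v_j\}\oplus\ker K$. This is exactly the fact the paper invokes, and it disposes of the ``$|x|^2$ component'' you flagged.

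Relatedly, complete continuity of $G$ on bounded sets, where $\lambda m_0(\delta\rho)$ can be large and positive, needs the superquadratic decay of $\rho_0$ from Lemma~\ref{lem.1}, not merely Gaussian-type tails.
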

	
	\begin{remark}\label{rem.7}
		From (\ref{eq.104}), the pitchfork condition is satisfied as long as $\rho_0\ebra{\lambda_\star}$ is symmetric in the direction of eigenvector $v_\star$. The pitchfork bifurcation implies symmetry breaking and an exchange of stability in the eigendirection.
	\end{remark}
	
	\begin{remark}\label{rem.8}
		We can describe how the barycenter of solutions for the bifurcation equation changes near the bifurcation point. Let $m(\lambda(s))$ be the solution of the moment bifurcation equation (\ref{eq.16}) with $\lambda(s)$ as in \eqref{eq.99}, and let the corresponding invariant measure be
		\begin{equation*}\label{eq.107.2}
			\rho\ebra{s} = \frac{1}{\tilde{Z}\ebra{s}}\cdot \exp\ebra{\lambda\ebra{s}\cdot \ebra{V\kappa^{-1} + \frac{1}{2}|x|^2 - \eangle{m\ebra{\lambda\ebra{s}}, x}}}.
		\end{equation*}
		Differentiating it at $s=0$ and using the fact that $\lambda^\prime\ebra{0} = 0$, we have
		\begin{equation*}\label{eq.107.3}
			\begin{aligned}
				\partial_s \rho\ebra{s}|_{s=0} & = \left. -\lambda\ebra{s} \cdot \eangle{\partial_s m\ebra{\lambda\ebra{s}}, x - \int_{\mathbb{R}^n} x \cdot \rho\ebra{s} \dif x}\cdot \rho\ebra{s}\right|_{s=0}\\
				& = - \lambda_\star \cdot \eangle{\partial_s m\ebra{\lambda\ebra{s}}|_{s=0}, x - \int_{\mathbb{R}^n} x \cdot \rho_0 \dif x} \cdot \rho_0.
			\end{aligned}
		\end{equation*}
		On the other hand, in view of \eqref{eq.99}, we have $\partial_s \rho\ebra{s}|_{s=0} = \tilde{v}_\star$. By comparing the two expressions, we deduce that the direction in $\mathbb{R}^n$ along which the barycenter moves is exactly the eigenvector direction; \hl{that is, there exists $\tilde{k} \in \mathbb{R} \setminus \{0\}$ such that}
		\begin{equation*}\label{eq.107.4}
			\partial_s m\ebra{\lambda\ebra{s}}|_{s = 0} = \tilde{k}  \cdot v_\star.
		\end{equation*}
	\end{remark}
	
	Under the global symmetry assumption on $V$, the McKean-Vlasov diffusion naturally admits a family of symmetric invariant measures $\{\rho_0\ebra{\lambda}\}$ as trivial solutions, which can be written explicitly as
	\begin{equation*}\label{eq.108.2}
		\rho_0\ebra{\lambda} = \frac{1}{\tilde{Z}\ebra{\lambda}}\cdot\exp\ebra{\lambda \cdot \ebra{V \kappa^{-1}  + \frac{1}{2}|x|^2}}.
	\end{equation*}
	Since $V\ebra{\cdot} = V\ebra{-\cdot}$, the odd-order moments of $\{\rho_0\ebra{\lambda}\}$ vanish:
	\begin{equation*}\label{eq.108}
		\int_{\mathbb{R}^n} \ebra{\prod_{i=1}^n x_i^{\alpha_i}} \cdot \rho_0 \dif x = 0, \qquad |\alpha| \text{ is odd}.
	\end{equation*}
	Therefore, when investigating phase transitions originating from $\{\rho_0(\lambda)\}$ as the temperature decreases, the analytical complexity is considerably reduced. We have the following corollary tailored to this case:
	\begin{corollary}\label{cor.1}
		Assume (H1)--(H4) and the symmetry of the potential $V$. Choose the family of symmetric invariant measures $\{\rho_0\ebra{\lambda}\}$ as trivial solutions. It is necessary that the generalized Dawson's criterion is satisfied
		\begin{equation}\label{eq.109}
			-\lambda_\star^{-1} \text{ belongs to eigenvalues of } \text{Cov}\ebra{\rho_0\ebra{\lambda_\star}}.
		\end{equation}
		Moreover, if $-\lambda_\star^{-1}$ is a simple eigenvalue with unit eigenvector $v_\star$, and the following conditions hold:
		\begin{enumerate}
			\item The transversality condition
			\begin{equation*}\label{eq.110}
				P_{\lambda_\star} \partial_{\lambda, \delta\rho}^2 \tilde{F}\ebra{\lambda_\star, 0}[\tilde{v}_\star]/\tilde{v}_\star = -\lambda_\star^{-1} + \lambda_\star \cdot \sum_{k, r = 1}^n \ebra{v_\star}_k\ebra{v_\star}_r \cdot m_{e_k + e_r}\ebra{\partial_{\lambda} \rho_0} \neq 0,
			\end{equation*}
			where
			\begin{equation*}\label{eq.111}
				\partial_\lambda \rho_0 = \rho_0 \cdot \left(V/\kappa + W * \rho_0 - \int_{\mathbb{R}^n} \ebra{V/\kappa + W * \rho_0} \cdot \rho_0 \dif x\right).
			\end{equation*}
			\item The pitchfork condition $P_{\lambda_\star} \partial_{\delta \rho, \delta \rho}^2 \tilde{F}\ebra{\lambda_\star, 0}[\tilde{v}_\star, \tilde{v}_\star] = 0$ is automatically satisfied.
			\item The non-degenerate condition $P\ebra{1/3\cdot \partial_{\delta \rho, \delta \rho, \delta \rho}^3 \tilde{F}\ebra{\lambda_\star, 0}[\tilde{v}_\star, \tilde{v}_\star, \tilde{v}_\star]} \neq 0$.
		\end{enumerate}
		Then the system undergoes a pitchfork bifurcation at $\ebra{\lambda_\star, \rho_0\ebra{\lambda_\star}}$.
	\end{corollary}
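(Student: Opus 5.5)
The corollary will be obtained by specializing Theorem \ref{thm.8} to the symmetric family $\{\rho_0(\lambda)\}$, so the work consists of checking that every odd-moment term in the formulas (\ref{eq.103})--(\ref{eq.107}) vanishes.

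\textbf{Step 1: admissibility of the trivial family.} First I will confirm that $\rho_0(\lambda)$ solves (\ref{eq.15}). Since $V$ is even, $\rho_0(\lambda)$ is even, so $m_1(\rho_0)=0$. Under (H4) we have $W*\rho_0(x) = |x|^2/2 - \langle m_1(\rho_0),x\rangle + m_2(\rho_0)/2$. The constant term is absorbed into $\tilde Z$, so $\mathcal T(\lambda,\rho_0)=\rho_0$. By evenness, every moment $m_\alpha(\rho_0)$ with $|\alpha|$ odd vanishes. The regularity claims and the necessity of (\ref{eq.109}) are then exactly part (I) of Theorem \ref{thm.8}.

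\textbf{Step 2: transversality.} Differentiating $\rho_0(\lambda)=\tilde Z^{-1}\exp(\lambda(V/\kappa+|x|^2/2))$ in $\lambda$ gives the stated formula for $\partial_\lambda\rho_0$. Here $W*\rho_0$ differs from $|x|^2/2$ only by a constant, which cancels after centering. The function $\partial_\lambda\rho_0$ is $\rho_0$ times an even function, so it is even and $m_{e_k}(\partial_\lambda\rho_0)=0$. Combined with $m_{e_k}(\rho_0)=0$, the cross terms in (\ref{eq.103}) drop out and the stated transversality expression remains.

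\textbf{Step 3: pitchfork and non-degeneracy.} With $m_{e_k}(\rho_0)=0$, the integrand in (\ref{eq.104}) is $x_kx_rx_s\rho_0$, an odd function. Its integral is therefore zero, so the pitchfork condition holds automatically, consistent with Remark \ref{rem.7}.

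The same third-order moments appear inside the squares in (\ref{eq.107}), so $P_{\lambda_\star}\partial^2\tilde F[\tilde v_\star,2\partial_s w]=0$. To be safe, I will check this directly rather than only through (\ref{eq.107}). By (\ref{eq.96}), $\partial^2\tilde F[\tilde v_\star,\tilde v_\star]$ equals $-\rho_0$ times a centered even polynomial, hence it is even. Inverting $id-\lambda_\star K$ preserves parity, because $K$ commutes with $x\mapsto -x$ when $\rho_0$ is even. So $\partial_s w$ is even. Then $\partial^2\tilde F[\tilde v_\star,\partial_s w]$ has the form $\rho_0$ times (odd $\times$ even) minus a constant. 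The constant is the integral of an odd function and so vanishes, leaving $\rho_0$ times an odd function.

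\textbf{Step 4: conclusion.} To finish, I need the projection of an odd function to be nonzero only through its linear component. The projection is computed by pairing against $\tilde u_\star=\sum_k(x_k+\xi_k)(v_\star)_k$, where $\xi$ involves the odd moments $m_{2e_k+e_r}-m_{2e_k}m_{e_r}$. These vanish, so $\tilde u_\star$ is linear. This pairing is the only genuinely delicate point. I do not expect it to vanish in general; instead, its value is exactly the formula (\ref{eq.107}), which is zero because every third moment inside the squares vanishes.

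Condition (3) of Theorem \ref{thm.8} therefore reduces to the cubic term alone, which is the stated non-degeneracy assumption. All hypotheses of Theorem \ref{thm.8}(II) hold, and the pitchfork bifurcation follows. I expect the main obstacle to be the parity bookkeeping for $\partial_s w$, in particular confirming that the restricted inverse in (\ref{eq.97}) maps even functions to even functions.
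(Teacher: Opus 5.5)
Your proposal is correct and follows the paper's own route: the paper proves Corollary~\ref{cor.1} in one line by specializing Theorem~\ref{thm.8} through the vanishing of the odd moments of $\rho_0(\lambda)$ together with Lemma~\ref{lemma.1.1}. You reproduce that lemma's content, namely $\tilde K_\lambda\partial_\lambda\rho_0=0$ and the formula for $\partial_\lambda\rho_0$, by differentiating the explicit Gibbs profile, and your Steps 1--4 simply spell the specialization out. A minor simplification of Step 4: the parity argument closes without appealing to (\ref{eq.107}), since $m_0(K\omega)=0$ for every $\omega$ and $\partial^2_{\delta\rho,\delta\rho}\tilde F(\lambda_\star,0)[\tilde v_\star,\tilde v_\star]$ has zero mass, so $\partial_s w$ is even with $m_0(\partial_s w)=0$; hence $W*\partial_s w$ is constant, $\tilde K\partial_s w=0$, and $\partial^2_{\delta\rho,\delta\rho}\tilde F(\lambda_\star,0)[\tilde v_\star,\partial_s w]$ vanishes identically.
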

	
	\subsection{The abstract theorem for discontinuous phase transitions}We propose a local saddle-node bifurcation (turning-point) theorem for discontinuous phase transitions. Compared with the former pitchfork case, the assumptions are quite different in that there are no trivial solutions. Instead, we focus on a single critical point $\ebra{\lambda_\star, \rho_\star}$: two branches of solution curves lie on one side of the line $\lambda = \lambda_\star$, and no branch lies on the other side.
	
	Let $\ebra{\lambda_\star, \rho_\star} \in \mathbb{R}\times X$ be the phase transition point under investigation. The bifurcation equation \eqref{eq.15} is given by
	\begin{equation}\label{eq.118.1}
		0 = F\ebra{\lambda, \rho} = \ebra{id - \mathcal{T}\ebra{\lambda, \cdot}}\ebra{\rho}.
	\end{equation}
	
	\begin{theorem}[Saddle-node bifurcation theorem]\label{thm.9}
		Consider the system \eqref{eq.1} and its bifurcation equation \eqref{eq.118.1}. Assume that (H1)--(H3) hold, and $F$ has Fr\'{e}chet derivatives up to order two. Then 
		
		(I) If the phase transition point $\ebra{\lambda_\star, \rho_\star}$ satisfies $F\ebra{\lambda_\star, \rho_\star} = 0$, it is necessary that
		\begin{equation}\label{eq.118.2}
			\partial_\rho F\ebra{\lambda_\star, \rho_\star} = id - \lambda_\star K \text{ is not invertible. }
		\end{equation}
		Here $K$ is defined the same as before.
		
		(II) In addition to \eqref{eq.118.2}, assume
		\begin{equation}\label{eq.118.3}
			\begin{aligned}
				& \lambda_\star \text{ is an algebraically simple characteristic value for the compact operator }K, \\
				& \exists \ \tilde{v}_\star:\ \text{null }\ebra{id - \lambda_\star K} =  \text{span} \{\tilde{v}_\star\} \text{ with dimension } 1.
			\end{aligned}
		\end{equation}
		Let $P_{\lambda_\star}$ be the spectral projection operator onto $\text{null } \ebra{id - \lambda_\star K}$. Assume also that
		\begin{enumerate}
			\item (The transversality condition) $ P_{\lambda_\star}\partial_\lambda F\ebra{\lambda_\star, \rho_\star} \neq 0$, that is
			\begin{equation*}\label{eq.118.4}
				P_{\lambda_\star}\left(-\rho_\star\cdot \left(V \kappa^{-1} + W * \rho_\star - \int_{\mathbb{R}^n}\ebra{V \kappa^{-1} + W * \rho_\star}\cdot \rho_\star \dif x\right)\right) \neq 0.
			\end{equation*}
			\item (The non-degenerate condition) $P \partial_{\rho, \rho}^2 F\ebra{\lambda_\star, \rho_\star}[\tilde{v}_\star, \tilde{v}_\star] \neq 0$, that is
			\begin{equation*}\label{eq.118.5}
				P_{\lambda_\star}\left(-\rho_\star \cdot \left(\ebra{\frac{\tilde{v}_\star}{\rho_\star}}^2 - \int_{\mathbb{R}^n} \ebra{\frac{\tilde{v}_\star}{\rho_\star}}^2\cdot \rho_\star \dif x\right)\right) \neq 0.
			\end{equation*}
		\end{enumerate}
		
		Then the system exhibits a phase transition governed by a saddle-node bifurcation at $\ebra{\lambda_\star, \rho_\star}$. The non-trivial solution curve can be locally represented by
		\begin{equation*}\label{eq.118.6}
			\{\ebra{\lambda\ebra{s}, \rho_\star + s \tilde{v}_\star + s \tilde{w}\ebra{s}}: \ \lambda\ebra{0} = \lambda_\star, \ \lambda^\prime\ebra{0} = 0,\ \lambda^{\prime \prime}\ebra{0} \neq 0, \ s\tilde{w}\ebra{s} = sw\ebra{\lambda\ebra{s}, s} = o\ebra{s}\}.
		\end{equation*}
		The bifurcation equation can be locally expanded as a normal form for a saddle-node bifurcation perturbed by higher-order terms
		\begin{equation*}\label{eq.118.7}
			P_{\lambda_\star} \partial_\lambda F\ebra{\lambda_\star, \rho_\star} \cdot \ebra{\lambda - \lambda_\star} + \left(1/2 \cdot P_{\lambda_\star} \partial_{\rho, \rho}^2 F\ebra{\lambda_\star, \rho_\star}[\tilde{v}_\star, \tilde{v}_\star]\right)\cdot s^2 + \cdots = 0.
		\end{equation*}
		
		(III) Under (H4), choose $X = L^1\ebra{\mathbb{R}^n, 1 + |x|^2}$. Then $F$ has Fr\'{e}chet derivatives up to order three. In this case,   (\ref{eq.118.3}) is replaced by the generalized Dawson's criterion in $\mathbb{R}^n$
		\begin{equation*}\label{eq.118.6.1}
			-\lambda_\star^{-1} \text{ is a simple eigenvalue of the covariance matrix } \text{Cov}\ebra{\rho_\star}
		\end{equation*}
		and other assumptions take the form
		\begin{enumerate}
			\item The transversality condition becomes
			\begin{equation*}\label{eq.118.8}
				\begin{aligned}
					P_{\lambda_\star} \partial_\lambda F\ebra{\lambda_\star, \rho_\star}/\tilde{v}_\star & = \ebra{\sqrt{2}\pi^{-n/4}}^{-1} \cdot \lambda_\star \cdot \sum_{k=1}^n \ebra{v_\star}_k \cdot \left(\int_{\mathbb{R}^n} \ebra{V \kappa^{-1} + W * \rho_\star}\cdot x_k \cdot \rho_\star \dif x \right. \\
					& \left. \quad - \left(\int_{\mathbb{R}^n} \ebra{V \kappa^{-1} + W * \rho_\star} \cdot \rho_\star \dif x \right) \cdot \left(\int_{\mathbb{R}^n} x_k \cdot \rho_\star \dif x \right)\right) \neq 0.
				\end{aligned}
			\end{equation*}
			\item The non-degenerate condition becomes
			\begin{equation*}\label{eq.118.9}
				\begin{aligned}
					& P_{\lambda_\star} \partial^2_{\rho, \rho} F\ebra{\lambda_\star, \rho_\star}  [\tilde{v}_\star, \tilde{v}_\star]/\tilde{v}_\star = \sqrt{2} \pi^{-n/4}\cdot \lambda_\star \cdot \sum_{k, r, s = 1}^n \ebra{v_\star}_k \ebra{v_\star}_r \ebra{v_\star}_s  \\
					& \qquad\qquad\qquad \cdot \int_{\mathbb{R}^n} \ebra{x_k - m_{e_k}\ebra{\rho_\star}}\ebra{x_r - m_{e_r}\ebra{\rho_\star}}\ebra{x_s - m_{e_s}\ebra{\rho_\star}}\cdot \rho_\star \dif x \neq 0.
				\end{aligned}
			\end{equation*}
		\end{enumerate}
	\end{theorem}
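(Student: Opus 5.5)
The plan is to reduce Theorem \ref{thm.9} to the classical Crandall--Rabinowitz turning-point theorem via a Lyapunov--Schmidt reduction, and then to compute every abstract quantity explicitly under (H4).

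\textbf{Part (I).} Argue by contraposition. If $\partial_\rho F(\lambda_\star,\rho_\star)=id-\lambda_\star K$ were invertible, then, since $F$ is $C^1$ near $(\lambda_\star,\rho_\star)$, the implicit function theorem would give a unique $C^1$ curve $\rho(\lambda)$ of zeros of $F$ in a neighbourhood. Solutions would then neither emerge nor vanish there, contradicting the definition of a phase transition point. The derivative is computed directly from \eqref{eq.15}. Differentiating $\mathcal T(\lambda,\rho)=Z^{-1}\exp(\lambda(V\kappa^{-1}+W*\rho))$ in $\rho$ along $\omega$ gives
\[
\lambda\,\rho_\star\Bigl(W*\omega-\int W*\omega\,\rho_\star\dif x\Bigr)=\lambda K\omega .
\]

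\textbf{Part (II).} Use the decomposition $X=\mathrm{span}\{\tilde v_\star\}\oplus \mathrm{range}(id-\lambda_\star K)$. This splitting is available because $K$ is compact and $\lambda_\star$ is algebraically simple, so $id-\lambda_\star K$ is Fredholm of index zero and $P_{\lambda_\star}$ is a well-defined projection. The steps are as follows.

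\begin{enumerate}
\item Write $\rho=\rho_\star+s\tilde v_\star+w$ with $w\in\mathrm{range}$.
\item Solve $(id-P_{\lambda_\star})F(\lambda,\rho_\star+s\tilde v_\star+w)=0$ for $w=w(\lambda,s)$ by the implicit function theorem. This gives $w(\lambda_\star,0)=0$ and $\partial_s w(\lambda_\star,0)=0$.
\item The bifurcation function is $g(\lambda,s)=P_{\lambda_\star}F(\lambda,\rho_\star+s\tilde v_\star+w(\lambda,s))$, viewed as a scalar via the coefficient of $\tilde v_\star$. It satisfies $g_s(\lambda_\star,0)=0$ and $g_\lambda(\lambda_\star,0)=P_{\lambda_\star}\partial_\lambda F\neq0$.
\item For the second derivative, the term involving $\partial_{ss}w$ is killed by $P_{\lambda_\star}$, leaving $g_{ss}=P_{\lambda_\star}\partial^2_{\rho\rho}F[\tilde v_\star,\tilde v_\star]\neq0$.
\item Since $g_\lambda\neq0$, apply the scalar implicit function theorem to obtain $\lambda=\lambda(s)$. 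Then $\lambda'(0)=-g_s/g_\lambda=0$ and $\lambda''(0)=-g_{ss}/g_\lambda\neq0$, which are the normal form and the curve description in the statement.
\end{enumerate}

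The explicit expressions for $\partial_\lambda F$ and $\partial^2_{\rho\rho}F$ come from differentiating the exponential formula. The normalization $Z$ produces the centered forms. For the second derivative one uses $\lambda_\star K\tilde v_\star=\tilde v_\star$ to rewrite $\lambda_\star\rho_\star(W*\tilde v_\star-\cdots)$ as $\tilde v_\star$.

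\textbf{Part (III).} Under (H4) we have $W*\omega=\tfrac12|x|^2 m_0(\omega)-\langle x,m_1(\omega)\rangle+\tfrac12 m_2(\omega)$. After centering, $K\omega=-\rho_\star\langle x-m_1(\rho_\star),m_1(\omega)\rangle$ on the relevant subspace, the $m_0$ part vanishing on mass-zero perturbations, so $K$ has finite rank. Its nonzero spectrum is governed by $m_1(K\omega)=-\mathrm{Cov}(\rho_\star)m_1(\omega)$. Hence $id-\lambda_\star K$ is singular iff $-\lambda_\star^{-1}$ is an eigenvalue of $\mathrm{Cov}(\rho_\star)$, with null vector $\tilde v_\star$ as defined before Theorem \ref{thm.8}. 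Simplicity of the eigenvalue gives algebraic simplicity, with adjoint eigenvector $\tilde u_\star$ and projection $\langle\cdot,\tilde u_\star\rangle\tilde v_\star/\langle\tilde v_\star,\tilde u_\star\rangle$.

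Substituting into the projected expressions produces the stated formulas. The transversality quantity becomes the covariance between $V\kappa^{-1}+W*\rho_\star$ and $x$ along $v_\star$, and the non-degeneracy quantity becomes the third centered moment along $v_\star$. The constants $\xi_k$ drop out because the tested functions have zero $\rho_\star$-mean.

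\textbf{Main obstacle.} I expect the main difficulty to be the functional-analytic verification on $X=L^1(\mathbb R^n,1+|x|^2)$. One must show that $F$ is $C^2$ (indeed $C^3$) Fréchet differentiable there and that $K$ is compact. The exponential of $\lambda\langle m_1(\rho),x\rangle$ must be controlled uniformly with weights $(1+|x|^2)$. I would handle this using the superquadratic coercivity of $V$ from (H3) (Lemma \ref{lem.1}), which dominates the linear-in-$x$ exponent for $\rho$ in a bounded neighbourhood. Dominated convergence then gives differentiability of the Gaussian-type integrals in $m_1$. The same argument as in Theorem \ref{thm.8} should transfer, since there the trivial branch played no role in these estimates.
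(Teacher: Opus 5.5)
Your proposal is correct and follows the paper's route. The paper simply invokes the abstract turning-point theorem (Theorem~\ref{thm.6}) together with the Gibbs-map derivative formulas of Lemma~\ref{lemma.1}, and your Lyapunov--Schmidt steps are exactly the proof of that theorem, including the use of $\lambda_\star K\tilde v_\star=\tilde v_\star$ to reach the displayed second-derivative form.

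The one genuinely different ingredient is the spectral input for Part (III). The paper obtains it from Lemma~\ref{lemma.2}, which writes $K$ as a matrix on the $(n^2/2+3n/2+1)$-dimensional Hermite span $\{\Psi_\alpha:|\alpha|\le 2\}$ with characteristic polynomial $\theta^{n^2/2+n/2+1}\det(\theta+\mathrm{Cov}(\rho_\star))$, and from the projection formula of Lemma~\ref{lemma.2.1}. You instead read the spectrum off the finite-rank factorization of $K$ through $(m_0,m_1)$. Your version is shorter and shows directly why only $\mathrm{Cov}(\rho_\star)$ matters and why the nonzero eigenvalues are semisimple. The paper's explicit matrix on $S$ is also what makes $(id-\lambda_iK)^{-1}$ on the range computable in the pitchfork case, but a saddle-node argument does not need that.

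There are two points to tighten.
\begin{enumerate}
\item The passage to mass-zero perturbations should be justified by the identity $m_0(K\omega)=0$ for all $\omega\in X$. This gives that every (generalized) eigenvector with nonzero eigenvalue has zero mass. On all of $X$ one only has $m_1(K\omega)=-\mathrm{Cov}(\rho_\star)m_1(\omega)+\tfrac12 m_0(\omega)\int x\,(|x|^2-m_2(\rho_\star))\rho_\star\,\mathrm{d}x$. This $m_0$-component of $K$ is what forces the constants $\xi_k$ in $\tilde u_\star$, and they then drop out as you say.
\item The differentiability estimate must also absorb the term $\tfrac{\lambda}{2}m_0(\rho)|x|^2$ in the exponent, not only $\lambda\langle m_1(\rho),x\rangle$. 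The superquadratic coercivity of Lemma~\ref{lem.1} handles this equally well. In fact $\mathcal T(\lambda,\cdot)$ factors through the bounded map $\rho\mapsto(m_0(\rho),m_1(\rho))$, so the $C^3$ claim reduces to dominated convergence in finitely many parameters.
\end{enumerate}
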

	
	
	\subsection{Examples} \label{exs}
	The motivating examples presented in this subsection satisfy the conditions of the abstract theorems proposed earlier. Their proofs are provided in Section \ref{ex_proof}.
	\begin{example}\label{ex.1}
		(Dawson's symmetric double-well model in $\mathbb{R}$ \cite{Daw83})
		Let $n = 1$ and consider the symmetric double-well potential
		\begin{equation*}\label{eq.91.1.0.1}
			V\ebra{x} = \frac{1}{4}x^4 - \frac{1}{2}x^2.
		\end{equation*}
		Let the interaction be given by $W(x) = x^2/2$, with interaction strength $\kappa > 0$ fixed. Choose the family of symmetric invariant measures
		\begin{equation*}\label{eq.91.1.1}
			\rho_0 \ebra{\lambda} = \frac{1}{\tilde{Z}\ebra{\lambda}}\cdot\exp\ebra{\lambda\cdot \ebra{V\kappa^{-1} + \frac{1}{2}x^2}}, \qquad \lambda = -\frac{2\kappa}{\sigma^2} < 0,
		\end{equation*}
		as trivial solutions. Then 
		\begin{enumerate}
			\item Locally, there exists a phase transition point $\ebra{\lambda_\star, \rho_0\ebra{\lambda_\star}}$, at which the system exhibits a continuous phase transition controlled by a subcritical pitchfork bifurcation determined by \textit{Dawson's criterion}
			\begin{equation}\label{eq.91.1.2}
				\frac{\sigma_\star^2}{2\kappa} = -\lambda_\star^{-1} = \int_{\mathbb{R}} x^2 \cdot \rho_0\ebra{\lambda_\star} \dif x.
			\end{equation}
			\item Globally, the phase transition point and the solution for (\ref{eq.91.1.2}) are both unique. The bifurcation curve is unbounded in $\mathbb{R}\times L^1\ebra{\mathbb{R}, 1+x^2}$. For any $\sigma > \sigma_\star$, the unique invariant measure is the trivial solution $\rho_0$. For any $0 < \sigma < \sigma_\star$, there exist three invariant measures: $\rho_{-}, \rho_0, \rho_{+}$, where $\rho_-, \rho_+$ are both asymmetric and $\rho_{-}\ebra{-x} = \rho_{+}\ebra{x}$  for $x \in \mathbb{R}$. Moreover, 
			\begin{equation*}\label{eq.91.1.3}
				\int_{\mathbb{R}} x^2 \cdot \rho_0 \dif x > \frac{\sigma^2}{2\kappa}, \qquad \int_{\mathbb{R}} x^2 \cdot \rho_{+, -} \dif x - \left(\int_{\mathbb{R}} x \cdot \rho_{+, -} \dif x\right)^2< \frac{\sigma^2}{2\kappa}.
			\end{equation*}
		\end{enumerate}
	\end{example}
	
	\begin{example}\label{ex.3}
		(An asymmetric double-well model in $\mathbb{R}$ with a discontinuous phase transition)
		Let $n = 1$ and consider the asymmetric double-well potential with a deeper right well
		\begin{equation*}\label{eq.91.1.0.1.1}
			V\ebra{x} = \frac{1}{4}x^4 - \frac{1}{3} x^3 - \frac{1}{2}x^2.
		\end{equation*}
		Let the attractive interaction term be given by $W(x) = x^2/2$ with interaction strength $\kappa = 1$ fixed for convenience. Then 
		\begin{enumerate}
			\item Locally, if we denote
			\begin{equation*}\label{eq.91.1.8}
				\rho\ebra{\lambda, m} := \frac{1}{Z\ebra{\lambda, m}} \cdot \exp \ebra{\lambda \cdot \ebra{V + \frac{1}{2}x^2 - mx}},
			\end{equation*}
			then there exists a phase transition point $\ebra{\lambda_\star, \rho_\star} = \ebra{\lambda_\star, \rho\ebra{\lambda_\star, m_\star}}$ with $m_\star < -2/27$, where $\rho_\star$ is concentrated on the left well, at which the system exhibits a discontinuous phase transition governed by a subcritical saddle-node bifurcation. Moreover, $\ebra{\lambda_\star, m_\star}$ is a solution to the fixed point equation consisting of the moment bifurcation equation (\ref{eq.16}) and Dawson's criterion (\ref{eq.102}):
			\begin{equation}\label{eq.91.1.9}
				\begin{cases}
					& m = \int_{\mathbb{R}} x \cdot \rho\ebra{\lambda, m} \dif x, \\
					& \lambda = -\left(\int_{\mathbb{R}} x^2 \cdot \rho\ebra{\lambda, m} \dif x - \left(\int_{\mathbb{R}} x \cdot \rho\ebra{\lambda, m} \dif x\right)^2\right)^{-1}.
				\end{cases}
			\end{equation}
			The saddle-node structure implies that, for $\lambda \in \ebra{\lambda_\star, \lambda_\star + \epsilon}$ with $\epsilon$ small, there are no invariant measures close to $\rho_\star$. For $\lambda \in \ebra{\lambda_\star - \epsilon, \lambda_\star}$, there are two distinct invariant measures $\rho\ebra{\lambda, m_-^{(1)}}$ and $\rho\ebra{\lambda, m_-^{(2)}}$ close to $\rho_\star$.
			\item Globally, the system has at most three distinct invariant measures. For every $\lambda < \lambda_\star$, there exists an invariant measure concentrated on the right well $\rho\ebra{\lambda, m_+}$ ($m_+ > 0$) with
			\begin{equation}\label{eq.var.1}
				\int_{\mathbb{R}} x^2 \cdot \rho\ebra{\lambda, m_+} \dif x - \left(\int_{\mathbb{R}} x \cdot \rho\ebra{\lambda, m_+} \dif x\right)^2< \frac{\sigma^2}{2}.
			\end{equation}
			For every $\lambda < \lambda_\star$, there are exactly two invariant measures $\rho\ebra{\lambda, m_-^{(1)}}$ and $\rho\ebra{\lambda, m_-^{(2)}}$, $m_-^{(1)} < m_-^{(2)}$ concentrated on the left well with 
			\begin{equation}\label{eq.var.2}
				\text{Var}(\rho\ebra{\lambda, m_-^{(1)}}) < \frac{\sigma^2}{2}, \qquad \text{Var}\ebra{\rho\ebra{\lambda, m_-^{(2)}}} > \frac{\sigma^2}{2}. 
			\end{equation}
		\end{enumerate}
	\end{example}
	
	\begin{remark}\label{rem.stableunstableandvariance}
		By \cite[Theorem 2.1 and Example 2.2]{Zha25a}, the invariant measure is locally stable if $\operatorname{Var}(\rho) < \sigma^2/2\kappa$, and unstable if $\operatorname{Var}(\rho) > \sigma^2/2\kappa$.
	\end{remark}
	
	\begin{example}\label{ex.2}
		(A symmetric four-well model undergoing multiple continuous phase transitions and admitting at most $9$ invariant measures in $\mathbb{R}^2$ (see Figure \ref{Fig.2})) Let $n=2$ and consider the symmetric four-well potential
		\begin{equation*}\label{91.1.3.1}
			\begin{aligned}
				& V\ebra{x_1, x_2} = \ebra{V_1 \oplus V_2} \ebra{x_1, x_2} := V_1\ebra{x_1} + V_2 \ebra{x_2},\\
				& V_1\ebra{x_1} = x_1^4 - 2 x_1^2, \qquad V_2\ebra{x_2} = \frac{1}{4}x_2^4 - \frac{1}{2}x_2^2.
			\end{aligned}
		\end{equation*}
		Let the interaction be $W\ebra{x_1, x_2} = \ebra{x_1^2 + x_2^2}/2$, with interaction strength $\kappa > 0$ given.
		
		We define candidates for trivial invariant measures
		\begin{equation*}\label{eq.91.1.4}
			\begin{aligned}
				& \rho_0\ebra{\lambda, \textbf{m}}\ebra{x_1, x_2} := \ebra{\rho_0^{(1)}\otimes \rho_0^{(2)}}\ebra{x_1, x_2} := \rho_0^{(1)}\ebra{\lambda, m_1}\ebra{x_1} \cdot \rho_0^{(2)}\ebra{\lambda, m_2}\ebra{x_2}, \\
				& \rho_0^{(i)}\ebra{\lambda, m_i}\ebra{x_i} := \frac{1}{\tilde{Z} \ebra{\lambda, m_i}} \cdot \exp\ebra{\lambda \cdot \ebra{V_i\kappa^{-1}  + \frac{1}{2}x_i^2 - m_i x_i}}, \quad i \in \{1, 2\},
			\end{aligned}
		\end{equation*}
		where $\textbf{m} = \ebra{m_1, m_2}$ and $\tilde{Z}$ is the normalization constant. Let $\sigma_\star^{(1)}$ and $\sigma_\star^{(2)}$ be phase-transition temperatures satisfying the one-dimensional Dawson criterion, respectively,
		\begin{equation*}\label{eq.91.1.5}
			\frac{\ebra{\sigma_\star^{(i)}}^2}{2\kappa} = - \ebra{\lambda_\star^{(i)}}^{-1} = \int_{\mathbb{R}} x_i^2 \cdot \rho_0^{(i)}\ebra{x_i} \dif x_i,
		\end{equation*}
		and let $\pm m^{(i)}\ebra{\lambda} = \pm m^{(i)}\ebra{-2\kappa/\sigma^2}$ be the non-trivial solution to the moment bifurcation equation (\ref{eq.16}) with respect to $\rho_0^{(i)}$ when $\sigma < \sigma_\star^{(i)}$. Then $\sigma_\star^{(1)} > \sigma_\star^{(2)}$, implying that a deeper potential well possesses a higher critical phase transition temperature, and
		\begin{enumerate}
			\item Locally, when the temperature drops from high to low, the system undergoes a first-time phase transition at $\ebra{\lambda_\star^{(1)}, \rho_0\ebra{\lambda_\star^{(1)}, \ebra{0, 0}}}$, causing symmetry breaking in the $x_1$ direction. The system undergoes three second-time phase transitions simultaneously at $\ebra{\lambda_\star^{(2)}, \rho_0\ebra{\lambda_\star^{(2)}, \ebra{0, 0}}}$,  $\ebra{\lambda_\star^{(2)}, \rho_0\ebra{\lambda_\star^{(2)}, \ebra{\pm m^{(1)} \ebra{\lambda_\star^{(2)}}, 0}}}$ respectively, causing symmetry breaking in the $x_2$ direction.
			
			All these phase transitions are continuous and are governed by pitchfork bifurcations determined by the generalized Dawson criteria in $\mathbb{R}^2$
			\begin{equation*}\label{eq.91.1.6}
				\begin{aligned}
					& -\lambda^{-1} \in \text{Spec}
					\begin{pmatrix}
						\int_{\mathbb{R}} x_1^2 \cdot \rho_0^{(1)}\ebra{\lambda, 0} \dif x_1 & 0\\
						0 & \int_{\mathbb{R}} x_2^2 \cdot \rho_0^{(2)}\ebra{\lambda, 0} \dif x_2
					\end{pmatrix}, \\
					& -\lambda^{-1} \in \text{Spec}
					\begin{pmatrix}
						\int_{\mathbb{R}} x_1^2 \cdot \rho_0^{(1)}\ebra{\lambda, m^{(1)}\ebra{\lambda}} \dif x_1 - & \multirow{2}{*}{$0$} \\
						\left(\int_\mathbb{R} x_1 \cdot \rho_0^{(1)}\ebra{\lambda, m^{(1)}\ebra{\lambda}}\dif x_1\right)^2 & \\
						0 & \int_{\mathbb{R}} x_2^2 \cdot \rho_0^{(2)}\ebra{\lambda, 0} \dif x_2
					\end{pmatrix},
				\end{aligned}
			\end{equation*}
			where ``Spec" denotes the set of eigenvalues of the covariance matrix.
			\item Globally, the non-trivial solution curve emerging after the second-time phase transitions becomes unbounded in $\mathbb{R} \times L^1\ebra{\mathbb{R}^2, 1+ |x|^2}$. For any $\sigma \geq \sigma_\star^{(1)}$, the unique invariant measure is $\rho_0\ebra{\lambda, \ebra{0, 0}}$. For any $\sigma_\star^{(1)} > \sigma \geq \sigma_\star^{(2)}$, there exist three invariant measures determined by $\textbf{m} = \ebra{0, 0}, \ebra{\pm m^{(1)}\ebra{\lambda}, 0}$ respectively. For any $\sigma_\star^{(2)} > \sigma > 0$, there exist nine invariant measures determined by $\textbf{m} = \ebra{0, 0}, \ebra{\pm m^{(1)}\ebra{\lambda}, 0}, \ebra{0, \pm m^{(2)}\ebra{\lambda}}, $ and $\ebra{\pm m^{(1)}\ebra{\lambda}, \pm m^{(2)}\ebra{\lambda}}$.
		\end{enumerate}
	\end{example}
	A description of multiple phase transitions in Example \ref{ex.2} is given in Figure \ref{Fig.2}.
	\begin{figure}[htb]
		\centering
		\includegraphics[height=8cm, width=14cm]{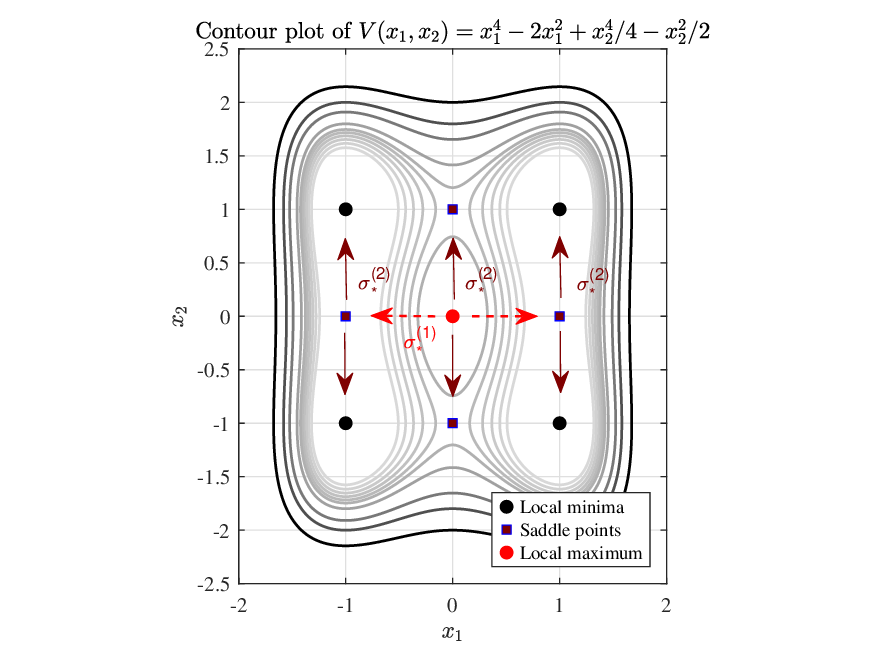}
		\caption{Multiple phase transitions for $V\ebra{x_1, x_2} = x_1^4 - 2x_1^2 + x_2^4/4 - x_2^2/2$. Red dashed arrows denote the first phase transition at temperature $\sigma_\star^{(1)}$; brown arrows denote the second phase transitions at temperature $\sigma_\star^{(2)}$.}\label{Fig.2}
	\end{figure}
	
	\begin{remark}\label{rem.9}
		To ensure that the null space of the linear part of the bifurcation equation is $1$-dimensional, $V_1 \neq V_2$ is assumed. It is numerically observed that if $V_1 = V_2$, all phase transitions occur simultaneously. It is also observed that the deeper the well, the higher the phase transition temperature. In Example \ref{ex.2}, the potential is chosen such that invariant measures are product measures to avoid redundant calculations. The more complicated case of non-product high-dimensional phase transitions is left for further work.
	\end{remark}
	
	\begin{example}\label{ex.4}
		(An asymmetric four-well model with discontinuous phase transitions in $\mathbb{R}^2$) Let $n=2$. Let the potential be symmetric in the $x_1$ direction and asymmetric in the $x_2$ direction
		\begin{equation*}\label{eq.ex.4.1}
			V\ebra{x_1, x_2} = \frac{1}{4}x_1^4 - \frac{1}{2}x_1^2 + \frac{1}{4}x_2^4 - \frac{1}{3}x_2^3 - \frac{1}{2}x_2^2.
		\end{equation*}
		Consider the interaction $W\ebra{x_1, x_2} = \ebra{x_1^2 + x_2^2}/2$ with interaction strength $\kappa = 1$.
		
		Let $\sigma_\star^{(1)}$ and $\sigma_\star^{(2)}$ be critical temperatures in Examples \ref{ex.1} and \ref{ex.3}, respectively. The result is similar to that in the preceding example: when $\sigma > \sigma_\star^{(1)}$, setting $\lambda = -2\kappa/\sigma^2$, the product measure of $\rho_0^{(1)}\ebra{\lambda, 0}\ebra{x_1}$ (the symmetric invariant measure) and $\rho^{(2)}\ebra{\lambda, m_+\ebra{\lambda}}\ebra{x_2}$ (the invariant measure concentrated on the deeper well) is an invariant measure. A continuous phase transition in the $x_1$ direction occurs at $\sigma_\star^{(1)}$. When $\sigma_\star^{(2)} < \sigma < \sigma_\star^{(1)}$, $\rho_{\pm, 0}^{(1)}\ebra{\lambda} \otimes \rho^{(2)}\ebra{\lambda, m_+\ebra{\lambda}}$ are the invariant measures, where $\rho_{-}^{(1)}, \rho_{0}^{(1)}, \rho_{+}^{(1)}$ denote the three invariant measures in Example \ref{ex.1}. At the temperature $\sigma_\star^{(2)}$, the system undergoes a discontinuous phase transition along the $x_2$ direction. If $0 < \sigma < \sigma_{\star}^{(2)}$, there exist at most $9$ invariant measures given by products of the invariant measures shown in Examples \ref{ex.1} and \ref{ex.3}.
	\end{example}
	
	We also investigate a non-quadratic interaction example exhibiting no phase transitions.
	\begin{example}\label{ex.5}
		(Non-existence of phase transitions under cubic interaction) Let $n=1$, and consider the interaction term $W = x^3/3$ with strength $\kappa > 0$ given. Assume that $V$ satisfies one-sided Lipschitz, polynomial growth assumptions, and exhibits growth that is faster than cubic order. Then there is no local phase transition. Indeed, the abstract bifurcation theorems in Appendix \ref{bifur} state that a necessary condition for the emergence of a phase transition point $(\sigma_\star, \rho_\star)$ is
		\begin{equation}\label{eq.eq.1.7}
			-\frac{\sigma_\star^2}{2\kappa} \in \text{spec}(K), \qquad K\omega = \rho_\star \cdot \ebra{W * \omega - \int_{\mathbb{R}}W * \omega \cdot \rho_\star \dif x}.
		\end{equation}
		$\mathcal{T}$ can be proved to be a completely continuous operator on $L^1\ebra{\mathbb{R}, \ebra{1 + |x|^2}^{\frac{3}{2}}}$ by an argument similar to that in Lemma \ref{thm.1}. Thus $K$ is compact. However, using the Hermite-basis representation method of Lemma \ref{lemma.2}, one finds that
		\begin{equation*}\label{eq.eq.1.8}
			\text{spec}(K) = \{0, \sqrt{\Delta_2\ebra{\rho_\star}}\textbf{i}, -\sqrt{\Delta_2\ebra{\rho_\star}}\textbf{i}\}, \qquad \Delta_2\ebra{\rho_\star} := \det \ebra{m_{i+j}}_{0\leq i, j \leq 2} \geq 0.
		\end{equation*}
		Thus $K$ has no real spectral value other than $0$. Thus (\ref{eq.eq.1.7}) is violated, and there is no phase transition.
	\end{example}
	\begin{remark}\label{rem.ex.cubic_inter}
		The spectrum of polynomial interactions may be connected with the Hankel determinant. When $W=x^2/2$, we have shown that the spectrum is related to $\Delta_1 = m_2 - m_1^2$.
	\end{remark}
	
	Finally, Theorem \ref{thm.7} is used to investigate phase transitions under Gaussian attractive interactions. This class of short-range interaction is commonly employed to model the behavior of soft spheres in physics \cite{Sti76, Mladek06}. Unlike the case of long-range quadratic interactions, the mathematical setting here is distinguished by the fact that the linear operator $K$ is not a finite-rank operator.
	\begin{example}\label{ex.6}
		(A Gaussian interaction example with numerical evidence) Let $n=1$. Suppose $V$ is symmetric and satisfies the one-sided Lipschitz, polynomial growth, and weak dissipativity conditions. Let $W(x) = -\exp(-x^2)$ and set $X = L^1(\mathbb{R})$. Then
		
		(I) $\mathcal{T}$ is completely continuous, and for each $\lambda = -2\kappa/\sigma^2 < 0$, there exists a symmetric trivial invariant measures $\rho_0(\lambda)$ possessing a finite second moment.
		
		(II) The necessary condition for the occurrence of phase transitions is described by the following system of equations
		\begin{equation}\label{eq.ex.6.1}
			\mathcal{T}\rho_0\ebra{\lambda_\star} = \rho_0\ebra{\lambda_\star}, \qquad-\frac{\sigma_\star^2}{2\kappa} = \lambda_\star^{-1} \in \text{Spec} (K_{\lambda_\star}),
		\end{equation}
		where, for any $\omega \in L^1\ebra{\mathbb{R}}$,
		$K_{\lambda_\star}\omega = \rho_0 \ebra{\lambda_\star} \cdot \ebra{W *\omega - \int_{\mathbb{R}}W * \omega \cdot \rho_0 \ebra{\lambda_\star} \dif x}. $
		
		In particular, the necessary condition for the occurrence of the first-time phase transition from the family of trivial invariant measures $\{\rho_0(\lambda)\}$ is given by (where $\theta_{\min}(K)$ denotes the smallest eigenvalue of the compact operator $K$)
		$$-\frac{\sigma_\star^2}{2\kappa} = \theta_{\text{min}} (K_{\lambda_\star}).$$
	\end{example}
	
	\begin{remark}
		Since $K$ is not of finite rank when $W(x) = -\exp(-x^2)$, its spectrum is difficult to estimate analytically \cite{LLPW25}. Consequently, we employ a numerical method to illustrate the occurrence of a pitchfork bifurcation associated with the first-time phase transition from the trivial invariant measures. The corresponding source code is available at \url{https://github.com/JunlangHu/MVPTVBT_codes}.
	\end{remark}
	
	The main idea of the numerical method is as follows. Since $K$ maps $L^1(\mathbb{R})$ into $\text{range}(K) \subset L^2\big(\mathbb{R}, \rho_0(\lambda_\star)^{-2} \exp(-x^2)\big) \subset L^1(\mathbb{R})$, a property guaranteed by the dissipativity of $V$ and the boundedness of $W$, it suffices to restrict our spectral analysis to the Hilbert space $L^2\big(\mathbb{R}, \rho_0(\lambda_\star)^{-2} \exp(-x^2)\big)$. This space is spanned by the transformed Hermite basis $\psi_k = \rho_0\ebra{\lambda_\star} \cdot \tilde{h}_k$ (cf. Subsection \ref{aux_lems}). Consequently, equation (\ref{eq.ex.6.1}) is solved numerically using the approximation scheme below
	\begin{equation*}\label{eq.ex.6.2}
		\begin{aligned}
			\rho_{l+1} \gets \mathcal{T}\rho_l, \qquad -\frac{\sigma_\star^2}{2\kappa} = \text{Spec} (P_LK_{\lambda_\star}),
		\end{aligned}
	\end{equation*}
	where the fixed-point iteration is carried out within the subspace of symmetric functions on $\mathbb{R}$, and $P_L$ is the orthogonal projection onto $\text{span}\{\psi_1, \dots, \psi_L\}$ (cf. the convergence analysis in \cite[Theorem 2]{Osb75}). The $(i, j)$-th element of the finite-rank operator $P_L K_{\lambda_\star}$ is given by $\eangle{K \psi_i, \psi_j}_{L^2\ebra{\mathbb{R}, \rho_0\ebra{\lambda_\star}^{-2}\exp(-x^2)}}$.
	
	For $V(x) = x^4/4 - x^2/2$ and $\kappa = 1$, choosing $L$ sufficiently large gives
	$$\lambda_\star \approx -3.173013549816, \qquad \sigma_\star \approx 0.793924189756. $$
	Since $P_L K_{\lambda_\star}$ has a unique simple eigenvalue converging to the infimum of the spectrum of $K_{\lambda_\star}$, with the gap between the lowest and second-lowest eigenvalues being roughly $0.186$, we deduce from \cite{Osb75} that $\theta_{\min}(K_{\lambda_\star})$ is likewise simple. By calculating the left and right eigenvectors of the approximating operator $P_L K_{\lambda_\star}$, the critical projection onto the critical eigenspace can be determined. The numerical criterion for the pitchfork bifurcation is formulated as
	\begin{equation*}\label{eq.ex.6.3}
		\begin{aligned}
			& P \partial_{\lambda, \delta \rho}^2 \tilde{F}\ebra{\lambda_\star, 0}[\tilde{v}_\star] \approx 0.32825058 \neq 0, \qquad P \partial_{\delta\rho, \delta\rho}^2 \tilde{F}\ebra{\lambda_\star, 0} [\tilde{v}_\star, \tilde{v}_\star] \approx -3.4 \cdot 10^{-18} \approx 0, \\
			& P\ebra{1/3\cdot \partial_{\delta \rho, \delta \rho, \delta \rho}^3 \tilde{F}\ebra{\lambda_\star, 0}[\tilde{v}_\star, \tilde{v}_\star, \tilde{v}_\star] + \partial_{\delta\rho, \delta\rho}^2\tilde{F}\ebra{\lambda_\star, 0}[\tilde{v}_\star, 2 \partial_s w\ebra{\lambda_\star, 0}]} \approx 0.45466642 \neq 0.
		\end{aligned}
	\end{equation*}
	Therefore, we numerically observe the occurrence of a pitchfork bifurcation at the smallest eigenvalue of $K_{\lambda_\star}$.
	
	\section{Proof of main theorems}\label{sec.proof_thms}
	The first two subsections in this section provide supporting lemmas, including general results for arbitrary interactions (Lemma \ref{lemma.1}) and more precise results for quadratic interactions (Lemmas \ref{thm.1}, \ref{lemma.2}, \ref{lemma.2.1}). The last subsection provides proofs of the main results based on these auxiliary lemmas.
	
	\subsection{Auxiliary lemmas}\label{aux_lems}
	
	In this subsection, we derive expressions for derivatives of the Gibbs map up to third order at $\rho_0$. Here, $\rho_0$ is an invariant measure and a fixed point of $\mathcal{T}$ around which we analyze the bifurcation phenomenon.
	
	Lemma \ref{lemma.1} follows from direct calculations. Therefore, we omit its proof.
	
	\begin{lemma}[Derivative formulas for the Gibbs map]\label{lemma.1}
		Assume that (H1)--(H3) hold, and $\mathcal{T}\ebra{\lambda, \cdot}$ has Fr\'{e}chet derivatives up to third order on a Banach space $X$. Moreover, for the invariant measure $\rho_0$ and any $\omega, \omega_1, \omega_2, \omega_3 \in X$, we have
		\begin{align}
				\partial_{\rho}\mathcal{T}\ebra{\lambda, \rho_0} [\omega] &= \lambda K\omega := \lambda \cdot \rho_0 \cdot \left(W * \omega - \int_{\mathbb{R}^n} W * \omega \cdot \rho_0 \dif x\right), \label{eq.58.line1}\\
				\ebra{\partial_\lambda \mathcal{T}} \ebra{\lambda, \rho_0} & = \rho_0 \cdot \left(\ebra{V/\kappa + W * \rho_0} - \int_{\mathbb{R}^n} \ebra{V/\kappa + W * \rho_0} \cdot \rho_0 \dif x\right), \\
				\ebra{\partial_{\lambda, \rho}^2 \mathcal{T}}\ebra{\lambda, \rho_0}[\omega] & = K \omega  + \lambda \cdot \rho_0 \cdot \left(\tilde{K}\omega \cdot \tilde{L} - \int_{\mathbb{R}^n} \tilde{K} \omega \cdot \tilde{L} \cdot \rho_0 \dif x\right), \\
				\partial_{\rho, \rho}^2 \mathcal{T} \ebra{\lambda, \rho_0}[\omega_1, \omega_2] & = \lambda^2 \cdot\rho_0 \cdot \left(\tilde{K} \omega_1 \cdot \tilde{K}\omega_2 - \int_{\mathbb{R}^n} \tilde{K}\omega_1 \cdot \tilde{K}\omega_2 \cdot \rho_0 \dif x\right), \label{eq.4.4.1}\\
				\partial_{\rho, \rho, \rho}^3\mathcal{T}\ebra{\lambda, \rho_0} [\omega_1, \omega_2, \omega_3] & = \lambda^3 \cdot \rho_0 \cdot \left(\tilde{K}\omega_1 \cdot \tilde{K}\omega_2 \cdot \tilde{K}\omega_3 - \int_{\mathbb{R}^n} \tilde{K}\omega_1 \cdot \tilde{K}\omega_2 \cdot  \tilde{K}\omega_3 \cdot \rho_0 \dif x\right. \notag\\
				& \quad - \tilde{K}\omega_1 \cdot \ebra{\int_{\mathbb{R}^n}\tilde{K}\omega_2 \cdot \tilde{K}\omega_3 \cdot \rho_0 \dif x} - \tilde{K}\omega_2 \cdot \ebra{\int_{\mathbb{R}^n}\tilde{K}\omega_1 \cdot \tilde{K}\omega_3 \cdot \rho_0 \dif x} \notag \\
				& \quad \left. - \tilde{K}\omega_3 \cdot \ebra{\int_{\mathbb{R}^n}\tilde{K}\omega_1 \cdot \tilde{K}\omega_2 \cdot \rho_0 \dif x}\right),
				\label{eq.58.line-1}
		\end{align}
		where $\lambda = -2\kappa/\sigma^2$, $\tilde{K}\omega := K\omega/\rho_0$, and
		\begin{equation}\label{eq.59}
			\begin{aligned}
				& \tilde{L} := \ebra{V/\kappa + W * \rho_0} - \int_{\mathbb{R}^n} \ebra{V/\kappa + W * \rho_0} \cdot \rho_0 \dif x = \ebra{\partial_\lambda\mathcal{T}}\ebra{\lambda, \rho_0} / \rho_0.
			\end{aligned}
		\end{equation}
	\end{lemma}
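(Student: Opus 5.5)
The plan is to write $\mathcal{T}(\lambda,\rho) = e^{\lambda \Phi(\rho)}/Z(\lambda,\rho)$ with $\Phi(\rho) := V/\kappa + W*\rho$. Then $\log \mathcal{T} = \lambda\Phi(\rho) - \log Z(\lambda,\rho)$, and every derivative will come from differentiating this logarithm. The key structural facts are two. First, $\rho \mapsto W*\rho$ is linear, so $\partial_\rho \Phi[\omega] = W*\omega$ and all higher $\rho$-derivatives of $\Phi$ vanish. Second, the derivatives of $\log Z$ are cumulants of $\Phi$ under the Gibbs measure $\mathcal{T}(\lambda,\rho)$. At the fixed point $\rho_0 = \mathcal{T}(\lambda,\rho_0)$, all expectations are therefore taken with respect to $\rho_0$ itself, which is exactly the form of the stated formulas.

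I will carry out the computation along a curve $\rho_0 + t\omega$, and for mixed derivatives along $\rho_0 + \sum_i t_i\omega_i$.

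\textbf{First derivative.} Write $g = W*\omega$. Then
\[
\frac{d}{dt}\mathcal{T} = \mathcal{T}\cdot\bigl(\lambda g - \lambda\langle g\rangle_{\mathcal{T}}\bigr),
\]
which at $t=0$ gives \eqref{eq.58.line1}. Here $\tilde K\omega = g - \langle g\rangle_{\rho_0}$ is the centered quantity.

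\textbf{Second derivative.} Differentiate the product once more. This gives
\[
\mathcal{T}\Bigl(\lambda^2 \tilde g_1\tilde g_2 - \lambda\,\partial\langle g_2\rangle_{\mathcal{T}}\Bigr),
\qquad \text{with } \partial\langle g_2\rangle_{\mathcal{T}} = \lambda\langle g_2\tilde g_1\rangle.
\]
Since $\langle \tilde g_1\rangle = 0$, we have $\langle g_2\tilde g_1\rangle = \langle\tilde g_1\tilde g_2\rangle$. This yields \eqref{eq.4.4.1}.

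\textbf{Third derivative.} Repeat the same step. The centered product $\tilde g_1\tilde g_2$ picks up derivative terms from:
\begin{itemize}
\item the factor $\mathcal{T}$;
\item the re-centering of each $\tilde g_i$, producing the terms $-\lambda\,\tilde g_k\langle\tilde g_i\tilde g_j\rangle$;
\item the subtracted mean, whose derivative is the third cumulant $\lambda\langle\tilde g_1\tilde g_2\tilde g_3\rangle$ minus correction terms.
\end{itemize}
The terms $\langle\tilde g_i\rangle\langle\tilde g_j\tilde g_k\rangle$ vanish by centering. Collecting what remains gives \eqref{eq.58.line-1}, with the three symmetric pair-terms.

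\textbf{Derivatives in $\lambda$.} $\partial_\lambda\mathcal{T} = \mathcal{T}(\Phi - \langle\Phi\rangle)$, which is $\rho_0\tilde L$ at the fixed point, as in \eqref{eq.59}. For the mixed derivative, differentiate $\lambda\,\mathcal{T}(\tilde g)$ in $\lambda$. The explicit $\lambda$ produces $K\omega$. Differentiating $\mathcal{T}$ and the re-centering $\langle g\rangle_{\mathcal{T}}$ produces $\lambda\,\rho_0\bigl(\tilde K\omega\,\tilde L - \langle \tilde K\omega\,\tilde L\rangle\bigr)$, using once more that centered quantities have zero mean.

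\textbf{Rigor.} The lemma already assumes Fr\'echet differentiability up to third order, so it suffices to compute Gateaux derivatives along directions and identify them. To justify differentiating under the integrals in $Z$ and $\langle\cdot\rangle$, I will use dominated convergence. Under (H2)–(H3), $e^{\lambda\Phi}$ decays faster than any polynomial for $\lambda<0$, while $W*\omega$ grows at most polynomially for $\omega\in X$, so dominating functions are available.

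\textbf{Main obstacle.} The main difficulty is bookkeeping in the third-order term: tracking which cross terms cancel by centering and keeping the sign convention $\tilde F = id - \mathcal{T}$. I would first check the result against the one-dimensional cumulant identity $\partial_t^3\log Z = \kappa_3$.
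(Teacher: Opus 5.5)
Your proposal is correct and is essentially the ``direct calculation'' the paper has in mind; the paper omits the proof as routine. You write $\log\mathcal{T} = \lambda\Phi(\rho) - \log Z(\lambda,\rho)$ and differentiate along $\rho_0+\sum_i t_i\omega_i$, or in $\lambda$ with $\rho$ held fixed. Only at the end do you use $\mathcal{T}(\lambda,\rho_0)=\rho_0$ and $\langle \tilde K\omega\rangle_{\rho_0}=0$. This reproduces all five formulas exactly, including the three pair terms in the third derivative, and since Fr\'echet differentiability is part of the hypothesis, computing these iterated directional derivatives is sufficient.
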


	Now assume that (H1)--(H4) hold. Choose $X = L^1\ebra{\mathbb{R}^n, 1 + |x|^2}$. We state three auxiliary lemmas concerning compactness, spectral, and projection properties of the linearized operators.
	
	In Lemma \ref{thm.1}, we establish compactness and continuity results for the Gibbs map. On a compact manifold, the compactness of the Gibbs map automatically holds (\cite[Theorem 1.2]{CGPS20}). On the whole space, by contrast, the confining property of the potentials is needed to compensate for the lack of compactness. For $\sigma, \kappa > 0$ fixed, we write $\mathcal{T}\rho := \mathcal{T}\ebra{-2\kappa/\sigma^2, \rho}$ for brevity when no confusion can arise.
	\begin{lemma}\label{thm.1}
		Assume that (H1)--(H4) hold. Then $\mathcal{T}$ is continuous and compact on $L^1\ebra{\mathbb{R}^n, 1 + |x|^2}$.
	\end{lemma}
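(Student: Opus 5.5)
The plan is to exploit the finite-dimensional structure produced by (H4). For $W=|x|^2/2$ we have $W*\rho(x)=\tfrac12|x|^2 m_0(\rho)-\langle x,m_1(\rho)\rangle+\tfrac12 m_2(\rho)$. After normalization by $Z$, the terms that do not depend on $x$ cancel, and $m_0(\rho)$ only rescales the quadratic confinement. Hence
\[
\mathcal{T}\rho=\frac{1}{Z(m_0(\rho),m_1(\rho))}\exp\Bigl(\lambda\bigl(V\kappa^{-1}+\tfrac12 m_0(\rho)|x|^2-\langle m_1(\rho),x\rangle\bigr)\Bigr),
\]
so $\mathcal{T}=\Phi\circ M$ factors through the map $M:\rho\mapsto(m_0(\rho),m_1(\rho))\in\mathbb{R}^{1+n}$. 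The map $M$ is bounded and linear on $L^1(\mathbb{R}^n,1+|x|^2)$, because $|m_0(\rho)|+|m_1(\rho)|\le C\|\rho\|_{L^1(1+|x|^2)}$. Continuity and compactness of $\mathcal{T}$ therefore reduce to two facts. First, $\Phi:(a,b)\mapsto Z^{-1}\exp(\lambda(V\kappa^{-1}+\tfrac a2|x|^2-\langle b,x\rangle))$ is continuous into $L^1(1+|x|^2)$ on the relevant parameter set. Second, $\Phi$ maps bounded sets to relatively compact sets. Both follow once $\Phi$ is continuous on bounded sets, since the image of a bounded set in $\mathbb{R}^{1+n}$ under a continuous map is then relatively compact (a bounded set has compact closure, and continuity carries compact sets to compact sets).

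The key step is the continuity of $\Phi$. Recall $\lambda<0$, and (H3) gives $V(x)\ge c|x|^{2+\delta}-C$, obtained by integrating $\langle x,\nabla V\rangle\ge\alpha|x|^{2+\delta}-\beta$ along rays. Then $\lambda(V\kappa^{-1}+\tfrac a2|x|^2-\langle b,x\rangle)\le -c'|x|^{2+\delta}+C(1+|a||x|^2+|b||x|)$. This yields a uniform dominating function $g(x)=C_R e^{-c''|x|^{2+\delta}}$ for all $|a|,|b|\le R$, and $g(1+|x|^2)$ is integrable, since superquadratic growth beats the quadratic term however large $a$ is. The normalizer $Z(a,b)$ is continuous by dominated convergence and strictly positive, so it is bounded below on compact parameter sets. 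Pointwise continuity of the integrand in $(a,b)$ combined with dominated convergence then gives continuity of $\Phi$ into the weighted $L^1$ space. We handle negative $m_0(\rho)$ the same way, as it is harmless thanks to the $|x|^{2+\delta}$ term. Compactness follows: if $(\rho_k)$ is bounded in $X$, then $M\rho_k$ is bounded in $\mathbb{R}^{1+n}$, so a subsequence converges, and continuity of $\Phi$ makes $\mathcal{T}\rho_{k}$ converge in $X$.

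The main obstacle is making this uniform domination rigorous from the one-sided assumption (H3). One has to deduce the lower bound on $V$ itself, not just on its radial derivative, and also check that $Z$ stays away from zero and infinity uniformly. If the paper works on the affine set $m_0=1$ (probability densities, per Remark~\ref{rem.3}), the dependence on $a$ disappears and the estimate simplifies, but the same argument covers general $\rho\in X$.
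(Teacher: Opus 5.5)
Your argument is correct, and it takes a genuinely different route from the paper for the compactness part. For continuity the paper does what you do: the moments $m_0,m_1$ converge, the Gibbs densities converge pointwise, and dominated convergence (using Lemma~\ref{lem.1}) passes to the limit in $Z$ and then in the weighted $L^1$ norm. For compactness, however, the paper verifies the three Fr\'{e}chet--Kolmogorov--Riesz conditions (Theorem~\ref{thm.2}) for the family $(1+|x|^2)\mathcal{T}\rho_l$:
\begin{itemize}
\item uniform $L^1$ bounds;
\item tail control through uniformly bounded higher moments;
\item uniform small translations through an $L^1$ bound on $\nabla\bigl((1+|x|^2)\mathcal{T}\rho_l\bigr)$, which is where the polynomial growth (H2) of $\nabla V$ is used.
\end{itemize}

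You instead observe that under (H4) the Gibbs map factors as $\mathcal{T}=\Phi\circ M$ through the bounded finite-rank map $M\rho=(m_0(\rho),m_1(\rho))$. Compactness then reduces to Heine--Borel in $\mathbb{R}^{1+n}$ plus continuity of $\Phi$. That continuity needs only the coercive bound $V\ge c|x|^{2+\delta}-C$, a uniform dominating function on parameter balls, and dominated convergence. Your route therefore avoids both the gradient estimate and (H2). As a by-product it shows that the linearization $\lambda K=\partial_\rho\mathcal{T}(\lambda,\rho_0)=D\Phi(M\rho_0)\circ M$ has rank at most $n+1$, consistent with Lemma~\ref{lemma.2}(1).

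The price is generality. Your reduction extends to polynomial interactions (Remark~\ref{rem.5}, Example~\ref{ex.5}) by adding higher moments to $M$. It does not extend to kernels such as the Gaussian interaction of Example~\ref{ex.6}, where $W*\rho$ is not determined by finitely many moments. There the paper reuses its Fr\'{e}chet--Kolmogorov--Riesz argument ("proceeding as in the proof of Lemma~\ref{thm.1}").

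One small technical point about the coercivity step, which you flagged. Integrating $\langle x,\nabla V\rangle\ge\alpha|x|^{2+\delta}-\beta$ naively along $t\mapsto tx$ from $t=0$ produces a non-integrable $-\beta/t$ term. You should first cut off a neighbourhood of the origin, either by bounding $V$ on the unit sphere by continuity or by using (H1) as in Lemma~\ref{lem.1}. Then integrate only over $t\in[1/|x|,1]$ for $|x|\ge 1$. This gives $V(x)\ge \frac{\alpha}{2+\delta}|x|^{2+\delta}-\beta\log|x|-C$, which is enough for your uniform domination.
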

	\begin{proof}\label{proof.2}
		We first show that for any sequence $\rho_j \to \rho$ in $L^1\ebra{\mathbb{R}^n, 1 + |x|^2}$, we have $\mathcal{T} \rho_j \to \mathcal{T} \rho$ in the same norm. Note that
		\begin{equation*}\label{eq.18.1}
			\int_{\mathbb{R}^n} |\rho_j - \rho| \cdot \ebra{1 + |x|^2}^{1/2} \dif x \leq \int_{\mathbb{R}^n} |\rho_j - \rho| \cdot \ebra{1+|x|^2}\dif x.
		\end{equation*}
		Hence, $\lim_{j \to \infty}m_0\ebra{\rho_j} = m_0\ebra{\rho} \text{ and } \lim_{j \to \infty} m_1\ebra{\rho_j} = m_1\ebra{\rho}$, which imply that for any $x \in \mathbb{R}^n$,
		\begin{equation}\label{eq.18}
			\begin{aligned}
				& \lim_{j \to \infty}\exp \ebra{\ebra{-V - \kappa \cdot \ebra{m_0\ebra{\rho_j} \cdot \frac{1}{2}|x|^2 - \eangle{m_1 \ebra{\rho_j}, x}}}\cdot \frac{2}{\sigma^2}}  \\
				&  = \exp \ebra{\ebra{-V - \kappa \cdot \ebra{m_0 \ebra{\rho}\cdot \frac{1}{2}|x|^2 - \eangle{m_1 \ebra{\rho}, x}}}\cdot \frac{2}{\sigma^2}}. \\
			\end{aligned}
		\end{equation}
		By Lemma \ref{lem.1} and the dominated convergence theorem, we have
		\begin{equation}\label{eq.18.1.1}
			\begin{aligned}
				\lim_{j \to \infty} \tilde{Z} \ebra{\rho_j, \kappa, \sigma}  = \tilde{Z}\ebra{\rho, \kappa, \sigma},
			\end{aligned}
		\end{equation}
		where $$
		\tilde{Z}\ebra{\rho, \kappa, \sigma} = \int_{\mathbb{R}^n} \exp \ebra{\ebra{-V - \kappa \cdot \ebra{m_0\ebra{\rho} \cdot |x|^2/2 - \eangle{m_1 \ebra{\rho}, x}}\cdot 2/\sigma^2}} \dif x.
		$$
		Combining (\ref{eq.18}) and (\ref{eq.18.1.1}), we obtain, for any $x \in \mathbb{R}^n$, $\mathcal{T} \rho_j \ebra{x}\to \mathcal{T} \rho\ebra{x}$. Both $\{\mathcal{T}\rho_j\}_{j \in \mathbb{N}}$ and $\{|\cdot|^2\mathcal{T}\rho_j\}_{j \in \mathbb{N}}$ decay exponentially as $|x|\to\infty$ by Lemma \ref{lem.1} and (H2). Consequently, $\mathcal{T}\rho_j \to \mathcal{T}\rho$ in $L^1\ebra{\mathbb{R}^n, 1 + |x|^2}$, again by the dominated convergence theorem.
		
		Next we show that $\mathcal{T}$ is compact on $L^1\ebra{\mathbb{R}^n, 1 + |x|^2}$ by verifying conditions in the Fr\'{e}chet-Kolmogorov-Riesz compactness theorem (see Theorem \ref{thm.2} for details). Take any sequence $\{\rho_l\}_{l \in \mathbb{N}}$ bounded in $L^1\ebra{\mathbb{R}^n, 1 + |x|^2}$. For any $x \in \mathbb{R}^n$, we have
		\begin{align}
				W * \rho_l\ebra{x} & \geq m_0\ebra{\rho_l}/2 \cdot |x|^2 - |m_1 \ebra{\rho_l}| \cdot |x| + m_2\ebra{\rho_l}/2 \notag\\
				& \geq -\ebra{\sup_{l \in \mathbb{N}} \norm{\rho_l}_{L^1\ebra{\mathbb{R}^n}} + 1}/2 \cdot |x|^2 -  C(\sup_{l \in \mathbb{N}} m_2\ebra{|\rho_l|}/2, \sup_l \norm{\rho_l}_{L^1\ebra{\mathbb{R}^n}}), \label{eq.25.line1}\\
				W * \rho_l\ebra{x} & \leq \ebra{\sup_{l \in \mathbb{N}} \norm{\rho_l}_{L^1\ebra{\mathbb{R}^n}} + 1} /2 \cdot |x|^2 +  \sup_{l \in \mathbb{N}} m_2\ebra{|\rho_l|}/2. \label{eq.25.line-1}
		\end{align}
		Then it follows from (H2), Lemma \ref{lem.1}, and (\ref{eq.25.line1})-(\ref{eq.25.line-1}) that $\{\mathcal{T}\rho_l\}_{l \in \mathbb{N}}$ and $\{|\cdot|^2 \mathcal{T}\rho_l\}_{l \in \mathbb{N}}$ are bounded in the $L^1$ norm.
		Note that
		\begin{align}
				\sup_{l \in \mathbb{N}} \int_{\mathbb{R}^n \setminus B_R\ebra{0}} |\mathcal{T}\rho_l| \dif x & = \sup_{l \in \mathbb{N}} \int_{\mathbb{R}^n \setminus B_R\ebra{0}} |x|^{-1} \cdot |x| \cdot|\mathcal{T}\rho_l| \dif x \notag\\
				& \leq R^{-1} \cdot \sup_{l \in \mathbb{N}} m_2\ebra{\mathcal{T} \rho_l}^{1/2} \to 0 \quad \ebra{R \to \infty}. \label{eq.26}
		\end{align}
		Hence, $\{\mathcal{T}\rho_l\}_{l \in \mathbb{N}}$ is uniformly integrable. By Lemma \ref{lem.1} and (\ref{eq.25.line1}), (\ref{eq.25.line-1}), $\{|\cdot|^q \mathcal{T}\rho_l\}_{l \in \mathbb{N}}$ is also $L^1$-bounded uniformly in $l$ for any $q \geq 3$. Since the boundedness of $q$-order moment implies uniform integrability of lower order, similar to (\ref{eq.26}), we obtain the uniform integrability of $\{|\cdot|^2 \mathcal{T}\rho_l\}_{l \in \mathbb{N}}$. We have the following estimate for the gradient
		\begin{equation*}\label{eq.27}
			\begin{aligned}
				\nabla\ebra{\ebra{1+|\cdot|^2}\mathcal{T}\rho_l} \ebra{x} & = 2x \cdot \mathcal{T}\rho_l\ebra{x} + \ebra{1 + |x|^2} \nabla \mathcal{T}\rho_l\ebra{x} \\
				& \leq \left(2|x| + \frac{2}{\sigma^2}\cdot (1 + |x|^2) \cdot C(x)\right)\cdot \mathcal{T}\rho_l\ebra{x}
			\end{aligned}
		\end{equation*}
		with
		$$
		C(x) = |\nabla V|\ebra{x} + \kappa |x| \cdot \sup_{l\in\mathbb{N}}\norm{\rho_l}_{L^1\ebra{\mathbb{R}^n}} + \kappa\cdot \sup_{l\in \mathbb{N}}|m_1\ebra{\rho_l}|.
		$$
		By the assumptions, $V$ can be controlled by a polynomial. Thus $\norm{\nabla\ebra{\ebra{1+|\cdot|^2}\mathcal{T}\rho_l}}$ is bounded in $L^1$ norm uniformly in $l$. Observe that
		\begin{equation*}
			\begin{aligned}
				\norm{\ebra{\tau_h - id}\circ \ebra{\ebra{1+|x|^2}\mathcal{T}\rho_l}}_{L^1\ebra{\mathbb{R}^n}} & = \norm{\int_0^1 \eangle{\nabla\ebra{\ebra{1+|\cdot|^2}\mathcal{T}\rho_l} \ebra{x - th}, h} \dif t}_{L^1\ebra{\mathbb{R}^n}}\\
				& \leq \norm{\nabla\ebra{\ebra{1+|\cdot|^2}\mathcal{T}\rho_l}}_{L^1\ebra{\mathbb{R}^n}}\cdot |h|,
			\end{aligned}
		\end{equation*}
		which, along with the above estimate, implies the uniform small oscillation property.
		
		We conclude that $\mathcal{T}$ is a completely continuous operator on $L^1\ebra{\mathbb{R}^n, 1 + |x|^2}$.
	\end{proof}
	
	Let $\rho_0$ be the invariant measure under investigation. Since $\lambda K$ is the linear part of $\mathcal{T}\ebra{\lambda, \cdot}$, \cite[Lemma 3.1.12]{BT03} implies that $K$ is a compact operator. By classical operator theory, all non-zero spectra of $K$ consist of eigenvalues, i.e., $K \tilde{v} = \lambda^{-1} \tilde{v}$. Since $K$ maps $L^1(\mathbb{R}^n, 1+|x|^2)$ to functions that decay exponentially fast, the form of eigenfunctions implies that it suffices to restrict our attention to a proper Hilbert subspace of $L^1(\mathbb{R}^n, 1+|x|^2)$ that contains the elements of $\operatorname{range} K$ and is equipped with a countable orthogonal basis. We choose the subspace to be $L^2\ebra{\mathbb{R}^n, \rho_0^{-2}\exp\ebra{-|x|^2}}$. 
	
	\begin{remark}\label{explain}
		We note that $K: L^1(\mathbb{R}^n, 1+|x|^2) \to L^2\ebra{\mathbb{R}^n, \rho_0^{-2} \exp\ebra{-|x|^2}} \subset L^1\ebra{\mathbb{R}^n, 1 + |x|^2}$. On the one hand, for any $\omega \in L^1\ebra{\mathbb{R}^n, 1 + |x|^2}$, we have
		\begin{equation*}\label{eq.63}
			\begin{aligned}
				\norm{K\omega}_{L^2\ebra{\mathbb{R}^n,\ \rho_0^{-2}\exp\ebra{-|x|^2}}}^2
				& = \int_{\mathbb{R}^n} \ebra{W * \omega - \int_{\mathbb{R}^n} W * \omega \cdot \rho_0 \dif x}^2 \cdot \exp\ebra{-|x|^2} \dif x < \infty.
			\end{aligned}
		\end{equation*}
		The last inequality follows from the fact that $W * \omega$ can be bounded by a quadratic polynomial. Thus $K \ebra{L^1\ebra{\mathbb{R}^n, 1 + |x|^2}} \subset L^2\ebra{\mathbb{R}^n, \rho_0^{-2} \exp\ebra{-|x|^2}}$. On the other hand, for any $f \in L^2\ebra{\mathbb{R}^n, \rho_0^{-2} \exp\ebra{-|x|^2}}$, the hyper-dissipative condition, together with Lemma \ref{lem.1}, yields
		\begin{equation*}\label{eq.64}
			\begin{aligned}
				\infty > \norm{f}_{L^2\ebra{\mathbb{R}^n,\ \rho_0^{-2}\exp\ebra{-|x|^2}}}^2 & = \int_{\mathbb{R}^n} f^2 \cdot \exp \ebra{\ebra{V + \kappa\cdot (|x|^2/2 - \eangle{m_1\ebra{\rho_0}, x})}\cdot 4/\sigma^2 - |x|^2} \dif x \cdot \tilde{Z}\ebra{\kappa, \sigma}^2 \\
				& \geq \int_{\mathbb{R}^n} f^2 \cdot \exp\ebra{C_1 \cdot |x|^{2+\delta} + C_2} \dif x \qquad \text{with constants }C_1 > 0, \ C_2 \in \mathbb{R}.
			\end{aligned}
		\end{equation*}
		Hence, this weighted $L^2$ estimate implies $L^2\ebra{\mathbb{R}^n, \rho_0^{-2}\exp\ebra{-|x|^2}} \subset L^1\ebra{\mathbb{R}^n, 1 + |x|^2}$.
	\end{remark}
	
	We aim to express the compact operator $K$ in an explicit matrix form, thereby converting the spectral problem into a computationally tractable one. To achieve this, we need to explicitly construct an orthogonal basis for $L^2\ebra{\mathbb{R}^n, \rho_0^{-2}\exp\ebra{-|x|^2}}$. Note that the Fourier basis on $\mathbb{T}^d$ used in \cite{CGPS20} is no longer suitable for the whole space. It is well known that $\{\tilde{H}_\alpha\}_{\alpha \in \mathbb{N}^n}$, the normalized Hermite orthogonal polynomials over $\mathbb{R}^n$, forms a complete orthogonal system on $L^2\ebra{\mathbb{R}^n, \exp\ebra{-|x|^2}}$. To keep the presentation self-contained, we provide a brief introduction to these polynomials in Appendix \ref{appendix.D}. Consider the following transformation
	\begin{equation*}\label{eq.68}
		\iota: f \mapsto \rho_0 \cdot f, \quad
		L^2\ebra{\mathbb{R}^n, \exp\ebra{-|x|^2}} \to L^2\ebra{\mathbb{R}^n, \rho_0^{-2}\exp\ebra{-|x|^2}}.
	\end{equation*}
	When $\rho_0 \ebra{x} > 0$ for any $x \in \mathbb{R}^n$, the operator $\iota$ is one-to-one and onto. Moreover, from the equality
	\begin{equation*}\label{eq.69}
		\norm{\iota f}_{L^2\ebra{\mathbb{R}^n,\ \rho_0^{-2}\exp\ebra{-|x|^2}}} = \norm{f}_{L^2\ebra{\mathbb{R}^n, \ \exp\ebra{-|x|^2}}},
	\end{equation*}
	we know that $\iota$ is an isometric isomorphism between the two spaces. By applying the transformation $\iota$, \hl{we obtain the following normalized orthogonal basis for the weighted space $L^2\ebra{\mathbb{R}^n, \rho_0^{-2}\exp\ebra{-|x|^2}}$}: $\{\psi_k\}_{k \in \mathbb{N}} := \{\rho_0\cdot\tilde{h}_k\}_{k \in \mathbb{N}}$ when $n = 1$, and $\{\Psi_\alpha\}_{\alpha \in \mathbb{N}^n} := \{\rho_0 \cdot \tilde{H}_\alpha\}_{\alpha \in \mathbb{N}^n}$ in higher dimensions. Their explicit expressions when $n=1$ are
	\begin{equation}\label{eq.70}
		\begin{aligned}
			& \psi_0\ebra{x} = \rho_0 \cdot \pi^{-1/4}, \quad \psi_1\ebra{x} = \rho_0 \cdot \pi^{-1/4}\cdot \ebra{\sqrt{2}x}, \\
			& \psi_2\ebra{x} = \rho_0 \cdot \pi^{-1/4} \cdot \ebra{\sqrt{2}x^2 - \sqrt{2}/2}, \ \dots
		\end{aligned}
	\end{equation}
	and for general dimension $n$,
	\begin{equation}\label{eq.71}
		\begin{aligned}
			& \Psi_0 = \rho_0 \cdot \pi^{-n/4}, \qquad\qquad\qquad\ \Psi_{e_i} = \rho_0 \cdot \pi^{-n/4} \cdot \ebra{\sqrt{2} x_i}, \\
			&\Psi_{e_i + e_j} = \rho_0\cdot\pi^{-n/4}\cdot \ebra{2x_ix_j}, \quad\Psi_{2e_i} = \rho_0 \cdot \pi^{-n/4} \cdot \ebra{\sqrt{2}x_i^2 - \sqrt{2}/2}, \ \dots
		\end{aligned}
	\end{equation}
	where $i \neq j$ and $e_i$ denotes the $i$-th unit vector in $\mathbb{R}^n$. For convenience, for $\alpha = \ebra{\alpha_1, \dots, \alpha_n} \in \mathbb{N}^n$, define the indexed moments by
	\begin{equation*}\label{eq.71.1}
		m_\alpha\ebra{\rho_0} := \int_{\mathbb{R}^n} \ebra{\prod_{i = 1}^n x_{i}^{\alpha_i}} \cdot \rho_0 \dif x_1 \cdots \dif x_n.
	\end{equation*}
	
	Consider the ``covariance matrix" generated by $\rho_0$:
	\begin{equation*}\label{eq.71.2}
		\text{Cov}\ \ebra{\rho_0} := \begin{pmatrix}
			m_{e_1+e_1} - m_{e_1}\cdot m_{e_1} & \cdots & m_{e_n + e_1} - m_{e_n} \cdot m_{e_1}\\
			\vdots & \cdots & \vdots\\
			m_{e_1 + e_n} - m_{e_1} \cdot m_{e_n} & \cdots & m_{e_n + e_n} - m_{e_n} \cdot m_{e_n}
		\end{pmatrix}_{n \times n}\ebra{\rho_0},
	\end{equation*}
	Since invariant measures for the gradient flow system (\ref{eq.1}) are absolutely continuous with respect to the Lebesgue
	measure in $\mathbb{R}^n$, $\text{Cov}\ebra{\rho_0}$ has no zero eigenvalues. Since the real symmetric matrix $\text{Cov}\ebra{\rho_0}$ is positive semidefinite, we write its eigenvalues as $0 < \theta_1 \leq \dots \leq \theta_n$ (counted with multiplicity), and the corresponding linearly independent orthogonal unit eigenvectors are $v_1, \dots, v_n$. Assume also that there are $m \leq n$ eigenspaces and a subsequence $\{n_k\}_{k \in \{0, \dots, m\}} \subset \{0, \dots, n\}$, $n_0 = 0, \ n_m = n$ such that
	\begin{equation*}\label{eq.71.3}
		E_1 \oplus E_2 \oplus \cdots \oplus E_m = \mathbb{R}^n, \qquad E_k = \text{span}\left\{v_{n_{k-1}+1}, \ \dots, \ v_{n_{k}}\right\}.
	\end{equation*}
	Finally, we define ($\ebra{v_i}_j$ denotes the $j$-th component of the vector $v_i$)
	\begin{equation*}\label{eq.73.1}
		\begin{aligned}
			& \tilde{v}_1 := \ebra{-\sqrt{2}m_{e_1}\ebra{v_1}_1 - \cdots - \sqrt{2}m_{e_n} \ebra{v_1}_n} \cdot \Psi_0 +  \ebra{v_1}_1 \cdot \Psi_{e_1} + \cdots + \ebra{v_1}_n \cdot \Psi_{e_n},\\
			& \qquad \qquad \qquad \qquad \qquad \qquad \qquad \qquad \dots, \\
			& \tilde{v}_n := \ebra{-\sqrt{2}m_{e_1}\ebra{v_n}_1 - \cdots - \sqrt{2}m_{e_n} \ebra{v_n}_n} \cdot \Psi_0 +  \ebra{v_n}_1 \cdot \Psi_{e_1} + \cdots + \ebra{v_n}_n \cdot \Psi_{e_n}.
		\end{aligned}
	\end{equation*}
	as candidates for eigenvectors of $K$. We now state the main lemma on the spectrum of the operator $K$.
	\begin{lemma}[Spectrum structure for $K$]\label{lemma.2}
		Assume that (H1) -- (H4) hold. Then 
		\begin{enumerate}
			\item The compact operator $K$ maps $L^1\ebra{\mathbb{R}^n, 1 + |x|^2}$ into $\text{span}\{\Psi_{\alpha}: |\alpha| := \sum_{i=1}^n |\alpha_i| \leq 2, \alpha \in \mathbb{N}^n\}$.
			\item All nonzero spectral values of $K$ are semisimple eigenvalues of $-\text{Cov}\ebra{\rho_0}$
			\begin{equation}\label{eq.72}
				\left\{-\theta_1, \ \dots, \ -\theta_n\right\}.
			\end{equation}
			For any one of them, the algebraic multiplicity equals the geometric multiplicity. Their corresponding finite-dimensional eigenspaces are $\tilde{E}_1, \tilde{E}_2, \dots, \tilde{E}_m$, where
			\begin{equation*}\label{eq.73}
				\tilde{E}_k = \text{span}\left\{\tilde{v}_{n_{k-1} + 1}, \ \dots, \ \tilde{v}_{n_k}\right\}.
			\end{equation*}
			\item In particular, when $n=1$, $-m_2\ebra{\rho_0} + m_1\ebra{\rho_0}^2$ and $\text{span}\{-\sqrt{2}m_1\ebra{\rho_0} \cdot \psi_0 + \psi_1\}$ are the non-zero spectrum and the corresponding simple eigenspace, respectively.
		\end{enumerate}
	\end{lemma}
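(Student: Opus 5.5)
The plan is to compute $K$ explicitly under (H4), observe that it has finite rank, and then reduce the spectral problem for $K$ to a linear-algebra problem on a finite-dimensional invariant subspace. Throughout, $\omega \in X = L^1(\mathbb{R}^n, 1+|x|^2)$, so $m_0(\omega)$, $m_1(\omega)$ and $m_2(\omega)$ are finite.

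\emph{Step 1 (range of $K$).} With $W(x)=|x|^2/2$ we expand
\[
W*\omega(x)=\tfrac12 m_0(\omega)|x|^2-\langle m_1(\omega),x\rangle+\tfrac12 m_2(\omega).
\]
The constant term $\tfrac12 m_2(\omega)$ cancels after subtracting the $\rho_0$-average. This gives
\[
K\omega=\rho_0\Big(\tfrac12 m_0(\omega)\big(|x|^2-m_2(\rho_0)\big)-\big\langle m_1(\omega),\,x-m_1(\rho_0)\big\rangle\Big).
\]
Hence $\operatorname{range}K\subset U:=\operatorname{span}\{\rho_0(|x|^2-m_2(\rho_0)),\ \rho_0(x_k-m_{e_k}(\rho_0)):k=1,\dots,n\}$. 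Rewriting $x_k$ and $|x|^2$ through $\Psi_0,\Psi_{e_k},\Psi_{2e_k}$ from \eqref{eq.71} shows that $U\subset\operatorname{span}\{\Psi_\alpha:|\alpha|\le2\}$, which is item (1). In particular $K$ has finite rank, so it is compact, and every nonzero spectral value is an eigenvalue whose generalized eigenvectors lie in $U$.

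\emph{Step 2 (matrix of $K$ on $U$).} Set $u_0:=\tfrac12\rho_0(|x|^2-m_2(\rho_0))$ and $u_b:=\rho_0\langle b,x-m_1(\rho_0)\rangle$ for $b\in\mathbb{R}^n$. Every element of $U$ has zero mass, so $m_0(u_0)=m_0(u_b)=0$. Next, $m_1(u_b)=\operatorname{Cov}(\rho_0)\,b$, while $m_1(u_0)=:c$ is a vector of centred third-order moments. The formula from Step 1 then gives
\[
Ku_b=-u_{\operatorname{Cov}(\rho_0)b},\qquad Ku_0=-u_c.
\]
So in the decomposition $U=\operatorname{span}\{u_0\}\oplus\{u_b\}$, the matrix of $K$ is block triangular, with diagonal blocks $0$ and $-\operatorname{Cov}(\rho_0)$.

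\emph{Step 3 (eigenvalues and eigenvectors).} Any nonzero eigenvalue of $K|_U$ is therefore an eigenvalue $-\theta_i$ of $-\operatorname{Cov}(\rho_0)$. Conversely, the functions $u_{v_i}$ are genuine eigenvectors of $K$ with $Ku_{v_i}=-\theta_i u_{v_i}$. Up to the normalisation $\sqrt2\pi^{-n/4}$, they coincide with the $\tilde v_i$ defined before the lemma, since $\Psi_{e_k}-\sqrt2 m_{e_k}\Psi_0=\sqrt2\pi^{-n/4}\rho_0(x_k-m_{e_k})$. Positivity $\theta_1>0$ follows from absolute continuity of $\rho_0$: if $\langle b,\operatorname{Cov}\,b\rangle=0$ then $\rho_0$ would be supported on a hyperplane.

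\emph{Step 4 (semisimplicity).} This is the step I expect to need the most care. Fix $\mu\ne0$ and suppose $(K-\mu)^2w=0$ with $w\in X$. Then $(K-\mu)w$ lies in $\ker(K-\mu)\subset U$, so $w=\mu^{-1}\big(Kw-(K-\mu)w\big)\in U$. It therefore suffices to work with $K|_U$. On $U$ the $u_0$-coordinate of $(K-\mu)^jw$ is $(-\mu)^j$ times that of $w$, because $K$ maps $U$ into $\{u_b\}$. Consequently a generalized eigenvector for $\mu\neq0$ has vanishing $u_0$-coordinate, i.e.\ $w=u_b$ for some $b$. Then $(\operatorname{Cov}+\mu)^2b=0$ together with the symmetry, hence diagonalizability, of $\operatorname{Cov}(\rho_0)$ gives $(\operatorname{Cov}+\mu)b=0$. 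Thus algebraic and geometric multiplicities agree, and the eigenspaces are $\tilde E_k$, proving item (2).

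\emph{Item (3).} The case $n=1$ is read off directly: $\operatorname{Cov}=m_2-m_1^2$, and the eigenvector is $\rho_0(x-m_1)\propto-\sqrt2m_1\psi_0+\psi_1$.
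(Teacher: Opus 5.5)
Your proof is correct, but it organizes the finite-dimensional reduction differently from the paper.

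\textbf{The paper's route.} It represents $K$ on the full $(n^2/2+3n/2+1)$-dimensional space $\operatorname{span}\{\Psi_\alpha:|\alpha|\le 2\}$ in the transformed Hermite basis, including the mixed modes $\Psi_{e_i+e_j}$, whose rows vanish. Explicit row and column operations then give the characteristic polynomial $\theta^{n^2/2+n/2+1}\det(\theta\,\mathrm{id}+\mathrm{Cov}(\rho_0))$, and the eigenvectors come from a Gaussian elimination whose details are omitted.

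\textbf{Your route.} You work on the $(n+1)$-dimensional subspace $U$ of centred polynomials times $\rho_0$, which already contains $\operatorname{range}K$. The identity $Ku_b=-u_{\mathrm{Cov}(\rho_0)b}$ shows that $K$, restricted to the invariant subspace $\{u_b\}$, is conjugate to $-\mathrm{Cov}(\rho_0)$ via $b\mapsto u_b$. That map is injective because $\rho_0>0$, which is the one step you use without stating it. The spectrum, the eigenvectors $\tilde v_i=\sqrt2\pi^{-n/4}u_{v_i}$ and the block-triangular structure then follow with no determinant manipulation.

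\textbf{Semisimplicity.} Your Step 4 proves ascent one on all of $X$. You first push $\ker(K-\mu)^2$ into $U$, then kill the $u_0$-coordinate, then use the symmetry of $\mathrm{Cov}(\rho_0)$. The paper states that algebraic and geometric multiplicities agree, but supports this only implicitly, by matching multiplicities in the characteristic polynomial of the restriction with the eigenvector count. That matching tacitly needs your observation that generalized eigenvectors for $\mu\neq0$ lie in $\operatorname{range}K$.

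\textbf{What each approach buys.} The paper's Hermite-basis representation is general machinery that it reuses for the cubic interaction in Example~\ref{ex.5}, the degree-$p$ extension in Remark~\ref{rem.5}, and the Galerkin approximation for the Gaussian kernel in Example~\ref{ex.6}. Your argument is tailored to the quadratic structure, though it would adapt to polynomial kernels by using centred polynomials of degree at most $p$. It is shorter, coordinate-free, and makes clear why $\mathrm{Cov}(\rho_0)$ appears.
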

	
	\begin{proof}\label{proof.10}
		For brevity, we only outline the proof here. Please check Appendix \ref{appendix.D} for a detailed proof.
		
		Under assumption (H4), for any $\omega \in L^1\ebra{\mathbb{R}^n, 1+|x|^2}$, the operator $K$ takes the following form
		\begin{equation*}\label{eq.74}
			\begin{aligned}
				K \omega  & = \rho_0 \cdot \ebra{W * \omega - \int_{\mathbb{R}^n} W * \omega \cdot \rho_0 \dif x} \\
				& = \rho_0 \cdot \ebra{\frac{1}{2}m_0\ebra{\omega} \cdot |x|^2 - \eangle{m_1\ebra{\omega}, x} - \frac{1}{2}m_0\ebra{\omega}\cdot m_2\ebra{\rho_0} + \eangle{m_1\ebra{\omega}, m_1\ebra{\rho_0}}}.
			\end{aligned}
		\end{equation*}
		Then $K(L^1\ebra{\mathbb{R}^n, 1+|x|^2}) \subset \text{span}\{\Psi_\alpha: |\alpha| \leq 2\}$, and the eigenfunctions must be products of $\rho_0$ and polynomials of degree less than or equal to two.
		
		Next, we represent $K$ as a $\ebra{n^2/2 + 3n/2 + 1}\times \ebra{n^2/2 + 3n/2 + 1}$ matrix with respect to the basis $\{\Psi_\alpha\}_{\alpha \in \mathbb{N}^n}$, and then apply a standard but lengthy linear algebra procedure to compute the eigenvalues and eigenvectors. The calculations in Appendix \ref{appendix.D} reveal the relation of $K$ to $\text{Cov}\ebra{\rho_0}$. In fact, after appropriate eliminations via row and column transformations, the characteristic polynomial is found to be
		\begin{equation*}\label{eq.90}
			\theta^{n^2/2+n/2+1} \cdot \text{det}\ebra{\theta \textbf{id}_{n \times n} + \text{Cov}\ebra{\rho_0}}.
		\end{equation*}
		This implies (\ref{eq.72}). The eigenvectors of $K$ can be obtained analogously.
	\end{proof}
	
	By now we have shown that $id - \lambda_i K$ ($\lambda_i := -\theta_i^{-1}$) is a Fredholm operator with index $0$, ascent $1$, and null space $\text{span}\{\tilde{v}_{n_{k-1}+1},\  \dots,\ \tilde{v}_{n_k}\}$ if $n_{k-1}< i \leq n_k$. The function space thus admits a direct sum decomposition
	\begin{equation*}\label{eq.90.1}
		L^1\ebra{\mathbb{R}^n, 1 + |x|^2} = \text{null } \ebra{id - \lambda_i K} \oplus \text{range } \ebra{id - \lambda_i K}.
	\end{equation*}
	
	Under assumption (H4), $K: \ L^1\ebra{\mathbb{R}^n, 1 + |x|^2} \to L^1\ebra{\mathbb{R}^n, 1 + |x|^2}$ is given by
	\begin{equation*}\label{eq.90.2}
		K\omega = \rho_0 \cdot \ebra{\sum_{k=1}^n \frac{1}{2} m_0\ebra{\omega}\cdot \ebra{
				x_k^2- m_{2 e_k}\ebra{\rho_0}} - m_{e_k}\ebra{\omega} \cdot \ebra{x_k - m_{e_k}\ebra{\rho_0}}}.
	\end{equation*}
	Then its dual operator $K^*: \ebra{L^1\ebra{\mathbb{R}^n, 1 + |x|^2}}^* \to \ebra{L^1\ebra{\mathbb{R}^n, 1 + |x|^2}}^*$ is defined by
	\begin{equation*}\label{eq.90.3}
		K^* u = \sum_{k=1}^n -\ebra{m_{e_k} \ebra{u \rho_0} - m_{e_k} \ebra{\rho_0}\cdot m_0\ebra{u \rho_0}} \cdot x_k + \frac{1}{2}\ebra{m_{2e_k}\ebra{u \rho_0} - m_{2e_k}\ebra{\rho_0} \cdot m_0\ebra{u \rho_0}},
	\end{equation*}
	where the dual space can be identified as
	\begin{equation*}\label{eq.90.4.1}
		L^\infty\ebra{\mathbb{R}^n, \ebra{(1+|x|^2}^{-1}} := \{u: \ u/(1+|x|^2) \text{ is essentially bounded}\},
	\end{equation*}
	via the dual relation $\eangle{\cdot, \cdot}$
	\begin{equation}\label{eq.90.5}
		\eangle{w, u} = \int_{\mathbb{R}^n} w \cdot u \dif x, \qquad w \in L^1\ebra{\mathbb{R}^n, 1+|x|^2}, \ u \in L^\infty\ebra{\mathbb{R}^n, \ebra{1+|x|^2}^{-1}}.
	\end{equation}
	Moreover, let
	\begin{equation*}\label{eq.90.4}
		\tilde{u}_i := \sum_{k=1}^n \ebra{v_i}_k \cdot \ebra{x_k +\xi_k^{(i)}}, \qquad \xi^{(i)}_k \cdot \ebra{v_i}_k:= \frac{\lambda_i}{2} \cdot \sum_{r=1}^n \ebra{m_{2e_k + e_r} - m_{2e_k} \cdot m_{e_r}} \ebra{\rho_0}\cdot \ebra{v_i}_r.
	\end{equation*}
	Then one can verify that $K^* \tilde{u}_i = \lambda_i^{-1}\tilde{u}_i$ with $\tilde{u}_i \in L^\infty\ebra{\mathbb{R}^n, \ebra{1 + |x|^2}^{-1}}$; that is, $\tilde{u}_i$ is an eigenvector of $K^*$ with the same eigenvalue $\lambda_i^{-1}$ as $K$. The following lemma provides an explicit expression of the spectral projection operator.
	\begin{lemma}[The spectral projection operator for $K$]\label{lemma.2.1}
		Assume that (H1) -- (H4) hold. Then
		\begin{enumerate}
			\item Consider the $i$-th characteristic value $\lambda_i = -\theta_i^{-1}$ of $K$ with eigenspace $\tilde{E}_k = \text{null}\ \ebra{id - \lambda_i K} = \text{span}\{\tilde{v}_{n_{k-1} + 1}, \ \dots, \ \tilde{v}_{n_k}\}$ ($n_{k-1} < i \leq n_k$). Then the projection operator onto $\tilde{E}_k$ is explicitly given by
			\begin{equation}\label{eq.90.6}
				P_{\lambda_i} = l_{n_{k-1}+1}\ebra{\cdot} \tilde{v}_{n_{k-1}+1} + \cdots + l_{n_k}\ebra{\cdot}\tilde{v}_{n_k},
			\end{equation}
			where the bounded linear functionals defined by
			\begin{equation*}\label{eq.90.5.1}
				l_j\ebra{\cdot} := \frac{\eangle{\cdot, \tilde{u}_j}}{\eangle{\tilde{v}_j, \tilde{u}_j}},
			\end{equation*}
			satisfy
			\begin{equation}\label{eq.90.7}
				\begin{aligned}
					&l_j\ebra{z} = 0, \qquad\qquad\qquad  \text{for } z \in \text{range} \ebra{id - \lambda_i K},\\
					&l_j\ebra{\tilde{v}_m} = \delta_{jm}, \qquad\qquad \ j, m \in \{n_{k-1}+1,\  \dots,\ n_k \}.
				\end{aligned}
			\end{equation}
			($\eangle{\cdot, \cdot}$ denotes the dual relation defined in (\ref{eq.90.5}), and $\delta_{jm}$ denotes the Kronecker symbol. )
			\item In particular, when $n=1$, $\lambda = -\ebra{m_2 - m_1^2}^{-1}\ebra{\rho_0}$, $\tilde{E} = \text{span}\{\tilde{v}\} = \text{span}\{\sqrt{2}\pi^{-1/4}\cdot (x - m_1\ebra{\rho_0})\cdot \rho_0\}$, and the projection operator is given by
			\begin{equation*}\label{eq.90.8}
				\begin{aligned}
					& P = l\ebra{\cdot} \tilde{v} = \frac{\eangle{\cdot, \tilde{u}}}{\eangle{\tilde{v}, \tilde{u}}} \cdot \tilde{v}, \qquad \tilde{u} = x - \frac{\ebra{m_3 - m_2 \cdot m_1}}{2\ebra{m_2 - m_1^2}}\ebra{\rho_0}.
				\end{aligned}
			\end{equation*}
		\end{enumerate}
	\end{lemma}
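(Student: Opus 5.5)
The plan is to identify the spectral projection $P_{\lambda_i}$ as the unique bounded projection onto $\text{null}\,(id-\lambda_i K)$ along $\text{range}\,(id-\lambda_i K)$, and then to check that the operator in \eqref{eq.90.6} is exactly that projection. By Lemma \ref{lemma.2}, $K$ is compact and $\lambda_i^{-1}=-\theta_i$ is a semisimple eigenvalue. Hence $id-\lambda_i K$ is Fredholm of index zero with ascent one, and we have the decomposition $L^1(\mathbb{R}^n,1+|x|^2)=\text{null}\,(id-\lambda_i K)\oplus\text{range}\,(id-\lambda_i K)$ recorded after Lemma \ref{lemma.2}. A bounded operator $P$ is therefore the spectral projection as soon as $P\tilde v_m=\tilde v_m$ for each basis vector $\tilde v_m$ of $\tilde E_k$ and $P$ vanishes on the range. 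Both properties follow from \eqref{eq.90.7}, so the whole task reduces to verifying \eqref{eq.90.7}, together with the boundedness of the $l_j$.

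\emph{Step 1: $\tilde u_j$ is an eigenvector of $K^*$.} I first check $K^*\tilde u_j=\lambda_j^{-1}\tilde u_j$ using the explicit formula for $K^*$. Write $\tilde u_j=\langle v_j,x\rangle+c_j$ with $c_j=\sum_k\xi^{(j)}_k(v_j)_k$.
\begin{itemize}
\item \emph{Linear part.} We have $m_{e_k}(\tilde u_j\rho_0)-m_{e_k}(\rho_0)m_0(\tilde u_j\rho_0)=(\text{Cov}(\rho_0)v_j)_k=\theta_j(v_j)_k$; the constant $c_j$ cancels here. So the linear part of $K^*\tilde u_j$ is $-\theta_j\langle v_j,x\rangle=\lambda_j^{-1}\langle v_j,x\rangle$.
\item \emph{Constant part.} It equals $\frac12\sum_{k,r}(v_j)_r\bigl(m_{2e_k+e_r}-m_{2e_k}m_{e_r}\bigr)(\rho_0)$, again independent of $c_j$. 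This equals $\lambda_j^{-1}c_j$ exactly when $c_j=\frac{\lambda_j}{2}\sum_{k,r}(\cdots)$, which is the defining relation for $\xi^{(j)}$.
\end{itemize}
Since $\tilde u_j$ is affine, $\tilde u_j\in L^\infty(\mathbb{R}^n,(1+|x|^2)^{-1})$, so $l_j$ is a bounded functional. For $j$ in the block $n_{k-1}<j\le n_k$ we have $\lambda_j=\lambda_i$.

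\emph{Step 2: vanishing on the range and biorthogonality.} For $z=(id-\lambda_i K)w$, duality gives $\langle z,\tilde u_j\rangle=\langle w,(id-\lambda_i K^*)\tilde u_j\rangle=0$, which is the first line of \eqref{eq.90.7}. For the second line, use $\tilde v_m=\sqrt2\pi^{-n/4}\rho_0\langle v_m,x-m_1(\rho_0)\rangle$. Because $\int(x-m_1)\rho_0\,dx=0$, the constant $c_j$ drops out, and
\[
\langle\tilde v_m,\tilde u_j\rangle=\sqrt2\pi^{-n/4}\,v_m^{T}\text{Cov}(\rho_0)v_j=\sqrt2\pi^{-n/4}\theta_j\delta_{jm}.
\]
This uses that the $v$'s are orthonormal eigenvectors of the symmetric matrix $\text{Cov}(\rho_0)$. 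The value is nonzero because $\theta_j>0$, so $l_j(\tilde v_m)=\delta_{jm}$. It then follows that $P_{\lambda_i}^2=P_{\lambda_i}$, that $P_{\lambda_i}$ has range $\tilde E_k$, and that its kernel contains $\text{range}\,(id-\lambda_i K)$. By complementarity the kernel equals that range, so $P_{\lambda_i}$ is the spectral projection.

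\emph{Step 3: the case $n=1$.} Specialize with $v=1$, $\theta=m_2-m_1^2$ and $\lambda=-(m_2-m_1^2)^{-1}$. Then $\xi=\frac{\lambda}{2}(m_3-m_2m_1)$, which gives $\tilde u=x-\frac{m_3-m_2m_1}{2(m_2-m_1^2)}$, and $\tilde v=\sqrt2\pi^{-1/4}(x-m_1)\rho_0$ as claimed.

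The main obstacle is the bookkeeping in Step 1: one has to match the constant term of $K^*\tilde u_j$, which involves third moments, against $\lambda_j^{-1}c_j$. I expect it to close precisely because $\xi^{(j)}$ was defined to make it do so. A second point needing care is justifying the duality identity for the pairing \eqref{eq.90.5}, which holds since $K$ has finite rank.
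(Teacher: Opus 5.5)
Your proposal is correct and follows essentially the paper's route. You reduce \eqref{eq.90.6} to \eqref{eq.90.7} by uniqueness of the projection along $\text{null}\,(id-\lambda_i K)\oplus\text{range}\,(id-\lambda_i K)$, and you get vanishing on the range from $K^*\tilde u_j=\lambda_j^{-1}\tilde u_j$; the paper unrolls exactly this by substituting $z=\omega-\lambda_i K\omega$, with the same cancellations $1+\lambda_i\theta_j=0$ and $c_j=\frac{\lambda_j}{2}\sum_{k,r}(v_j)_r(m_{2e_k+e_r}-m_{2e_k}m_{e_r})$. You also obtain biorthogonality from the orthonormal eigenvectors of $\text{Cov}(\rho_0)$, and you make explicit that $\langle\tilde v_j,\tilde u_j\rangle=\sqrt2\pi^{-n/4}\theta_j\neq 0$, which the paper uses only implicitly for the normalization.
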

	\begin{proof}\label{proof.11}
		Once (\ref{eq.90.7}) holds, the uniqueness of the projection operator immediately implies that (\ref{eq.90.6}) holds. Hence, it suffices to verify (\ref{eq.90.7}). Note that $\lambda_i = \lambda_j$ as long as $i, j \in \{n_{k-1} + 1, \ \dots,\ n_k\}$. For any $\tilde{v}_m \in \text{span}\{\tilde{v}_j\}^{\perp\tilde{E}_k}$, by the orthogonality of the eigenvectors $v_m$ and $v_j$ of the matrix $\text{Cov}\ebra{\rho_0}$, we have
		\begin{equation*}\label{eq.90.8.1}
			\begin{aligned}
				l_j\ebra{\tilde{v}_m} &= \sqrt{2} \pi^{-n/4}/\eangle{\tilde{v}_j, \tilde{u}_j} \cdot \int_{\mathbb{R}^n} \ebra{\sum_{k=1}^n \ebra{v_m}_k \cdot \ebra{x_k - m_{e_k}}} \cdot \ebra{\sum_{r=1}^n \ebra{v_j}_r \cdot \ebra{x_r + \xi_r^{(j)}}} \cdot \rho_0\ebra{x}\dif x \\
				& = \sqrt{2}\pi^{-n/4} / \eangle{\tilde{v}_j, \tilde{u}_j} \cdot \int_{\mathbb{R}^n} \sum_{k, r = 1}^n (v_m)_k (v_j)_r \cdot \ebra{m_{e_k + e_r} - m_{e_k} \cdot m_{e_r}} = 0.
			\end{aligned}
		\end{equation*}
		
		For any $z \in \text{range}\ebra{id - \lambda_i K}$, there exists $\omega\in L^1\ebra{\mathbb{R}^n, 1+|x|^2}$ such that $\omega - \lambda_i K \omega = z$. Thus,
		\begin{align}
				l_j\ebra{z} & = \int_{\mathbb{R}^n} \left(\omega - \lambda_i \cdot \rho_0  \cdot \ebra{\sum_{r=1}^n \frac{1}{2}m_0\ebra{\omega} \cdot \ebra{
						x_r^2- m_{2 e_r}\ebra{\rho_0}}- m_{e_r}\ebra{\omega} \cdot \ebra{x_r - m_{e_r}\ebra{\rho_0}}}\right)\notag\\
				& \qquad \qquad  \cdot \ebra{\sum_{k=1}^n \ebra{v_j}_k \cdot \ebra{x_k + \xi_k^{(j)}}} \dif x \notag\\
				& = -m_0\ebra{\omega} \cdot \ebra{\lambda_i \cdot \sum_{k=1}^n\ebra{v_j}_k \xi_k^{(j)}\theta_i + \lambda_i \cdot \sum_{k, r=1}^n \frac{1}{2}\ebra{v_j}_k\cdot \ebra{m_{2e_r + e_k} - m_{2e_r} \cdot m_{e_k}}\ebra{\rho_0}} \notag\\
				& \qquad \qquad  + \sum_{k=1}^n m_{e_k}\ebra{\omega} \cdot \ebra{\ebra{v_j}_k + \lambda_i \cdot \theta_i \ebra{v_j}_k} = 0, \label{eq.90.9}
		\end{align}
		where $\text{Cov}\ebra{\rho_0} v_j = \theta_i v_j$ has been used in the first equality in (\ref{eq.90.9}).
	\end{proof}
	
	We end this subsection by recording information about the inverse of the homeomorphism $\ebra{id - \lambda_i K}|_{\text{range}\ebra{id - \lambda_i K}}$. Since $\text{null}\ \ebra{id - \lambda_i K} \subset \text{range } K \subset S := \text{span}\{\Psi_{\alpha}: |\alpha| := \sum_{i=1}^n |\alpha_i| \leq 2, \alpha \in \mathbb{N}^n\}$, $\text{dim} \ S < \infty$, the whole space admits the following direct-sum decomposition
	\begin{equation*}\label{eq.90.10}
		L^1\ebra{\mathbb{R}^n, 1 + |x|^2} = S \oplus R = \text{null}\ \ebra{id - \lambda_i K} \oplus \ebra{\text{null}\ \ebra{id - \lambda_i K}}^{\perp S} \oplus R.
	\end{equation*}
	As $S$ is closed under $K$ and $id - \lambda_i K$, the operator $id - \lambda_i K$ is also a homeomorphism restricted to $\ebra{\text{null }\ebra{id - \lambda_i K}}^{\perp S} = \text{range }\ebra{id - \lambda_i K}|_{S}$.
	
	For any $z \in \text{range} \ebra{id - \lambda_i K}$, we decompose it as
	\begin{equation*}\label{eq.90.11}
		z = z_S + z_R, \qquad z_S \in \text{range }\ebra{id - \lambda_i K}|_S, \ z_{R} \in R.
	\end{equation*}
	Then the desired preimage $\omega_z \in \text{range } \ebra{id - \lambda_i K}$ satisfying $z = \omega_z - \lambda_i K\omega_z$ is
	\begin{equation*}\label{eq.90.13}
		\omega_z = \ebra{\ebra{id - \lambda_i K}|_{\text{range }\ebra{id - \lambda_i K}|_S}}^{-1} \ebra{z_S + \lambda_i K z_R} + z_R.
	\end{equation*}
	Note that $P_{\lambda_i}\ebra{\lambda_i K z_R} = P_{\lambda_i} \ebra{z_R - \ebra{id - \lambda_i K}z_R} = 0$, $\lambda_i K z_R \in \text{range }K \subset S$, so the formula is well defined. Since operators $K$ and $id - \lambda_i K$ can be represented with basis inside $S$ (see Appendix \ref{appendix.D}), $K$ and $id - \lambda_i K$ and their inverses are computable inside $S$.
	
	\begin{remark}\label{rem.5}
		It follows that under the dissipation condition (H3) with $\delta > p-2$ ($p \in \mathbb{N}, p \geq 3$), the proofs above can be extended to polynomial interactions of degree $p$. In fact, a similar procedure also works if we replace the space by $L^1\ebra{\mathbb{R}^n, \ebra{1 + |x|^2}^{p/2}}$, and represent the operator $K$ in the subspace spanned by $\Psi_\alpha$ with $|\alpha| \leq p$.
	\end{remark}
	
	\subsection{Proof of theorems}\label{conti}
	In the continuous-phase-transition case, we assume the existence of a family of trivial solutions $\{\rho_0\ebra{\lambda}\}_{\lambda \in J}$ around the bifurcation point. Note that $\rho_0\ebra{\lambda}$ is also a fixed point of $\mathcal{T}$ for all admissible $\lambda$
	\begin{equation*}\label{eq.55.1}
		0 = (id - \mathcal{T}\ebra{\lambda, \cdot})\ebra{\rho_0} = \rho_0\ebra{\lambda} - \exp \ebra{\lambda \cdot \ebra{V / \kappa + W * \rho_0\ebra{\lambda}}} / Z\ebra{\lambda}.
	\end{equation*}
	This causes both the constant term and the $\partial_\lambda \tilde{F}\ebra{\lambda, 0}$ term in the expansion of the bifurcation equation \eqref{eq.15} to vanish, which is characteristic of a pitchfork bifurcation.
	
	Under (H4), and in the special symmetric case where $V\ebra{-\cdot} = V\ebra{\cdot}$, we can explicitly determine a family of invariant measures with zero first moments, given by
	\begin{equation}\label{eq.55}
		\begin{aligned}
			\rho_0 & = \exp \ebra{\ebra{-V - \kappa \cdot |x|^2/2}\cdot 2/\sigma^2} / \tilde{Z}\ebra{\kappa, \sigma} \\
			& = \exp\ebra{\lambda \cdot \ebra{V / \kappa + |x|^2/2}} / \tilde{Z}\ebra{\lambda} \quad  \ebra{\lambda = -2\kappa / \sigma^2}.
		\end{aligned}
	\end{equation}
	Due to the symmetry of $V$, all odd-order moments of $\rho_0$ vanish.
	
	\hl{The following Lemma {\ref{lemma.1.1}} follows from direct calculations. Therefore, we omit its proof for simplicity.}
	
	\begin{lemma}[Derivative formulas for $\tilde{F}$ in the continuous-phase-transition case]\label{lemma.1.1}
		Consider the bifurcation equation
		\begin{equation*}\label{eq.59.1}
			\tilde{F}\ebra{\lambda, \delta \rho} = F\ebra{\lambda, \rho_0\ebra{\lambda} + \delta \rho} = \left.\ebra{id - \mathcal{T}\ebra{\lambda, \cdot }}\ebra{\rho}\right|_{\rho = \rho_0\ebra{\lambda} + \delta \rho}.
		\end{equation*}
		Assume that (H1) -- (H3) hold, and $\mathcal{T}\ebra{\lambda, \cdot}$ has
		Fr\'{e}chet derivatives up to third order on a Banach space $X$. Then
		\begin{equation*}\label{eq.59.3}
			\begin{aligned}
				\ebra{\partial_\lambda \mathcal{T}}\ebra{\lambda, \rho_0} & := \left.\partial_\lambda \mathcal{T}\ebra{\lambda, \cdot} \ebra{\rho}\right|_{\rho = \rho_0} = \partial_\lambda \rho_0 - \lambda \cdot \rho_0 \cdot \tilde{K}_\lambda \partial_\lambda \rho_0,  \\
				\partial_\lambda\ebra{\mathcal{T}\ebra{\lambda, \rho_0}} & = \ebra{\partial_\lambda \mathcal{T}} \ebra{\lambda, \rho_0} + \ebra{\partial_\rho \mathcal{T}}\ebra{\lambda, \rho_0}[\partial_\lambda \rho_0] = \partial_\lambda \rho_0. \\
			\end{aligned}
		\end{equation*}
		Moreover, for any $\omega, \omega_1, \omega_2, \omega_3 \in X$,
		\begin{equation*}\label{eq.59.2}
			\begin{aligned}
				\partial_{\lambda} \tilde{F} \ebra{\lambda, 0} & = \partial_\lambda \ebra{\rho_0- \mathcal{T}\ebra{\lambda, \rho_0}} = 0, \\
				\partial_{\delta \rho} \tilde{F} \ebra{\lambda, 0}[\omega] &= \partial_\rho \ebra{id - \mathcal{T}\ebra{\lambda, \cdot}}\ebra{\rho_0}[\omega] =  \ebra{id -\lambda K} \omega , \\
				\partial_{\lambda, \delta \rho}^2 \tilde{F}\ebra{\lambda, 0}[\omega] & = -\ebra{\partial_{\lambda, \rho}^2 \mathcal{T}}\ebra{\lambda, \rho_0}[\omega] - \partial_{\rho, \rho}^2\mathcal{T}\ebra{\lambda, \rho_0}[\partial_\lambda \rho_0, \omega], \\
				\partial_{\delta \rho, \delta \rho}^2 \tilde{F}\ebra{\lambda, 0}[\omega_1, \omega_2] & = - \partial_{\rho, \rho}^2 \mathcal{T}\ebra{\lambda, \rho_0}[\omega_1, \omega_2], \\
				\partial_{\delta \rho, \delta \rho, \delta \rho}^3 \tilde{F} \ebra{\lambda, 0}[\omega_1, \omega_2, \omega_3] & = -\partial_{\rho, \rho, \rho}^3 \mathcal{T}\ebra{\lambda, \rho_0}[\omega_1, \omega_2, \omega_3],
			\end{aligned}
		\end{equation*}
		where 
		\begin{equation}\label{eq.59.5}
			\begin{aligned}
				& \partial_{\lambda, \delta \rho}^2 \tilde{F}\ebra{\lambda, 0}[\omega]  = -K_\lambda \omega - \lambda\cdot \rho_0 \cdot \left(\tilde{K}_\lambda\omega \cdot \ebra{\partial_\lambda\rho_0/\rho_0} - \int_{\mathbb{R}^n} \tilde{K}_\lambda\omega \cdot\ebra{\partial_\lambda\rho_0/\rho_0} \cdot \rho_0 \dif x\right)\\
				&\quad = - K_\lambda\omega - \lambda \cdot \rho_0 \cdot \left(\tilde{K}_\lambda\omega \cdot \ebra{\tilde{L} + \lambda \cdot \tilde{K}_\lambda \partial_\lambda \rho_0} - \int_{\mathbb{R}^n} \tilde{K}_\lambda\omega \cdot \ebra{\tilde{L} + \lambda \cdot \tilde{K}_\lambda \partial_\lambda \rho_0} \cdot \rho_0 \dif x \right).
			\end{aligned}
		\end{equation}
		If (H4) and the symmetry of $V$ are further assumed, and if $\{\rho_0\ebra{\lambda}\}$ is chosen as in \eqref{eq.55}, then
		\begin{equation*}\label{eq.59.4}
			\begin{aligned}
				& W * \partial_\lambda \rho_0  = m_2\ebra{\partial_\lambda\rho_0}/2, \qquad\qquad\qquad\quad\ \  \tilde{K}_\lambda \partial_\lambda \rho_0 = 0, \\
				& \ebra{\partial_\lambda \mathcal{T}}\ebra{\lambda, \rho_0}  = \left.\partial_\lambda \mathcal{T} \ebra{\lambda, \cdot}\right|_{\rho = \rho_0} = \partial_\lambda \rho_0, \qquad \partial^2_{\lambda, \delta \rho} \tilde{F} \ebra{\lambda, 0}[\omega] = -\ebra{\partial^2_{\lambda, \rho}\mathcal{T}}\ebra{\lambda, \rho_0}[\omega].
			\end{aligned}
		\end{equation*}
		The detailed expressions for the terms above are given in (\ref{eq.58.line1})--(\ref{eq.58.line-1}) and (\ref{eq.59}).
	\end{lemma}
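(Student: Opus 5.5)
The plan is to derive every formula in the lemma by differentiating the identity $\tilde{F}(\lambda,\delta\rho)=\rho_0(\lambda)+\delta\rho-\mathcal{T}(\lambda,\rho_0(\lambda)+\delta\rho)$ with the chain rule, treating $\rho_0(\lambda)$ as a differentiable curve in $X$. The derivative formulas for $\mathcal{T}$ from Lemma \ref{lemma.1} are taken as given.

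\emph{Step 1: the two identities for $\partial_\lambda\mathcal{T}$.} Differentiate the fixed-point identity $\rho_0(\lambda)=\mathcal{T}(\lambda,\rho_0(\lambda))$ in $\lambda$. This gives $\partial_\lambda\rho_0=(\partial_\lambda\mathcal{T})(\lambda,\rho_0)+\partial_\rho\mathcal{T}(\lambda,\rho_0)[\partial_\lambda\rho_0]$. By \eqref{eq.58.line1}, the last term equals $\lambda K_\lambda\partial_\lambda\rho_0=\lambda\rho_0\tilde{K}_\lambda\partial_\lambda\rho_0$, which yields both displayed identities.

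\emph{Step 2: derivatives of $\tilde{F}$.} The $\delta\rho$-derivatives at $\delta\rho=0$ are immediate: $\partial_{\delta\rho}^k\tilde{F}(\lambda,0)=\mathrm{id}\cdot\delta_{k1}-\partial_\rho^k\mathcal{T}(\lambda,\rho_0)$. For the pure $\lambda$-derivative, $\partial_\lambda\tilde{F}(\lambda,0)=\partial_\lambda\rho_0-\partial_\lambda\big(\mathcal{T}(\lambda,\rho_0(\lambda))\big)=0$ by Step 1.

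For the mixed derivative, differentiate $\partial_{\delta\rho}\tilde{F}(\lambda,0)[\omega]=\omega-\partial_\rho\mathcal{T}(\lambda,\rho_0(\lambda))[\omega]$ in $\lambda$. This produces $-(\partial^2_{\lambda,\rho}\mathcal{T})[\omega]-\partial^2_{\rho,\rho}\mathcal{T}[\partial_\lambda\rho_0,\omega]$. An equivalent and more direct route is to differentiate $-\lambda K_\lambda\omega=-\lambda\rho_0\big(W*\omega-\langle W*\omega\rangle_{\rho_0}\big)$ as a product. This gives $-K_\lambda\omega-\lambda\,\partial_\lambda\rho_0\,\tilde{K}_\lambda\omega+\lambda\rho_0\langle W*\omega,\partial_\lambda\rho_0\rangle$. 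Writing $W*\omega=\tilde{K}_\lambda\omega+c$ and using $\int\partial_\lambda\rho_0=0$ recovers the first line of \eqref{eq.59.5}. Substituting $\partial_\lambda\rho_0/\rho_0=\tilde{L}+\lambda\tilde{K}_\lambda\partial_\lambda\rho_0$, which is Step 1 divided by $\rho_0$ together with \eqref{eq.59}, gives the second line. As a consistency check, the second line agrees with Lemma \ref{lemma.1}'s $\partial^2_{\lambda,\rho}\mathcal{T}$ plus \eqref{eq.4.4.1}.

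\emph{Step 3: the symmetric quadratic case.} Under (H4), $W*\partial_\lambda\rho_0=\tfrac12 m_0(\partial_\lambda\rho_0)|x|^2-\langle m_1(\partial_\lambda\rho_0),x\rangle+\tfrac12 m_2(\partial_\lambda\rho_0)$. Mass conservation of $\rho_0(\lambda)$ gives $m_0(\partial_\lambda\rho_0)=0$. The evenness of $\rho_0(\lambda)$ from \eqref{eq.55}, valid for every $\lambda$, gives $m_1(\rho_0(\lambda))\equiv0$, hence $m_1(\partial_\lambda\rho_0)=0$. So $W*\partial_\lambda\rho_0$ is constant, and $\tilde{K}_\lambda\partial_\lambda\rho_0=0$ follows. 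The remaining two claims then drop out of Steps 1 and 2, since the $\partial^2_{\rho,\rho}\mathcal{T}[\partial_\lambda\rho_0,\cdot]$ term is proportional to $\tilde{K}_\lambda\partial_\lambda\rho_0$.

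\emph{Main obstacle.} The only delicate point is justifying the differentiation of $\rho_0(\lambda)$ in $X$: differentiability of the curve and interchanging $\partial_\lambda$ with the moment integrals. I would handle this by dominated convergence, using the exponential decay of $\rho_0$ from Lemma \ref{lem.1} and polynomial bounds on $V$ from (H2), exactly as in the proof of Lemma \ref{thm.1}.
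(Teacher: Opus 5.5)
Your proposal is correct and is exactly the direct chain-rule computation the paper omits as ``direct calculations'': differentiating $\rho_0(\lambda)=\mathcal{T}(\lambda,\rho_0(\lambda))$ gives the two $\partial_\lambda\mathcal{T}$ identities; differentiating $\mathrm{id}-\lambda K_\lambda$ in $\lambda$ and using $\partial_\lambda\rho_0/\rho_0=\tilde{L}+\lambda\tilde{K}_\lambda\partial_\lambda\rho_0$ gives both lines of \eqref{eq.59.5}; and $m_0(\partial_\lambda\rho_0)=m_1(\partial_\lambda\rho_0)=0$ settles the symmetric quadratic case. The one caveat concerns your ``main obstacle'': for a general, implicitly defined trivial family, differentiability of $\lambda\mapsto\rho_0(\lambda)$ in $X$ cannot come from dominated convergence, and the implicit function theorem is unavailable at $\lambda_\star$, where $\mathrm{id}-\lambda_\star K_{\lambda_\star}$ is singular, so it must be a standing hypothesis (as the statement tacitly assumes by writing $\partial_\lambda\rho_0$), while your dominated-convergence justification does work for the explicit Gibbs family \eqref{eq.55}.
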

	
	The preceding lemma shows that, assuming (H4) and the symmetry of $V$, even though the trivial solution family ${\rho_0(\lambda)}$ depends on $\lambda$, the derivatives appearing in the bifurcation equation are the same as those in the $\lambda$-independent case.
	
	\begin{lemma}[Structure of bifurcation equation, continuous-phase-transition case under quadratic interaction]\label{lemma.3}
		Assume that (H1) -- (H4) hold. Choose $X = L^1\ebra{1 + |x|^2}$. Then the Fr\'{e}chet derivatives up to order three exist for given $\lambda < 0$ and the bifurcation equation can be expanded as
		\begin{equation*}\label{eq.56}
			0 = \tilde{F}\ebra{\lambda, \delta \rho} = \ebra{id - \lambda K_\lambda}\ebra{\delta
				\rho} + G\ebra{\lambda, \delta \rho},
		\end{equation*}
		where $K_\lambda$, the operator defined above in (\ref{eq.58.line1}), is compact, and $G$, the higher-order small term, given by $G\ebra{\lambda, \delta \rho} := -\ebra{\mathcal{T}\ebra{\lambda, \rho_0 + \delta \rho} - \rho_0 - \lambda K\delta \rho }$, is completely continuous.
		We note that $K_\lambda$ depends on $\lambda$ via $\rho_0\ebra{\lambda}$. Moreover, the remaining term $G$ satisfies 
		\begin{equation*}\label{eq.60}
			\begin{aligned}
				& G\ebra{\lambda, 0} = 0 \text{\qquad \qquad \ for \text{ all } $\lambda \in J$}, \\
				& G\ebra{\lambda, \delta \rho} = o\ebra{\norm{\delta \rho}} \text{\quad uniformly with respect to the parameter $\lambda$ as $\delta \rho \to 0$. }
			\end{aligned}
		\end{equation*}
		This leads to
		\begin{equation*}\label{eq.61}
			\partial_{\delta \rho} G\ebra{\lambda, 0}  = 0, \qquad
			\partial_{\lambda, \delta \rho}^2 G\ebra{\lambda, 0}[\omega] = \partial_{\lambda, \delta \rho}^2 \left.\ebra{-\mathcal{T}\ebra{\rho_0 + \delta \rho} + \lambda K_\lambda \delta \rho}\right|_{\delta \rho = 0}[\omega] = 0.
		\end{equation*}
	\end{lemma}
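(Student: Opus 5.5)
The plan is to exploit the explicit structure of $\mathcal{T}$ under (H4). For $\rho\in X=L^1(\mathbb{R}^n,1+|x|^2)$ we have $W*\rho(x)=\tfrac12 m_0(\rho)|x|^2-\langle m_1(\rho),x\rangle+\tfrac12 m_2(\rho)$. The constant term $\tfrac12 m_2(\rho)$ cancels after normalization. Hence
\[
\mathcal{T}(\lambda,\rho)=\Phi\bigl(\lambda,m_0(\rho),m_1(\rho)\bigr),
\qquad
\Phi(\lambda,a,b)(x)=\frac{\exp\bigl(\lambda(V\kappa^{-1}+\tfrac a2|x|^2-\langle b,x\rangle)\bigr)}{\int_{\mathbb{R}^n}\exp\bigl(\lambda(V\kappa^{-1}+\tfrac a2|y|^2-\langle b,y\rangle)\bigr)\dif y}.
\]
The maps $\rho\mapsto m_0(\rho)$ and $\rho\mapsto m_1(\rho)$ are bounded linear functionals on $X$. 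So $\mathcal{T}$ is the composition of a bounded linear map $X\to\mathbb{R}^{1+n}$ with a map from the finite-dimensional parameter space $(\lambda,a,b)$ into $X$. Fr\'echet differentiability up to order three therefore reduces to showing that $\Phi$ is $C^3$ (indeed $C^\infty$) as a map into $X$, on a neighbourhood of $(\lambda,1,m_1(\rho_0(\lambda)))$ with $\lambda<0$ and $a$ near $1$.

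First I would establish this $C^3$ regularity. Formally, each derivative of the exponential with respect to $\lambda$, $a$ or $b$ multiplies it by polynomials in $x$ and $V(x)$. The key estimate is a dominating function: for $(\lambda,a,b)$ in a small compact neighbourhood, Lemma \ref{lem.1} and the coercivity $V(x)\ge c|x|^{2+\delta}-C$ from (H3) give
\[
\exp\bigl(\lambda(V\kappa^{-1}+\tfrac a2|x|^2-\langle b,x\rangle)\bigr)\le C e^{-c'|x|^{2+\delta}},
\]
uniformly in the neighbourhood, since $\lambda<0$ stays bounded away from $0$ and the quadratic and linear terms are dominated by $|x|^{2+\delta}$. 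Under (H2), $V$ has polynomial growth, so $(1+|x|^2)\,|V|^j|x|^k$ times this bound is integrable. By dominated convergence (with mean-value estimates), the partial derivatives of the numerator exist, are continuous into $L^1(\mathbb{R}^n,1+|x|^2)$ up to any order, and the remainders are uniformly small. The normalizing constant is $C^\infty$ and positive. Hence $\Phi$ is $C^3$ into $X$, and by the chain rule so are $\mathcal{T}$ and $\tilde F(\lambda,\delta\rho)=\rho_0(\lambda)+\delta\rho-\mathcal{T}(\lambda,\rho_0(\lambda)+\delta\rho)$. This also needs $\lambda\mapsto\rho_0(\lambda)$ to be $C^3$ into $X$, which follows because $m_1(\rho_0(\lambda))$ is a smooth function of $\lambda$ along the trivial branch; the identical argument applies to $\rho_0(\lambda)$ itself.

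Next I identify the first derivative. Differentiating at $\rho_0$ gives exactly $\lambda K_\lambda$ from (\ref{eq.58.line1}), consistent with Lemma \ref{lemma.1}. Compactness of $K_\lambda$ then holds because it has finite rank: its range lies in $\text{span}\{\Psi_\alpha:|\alpha|\le 2\}$ by Lemma \ref{lemma.2}. Alternatively, it follows from the compactness of $\mathcal{T}$ in Lemma \ref{thm.1} via \cite[Lemma 3.1.12]{BT03}. Complete continuity of $G$ then follows: $G=-(\mathcal{T}(\lambda,\rho_0+\cdot)-\rho_0-\lambda K\,\cdot)$ is a compact continuous map (Lemma \ref{thm.1}) plus a constant plus a finite-rank operator. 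It is even jointly continuous in $\lambda$ and maps bounded sets to relatively compact ones, since its dependence on $\delta\rho$ factors through finitely many moments.

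Finally I verify the remainder properties. $G(\lambda,0)=0$ holds because $\rho_0(\lambda)$ is a fixed point. For the uniform estimate $G(\lambda,\delta\rho)=o(\|\delta\rho\|)$, apply Taylor's formula with integral remainder to $\Phi$ in the variables $(a,b)$. Since $|m_0(\delta\rho)|+|m_1(\delta\rho)|\le C\|\delta\rho\|_X$, we get
\[
\|G(\lambda,\delta\rho)\|_X\le \sup\|D^2\Phi\|\,C^2\|\delta\rho\|^2,
\]
with the supremum taken over a compact neighbourhood of $\{(\lambda,1,m_1(\rho_0(\lambda))):\lambda\in\bar J\}$. It is finite by the continuity of $D^2\Phi$ established above, and shrinking $\delta_1$ if necessary keeps $\lambda$ bounded away from $0$. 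The identities $\partial_{\delta\rho}G(\lambda,0)=0$ and $\partial^2_{\lambda,\delta\rho}G(\lambda,0)=0$ follow by differentiating the identity $\partial_{\delta\rho}G(\lambda,0)=-\lambda K_\lambda+\lambda K_\lambda=0$ in $\lambda$.

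The main obstacle is the uniform domination in the first step, in particular keeping $\lambda<0$ bounded away from $0$ and controlling $\rho_0(\lambda)$ and its $\lambda$-derivatives uniformly on $J$. I expect the hyper-dissipativity exponent $\delta>0$ in (H3) to be what makes every polynomial weight harmless.
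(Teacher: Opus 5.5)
Your argument is correct, but it is organized differently from the paper's sketch. The paper stays in the infinite-dimensional variable. It writes $G$ as a second-order Lagrange remainder $-\tfrac12\partial^2_{\rho\rho}\mathcal{T}(\lambda,\rho_0+t\delta\rho)[\delta\rho,\delta\rho]$, built from the general derivative formulas of Lemma \ref{lemma.1}, and bounds it using (H2) and the pointwise bounds (\ref{eq.25.line1})--(\ref{eq.25.line-1}) on $W*\rho$. It gets the Fr\'echet remainders by dominated convergence with a majorant $C(1+|x|)^N e^{-c|x|^{2+\delta}}$, and takes compactness from the Fr\'echet--Kolmogorov--Riesz argument of Lemma \ref{thm.1}. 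You use the same majorant, but only after observing that under (H4) the map $\mathcal{T}(\lambda,\cdot)$ factors through the bounded linear map $\rho\mapsto(m_0(\rho),m_1(\rho))\in\mathbb{R}^{1+n}$. That factorization buys several things:
\begin{itemize}
\item Fr\'echet $C^3$ regularity reduces to differentiation under the integral sign in finitely many parameters.
\item Complete continuity of $G$ is immediate: bounded sets in $X$ go to bounded parameter sets, and $\Phi$ is continuous on their compact closures for every real $a$, since $\lambda<0$ and $V$ is superquadratic.
\item The integral-form Taylor estimate gives the quantitative bound $\|G(\lambda,\delta\rho)\|_X=O(\|\delta\rho\|_X^2)$ uniformly on $J$. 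This also avoids the paper's ``for some $t\in(0,1)$'' identity, which is not valid verbatim for $X$-valued maps.
\end{itemize}
In exchange, the paper's formulation does not depend on such a factorization. It therefore transfers more directly to kernels like the Gaussian interaction of Example \ref{ex.6}, where $\mathcal{T}$ does not factor through finitely many moments.

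One justification should be changed. The $C^3$ dependence of $\lambda\mapsto\rho_0(\lambda)$ does not follow from the setup. The mean $m(\lambda)=m_1(\rho_0(\lambda))$ solves $m=\int x\,\Phi(\lambda,1,m)\,\mathrm{d}x$, whose linearization in $m$ is $I+\lambda\,\mathrm{Cov}(\rho_0(\lambda))$. At $\lambda_\star$ this matrix is singular precisely by Dawson's criterion (\ref{eq.102}), so the implicit function theorem cannot give smoothness of the branch through the bifurcation point. Regularity of the trivial family has to be taken as a hypothesis. The paper assumes it tacitly, using $\partial_\lambda\rho_0$ throughout, and it is automatic in the symmetric case (\ref{eq.55}), where $m_1\equiv 0$. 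You need it only for the joint $\lambda$-regularity of $\tilde F$ and for $\partial^2_{\lambda,\delta\rho}G(\lambda,0)=0$. Your uniform remainder estimate only requires $\sup_{\lambda\in\bar J}|m_1(\rho_0(\lambda))|<\infty$. That already follows from the a priori second-moment bound for invariant measures, obtained by testing the stationary equation against $|x|^2$ and using (H3).
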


	\begin{proof}\label{proof.8}
		We only sketch the proof. To prove the uniform smallness of $G$, we write it as a Lagrange-type remainder. For some $t \in \ebra{0, 1}$,
		\begin{equation*}\label{eq.62}
			\begin{aligned}
				G\ebra{\lambda, \delta \rho} & = -\partial_{\rho, \rho}^2 \ebra{\mathcal{T}\ebra{\lambda, \rho_0 + t\delta \rho}}[\delta \rho, \delta \rho] / 2\\
				& = -\lambda^2/2 \cdot \mathcal{T}\ebra{\lambda, \rho_0 + t \delta \rho} \cdot \left((A_\lambda \delta \rho)^2 - \int_{\mathbb{R}^n} (A_\lambda \delta \rho)^2 \cdot \mathcal{T}\ebra{\rho_0 + t \delta \rho} \dif x\right),
			\end{aligned}
		\end{equation*}
		where
		\begin{equation*}\label{eq.62.1}
			A_\lambda \delta \rho = W * \delta \rho - \int_{\mathbb{R}^n} W * \delta \rho \cdot T(\lambda, \rho_0 + t \delta \rho).
		\end{equation*}
		The uniform smallness when $\lambda \in J = B_{\delta_1}\ebra{\lambda_0}$ can be derived by using (H2) and estimates (\ref{eq.25.line1})-(\ref{eq.25.line-1}) to bound the Gibbs map $\mathcal{T}$. 
		
		Under (H4), the derivatives up to order three of the numerator $\rho \mapsto \exp(\lambda(V/\kappa+W*\rho))$ and of the integrands defining the normalizing functional $\rho \mapsto Z(\lambda,\rho)$ are finite sums of polynomial factors multiplied by the same Gibbs exponential. By the coercivity of \(V\), these terms, after multiplication by the weight \(1+|x|^2\), are uniformly dominated on bounded neighborhoods by an integrable function of the form $C(1+|x|)^N e^{-c|x|^{2+\delta}}.$ Hence the Fr\'{e}chet remainder estimates follow from the dominated convergence theorem. 
	\end{proof}
	
	
	
	We now establish the pitchfork bifurcation theorems.
	\begin{proof}[Proof of Theorem \ref{thm.7}]\label{proof.12}
		The proof follows from Crandall-Rabinowitz's abstract bifurcation theorem \hl{(see Theorem {\ref{thm.5}})} and the derivative formula in Lemma \ref{lemma.1.1}, combined with detailed computations. Indeed, equation (\ref{eq.95}) follows from equation (\ref{eq.59.5}) and
		\begin{equation*}
			\partial_\lambda \rho_0\ebra{\lambda} = \partial_\lambda (\mathcal{T}\rho_0\ebra{\lambda}) = \lambda K_\lambda \partial_\lambda \rho_0\ebra{\lambda} + \rho_0\ebra{\lambda} \cdot \tilde{L}. 
		\end{equation*}
		Equations (\ref{eq.96}), (\ref{eq.97}) and (\ref{eq.98}) are obtained by substituting $(id - \lambda_\star K_{\lambda_\star})\tilde{v}_\star = 0$ into equations (\ref{eq.4.4.1}) and (\ref{eq.58.line-1}).
	\end{proof}
	
	\begin{proof}[Proof of Theorem \ref{thm.8}]\label{proof.13}
		We investigate the phase transitions driven by quadratic interaction. Compactness of $K$ on $L^1\ebra{\mathbb{R}^n, 1 + |x|^2}$ and smallness of $G$ have been established in Lemmas \ref{thm.1} and \ref{lemma.3}. Spectrum properties of $K$ and the spectral projection operator have been established in Lemmas \ref{lemma.2} and \ref{lemma.2.1}. We therefore focus on the computation of the sufficient conditions. We compute the transversality condition as
		\begin{equation*}\label{eq.63.1}
			\begin{aligned}
				P_{\lambda_\star} \partial_{\lambda, \delta \rho}^2 \tilde{F}\ebra{\lambda_\star, 0}[\tilde{v}_\star]/\tilde{v}_\star & = \frac{-\sqrt{2} \pi^{-n/4}}{\eangle{\tilde{v}_\star, \tilde{u}_\star}} \cdot \left(\lambda_\star^{-1} \cdot \sum_{k, r=1}^n \ebra{v_\star}_k \ebra{v_\star}_r\cdot \ebra{m_{e_k + e_r}\ebra{\rho_0} - m_{e_k}\ebra{\rho_0}m_{e_r}\ebra{\rho_0}}\right.\\
				& \qquad + \sum_{k, r = 1}^n \ebra{v_\star}_k \ebra{v_\star}_r \cdot \left(\int_{\mathbb{R}^n} \ebra{x_k - m_{e_k}\ebra{\rho_0}} \ebra{x_r + \xi_r^{(\star)}} \partial_\lambda \rho_0 \dif x\right.\\
				& \qquad \left.\left.- \int_{\mathbb{R}^n} \ebra{x_k - m_{e_k}\ebra{\rho_0}} \cdot \partial_\lambda\rho_0 \dif x \cdot \int_{\mathbb{R}^n} \ebra{x_r + \xi_r^{(\star)}}\cdot \rho_0\dif x\right)\right).
			\end{aligned}
		\end{equation*}
		Since
		\begin{equation*}\label{eq.63.2}
			\eangle{\tilde{v}_\star, \tilde{u}_\star} = -\sqrt{2} \pi^{-n/4} \lambda_\star^{-1}, \qquad \text{Cov}\ebra{\rho_0} v_\star = -\lambda_\star^{-1} v_\star, \qquad \int_{\mathbb{R}^n} \partial_\lambda \rho_0 \dif x = 0,
		\end{equation*}
		we obtain the equation (\ref{eq.103}). The pitchfork condition is computed as
		\begin{equation*}\label{eq.63.3}
			\begin{aligned}
				P_{\lambda_\star} \partial_{\delta \rho, \delta \rho}^2 \tilde{F}\ebra{\lambda_\star, 0}[\tilde{v}_\star, \tilde{v}_\star]/\tilde{v}_\star & = \frac{-2\pi^{-n/2}}{\eangle{\tilde{v}_\star, \tilde{u}_\star}}\cdot \sum_{k, r, s=1}^n \ebra{v_\star}_k \ebra{v_\star}_r \ebra{v_\star}_s\cdot \\
				& \left(\int_{\mathbb{R}^n} \ebra{x_k - m_{e_k}\ebra{\rho_0}} \ebra{x_r - m_{e_r}\ebra{\rho_0}}\ebra{x_s + \xi_s^{(\star)}\ebra{\rho_0}} \cdot \rho_0 \dif x\right.\\
				& \left. - \int_{\mathbb{R}^n} \ebra{x_k - m_{e_k}\ebra{\rho_0}}\ebra{x_r - m_{e_r}\ebra{\rho_0}}\cdot \rho_0 \dif x \cdot \int_{\mathbb{R}^n} \ebra{x_s + \xi_s^{\star}}\cdot \rho_0 \dif x \right),
			\end{aligned}
		\end{equation*}
		which gives \eqref{eq.104} after simplification. The non-degenerate condition (\ref{eq.106}) is a result of direct computations. Equation (\ref{eq.107}) relies on the key observation that
		\begin{equation}\label{eq.63.4}
			\begin{aligned}
				2 \partial_s w\ebra{\lambda_\star, 0} & = - (\partial_{\delta \rho} \tilde{F}\ebra{\lambda_\star, 0})^{-1} (id - P_{\lambda_\star}) \partial_{\delta \rho, \delta \rho}^2 \tilde{F} \ebra{\lambda_\star, 0}[\tilde{v}_\star, \tilde{v}_\star], \\
				& = -\partial_{\delta \rho, \delta \rho}^2 \tilde{F} \ebra{\lambda_\star, 0}[\tilde{v}_\star, \tilde{v}_\star] - \sum_{\lambda_j \neq \lambda_\star} \frac{\lambda_j}{\lambda_j - \lambda_\star} \cdot \frac{\eangle{\lambda_\star K_{\lambda_\star} \partial_{\delta \rho, \delta \rho}^2 \tilde{F} \ebra{\lambda_\star, 0}[\tilde{v}_\star, \tilde{v}_\star], \tilde{u}_j}}{\eangle{\tilde{v}_j, \tilde{u}_j}} \cdot \tilde{v}_j.
			\end{aligned}
		\end{equation}
		Indeed, by the derivative formula, $m_0\ebra{\partial_{\delta \rho, \delta \rho}^2 \tilde{F} \ebra{\lambda_\star, 0}[\tilde{v}_\star, \tilde{v}_\star]}\ebra{\rho_0} = 0$. Moreover, by the pitchfork condition, where $c\neq 0$ is a normalization constant, we have
		\begin{equation*}\label{eq.63.4.1}
			\begin{aligned}
				P_{\lambda_\star} K_{\lambda_\star} \partial_{\delta \rho, \delta \rho}^2 \tilde{F} \ebra{\lambda_\star, 0}[\tilde{v}_\star, \tilde{v}_\star]/\tilde{v}_\star & = c\cdot \eangle{K_{\lambda_\star} \partial_{\delta \rho, \delta \rho}^2 \tilde{F} \ebra{\lambda_\star, 0}[\tilde{v}_\star, \tilde{v}_\star], \tilde{u}_\star} \\
				& = c\cdot \eangle{\partial_{\delta \rho, \delta \rho}^2 \tilde{F} \ebra{\lambda_\star, 0}[\tilde{v}_\star, \tilde{v}_\star], K^*_{\lambda_\star} \tilde{u}_\star} = 0,
			\end{aligned}
		\end{equation*}
		which implies $K_{\lambda_\star} \partial_{\delta \rho, \delta \rho}^2 \tilde{F} \ebra{\lambda_\star, 0}[\tilde{v}_\star, \tilde{v}_\star] \in \text{span}\{\tilde{v}_1, \dots, \tilde{v}_n\} \setminus\{\tilde{v}_\star\}$. Thus $K_{\lambda_\star} \partial_{\delta \rho, \delta \rho}^2 \tilde{F} \ebra{\lambda_\star, 0}[\tilde{v}_\star, \tilde{v}_\star]$ can be represented by its projections onto this basis. One can also verify that
		\begin{equation*}\label{eq.63.5}
			\begin{aligned}
				& (id - \lambda_\star K_{\lambda_\star}) (-2 \partial_{s} w\ebra{\lambda_\star, 0}) = \partial_{\delta \rho, \delta \rho}^2 \tilde{F} \ebra{\lambda_\star, 0}[\tilde{v}_\star, \tilde{v}_\star] - \lambda_\star K_{\lambda_\star} \partial_{\delta \rho, \delta \rho}^2 \tilde{F} \ebra{\lambda_\star, 0}[\tilde{v}_\star, \tilde{v}_\star] \\
				& \qquad \qquad \quad + \sum_{\lambda_j \neq \lambda_\star} \frac{\eangle{\lambda_\star K_{\lambda_\star} \partial_{\delta \rho, \delta \rho}^2 \tilde{F} \ebra{\lambda_\star, 0}[\tilde{v}_\star, \tilde{v}_\star], \tilde{u}_j}}{\eangle{\tilde{v}_j, \tilde{u}_j}} \cdot \tilde{v}_j = \partial_{\delta \rho, \delta \rho}^2 \tilde{F} \ebra{\lambda_\star, 0}[\tilde{v}_\star, \tilde{v}_\star].
			\end{aligned}
		\end{equation*}
		Consequently, (\ref{eq.63.4}) holds.
		
		Finally, using the identity
		\begin{equation*}\label{eq.63.6}
			\begin{aligned}
				\lambda_\star K_{\lambda_\star}(2 \partial_{s} w\ebra{\lambda_\star, 0}) & = 2 \partial_{s} w\ebra{\lambda_\star, 0} + \partial_{\delta \rho, \delta \rho}^2 \tilde{F} \ebra{\lambda_\star, 0} [\tilde{v}_\star, \tilde{v}_\star]\\
				& = \sum_{\lambda_j \neq \lambda_\star} \frac{\lambda_j}{\lambda_j - \lambda_\star} \cdot \frac{\eangle{\lambda_\star\partial_{\delta \rho, \delta \rho}^2 \tilde{F} \ebra{\lambda_\star, 0}[\tilde{v}_\star, \tilde{v}_\star], \tilde{u}_j}}{\sqrt{2}\pi^{-n/4}} \cdot \tilde{v}_j,
			\end{aligned}
		\end{equation*}
		we obtain
		\begin{equation*}\label{eq.63.7}
			\begin{aligned}
				&P_{\lambda_\star} \partial_{\delta \rho, \delta \rho}^2 \tilde{F} \ebra{\lambda_\star, 0}[\tilde{v}_\star, 2 \partial_{s} w \ebra{\lambda_\star, 0}]/\tilde{v}_\star \\
				&= P_{\lambda_\star} \left(-\rho_0 \cdot \left( \frac{\tilde{v}_\star}{\rho_0}\cdot \sum_{\lambda_j \neq \lambda_\star} \frac{\lambda_j}{\lambda_j - \lambda_\star} \cdot \frac{\eangle{\lambda_\star\partial_{\delta \rho, \delta \rho}^2 \tilde{F} \ebra{\lambda_\star, 0}[\tilde{v}_\star, \tilde{v}_\star], \tilde{u}_j}}{\sqrt{2}\pi^{-n/4}} \cdot \frac{\tilde{v}_j}{\rho_0} - \right.\right.\\
				& \qquad\qquad \left.\left.\int_{\mathbb{R}^n} \frac{\tilde{v}_\star}{\rho_0}\cdot \sum_{\lambda_j \neq \lambda_\star} \frac{\lambda_j}{\lambda_j - \lambda_\star} \cdot \frac{\eangle{\lambda_\star\partial_{\delta \rho, \delta \rho}^2 \tilde{F} \ebra{\lambda_\star, 0}[\tilde{v}_\star, \tilde{v}_\star], \tilde{u}_j}}{\sqrt{2}\pi^{-n/4}} \cdot \frac{\tilde{v}_j}{\rho_0} \cdot \rho_0 \dif x\right)\right)/\tilde{v}_\star\\
				& = \frac{\lambda_\star^2}{2 \pi^{-n/2}} \cdot \sum_{\lambda_j \neq \lambda_\star} \frac{\lambda_j \eangle{\partial_{\delta \rho, \delta \rho}^2 \tilde{F} \ebra{\lambda_\star, 0}[\tilde{v}_\star, \tilde{v}_\star], \tilde{u}_j}}{\lambda_j - \lambda_\star} \cdot\left(\eangle{\frac{\tilde{v}_\star \tilde{v}_j}{\rho_0^2}, \tilde{u}_\star\rho_0} - \eangle{\frac{\tilde{v}_\star \tilde{v}_j}{\rho_0^2}, \rho_0}\cdot \eangle{\tilde{u}_\star, \rho_0}\right),
			\end{aligned}
		\end{equation*}
		and
		\begin{equation*}\label{eq.63.8}
			\eangle{\partial_{\delta \rho, \delta \rho}^2 \tilde{F} \ebra{\lambda_\star, 0}[\tilde{v}_\star, \tilde{v}_\star], \tilde{u}_j} = -\eangle{\frac{\tilde{v}_\star \tilde{v}_j}{\rho_0^2}, \tilde{u}_\star\rho_0} + \eangle{\frac{\tilde{v}_\star \tilde{v}_j}{\rho_0^2}, \rho_0}\cdot \eangle{\tilde{u}_\star, \rho_0}.
		\end{equation*}
		Combining these identities, we arrive at \eqref{eq.107} in the nondegeneracy condition.
	\end{proof}
	
	\begin{proof}[Proof of Corollary \ref{cor.1}]\label{proof.14}
		The corollary follows from the vanishing of the odd-indexed moments of the symmetric trivial solutions $\{\rho_0(\lambda)\}$, together with the results of Lemma \ref{lemma.1.1}.
	\end{proof}
	
	Theorem \ref{thm.9} follows from the saddle-node bifurcation theorem \hl{(see Theorem {\ref{thm.6}})} after similar computations. We thus omit the proof.
	
	\section{Proof of examples}\label{ex_proof}
	This section presents proofs for examples. Henceforth, we use the angle bracket $\eangle{\cdot}$ to denote
	\begin{equation*}\label{eq.112}
		\eangle{f}_{\rho} := \int_{\mathbb{R}^n} f \cdot \rho \dif x,
	\end{equation*}
	and omit the subscript $\rho$ when no ambiguity can arise.
	
	For the (a)symmetric double-well potential with quadratic interaction, define $V_s := x^4/4 - sx^3/3 - x^2/2, \ s \in [0, 1]$. The system of moment equations becomes
	\begin{equation}\label{eq.16.2}
		\begin{aligned}
			& m_3 = m_1 + sm_2, \\
			& m_4 = \sigma^2/2 + (1-\kappa)m_2 + sm_3 + \kappa m_1^2, \\
			& m_5 = \sigma^2m_1 + (1-\kappa) m_3 + sm_4 + \kappa m_1m_2, \\
			& m_6 = 3\sigma^2/2 \cdot m_2 + (1-\kappa) m_4 + sm_5 + \kappa m_1 m_3, \quad \cdots,\\
			& m_{p+2} = \sigma^2/2\cdot \ebra{p-1} m_{p-2} + \ebra{1-\kappa} m_p + s m_{p+1} + \kappa m_1 m_{p-1}.
		\end{aligned}
	\end{equation}
	Then the system \eqref{eq.16.2}, combined with Dawson's criterion in one dimension
	\begin{equation*}\label{eq.16.3}
		m_2 - m_1^2 = \sigma^2/\ebra{2\kappa},
	\end{equation*}
	enables us to characterize the phase-transition point.
	
	\begin{remark}\label{rem.10}
		To estimate the moments, we substitute \eqref{eq.16.2} into the Hankel inequalities and obtain expressions depending only on $m_1$ and $m_2$. The Hankel inequalities then provide arbitrarily precise moment estimates, provided that sufficiently many of these inequalities are considered.
	\end{remark}
	
	Each fixed point $m$ of the real-valued map $\eangle{x}_{\rho}\ebra{\sigma, \cdot} := \int_{\mathbb{R}} x \cdot \rho\ebra{\sigma, \cdot} \dif x$ corresponds to an invariant measure. The graph in the symmetric double-well case is shown below.
	
	\begin{figure}[htb]\label{sub.1}
		\centering
		\includegraphics[scale=0.95]{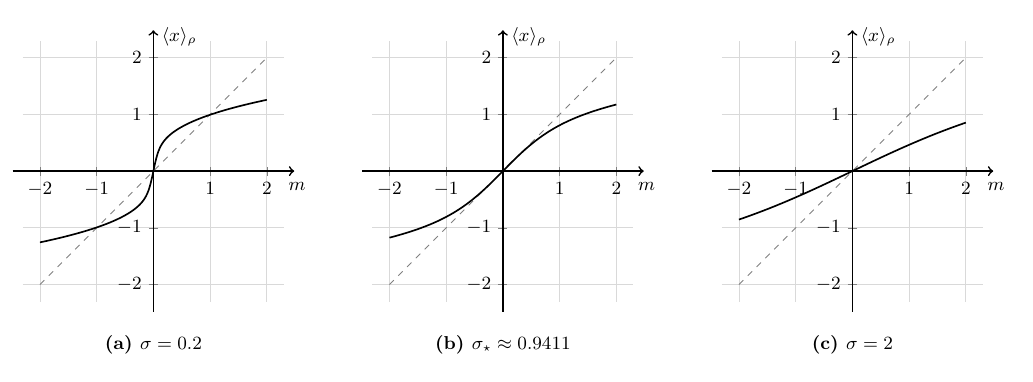}
		\caption{Graphs of $\eangle{x}_{\rho}$ and $id$ with respect to $m$ as $\sigma$ varies, with $\kappa = 1$. In the symmetric double-well case, the graph is centrally symmetric with respect to the point $(0, 0)$. When $0 < \sigma < \sigma_\star$, $\eangle{x}_{\rho}$ has three fixed points. When $\sigma = \sigma_\star$, the graph of $\eangle{x}_{\rho}$ is tangent to $id$ at the origin. When $\sigma > \sigma_\star$, $\eangle{x}_{\rho}$ has only one fixed point.}
	\end{figure}
	
	\begin{proof}[Proof of Example \ref{ex.1}]\label{proof.15}
		\underline{The local argument.} The main idea of the proof is to derive moment estimates from \eqref{eq.mom_eq} and to verify the conditions in Corollary \ref{cor.1}.
		
		\textit{Claim:} If $\ebra{\lambda_\star, \rho_0\ebra{\lambda_\star}}$ is a phase-transition point for $V = x^4/4 - x^2/2$, then $1/3 < m_2 < 1$ for every fixed $\kappa > 0$.
		
		Indeed, taking $s=0$ in moment-equation-system combined with Dawson's criterion (\ref{eq.16.2}), and using symmetry of trivial solutions $\rho_0\ebra{\lambda_\star}$, we deduce that the odd order moments vanish. In this case,  (\ref{eq.16.2}) becomes (we write $m_q\ebra{\rho_0\ebra{\lambda_\star}}, q\in \mathbb{N}$ as $m_q$ for the sake of simplicity)
		\begin{equation*}\label{eq.113}
			\begin{aligned}
				& m_q = 0 \quad \text{for $q$ odd}, \quad m_2 = \sigma_\star^2/\ebra{2\kappa}, \\
				& m_4 = \sigma_\star^2/2 + (1-\kappa) m_2 = m_2, \\
				& m_6 = 3\sigma_\star^2/2 \cdot m_2 + (1-\kappa) m_4 = 3\kappa m_2^2 + (1-\kappa)m_2, \quad \cdots.
			\end{aligned}
		\end{equation*}
		Substituting these equalities into the Hankel inequalities gives
		\begin{equation*}\label{eq.114}
			\begin{aligned}
				\Delta_0 &= 1 \geq 0, \quad \Delta_1 = m_2 - m_1^2 \geq 0,\quad \Delta_2 = m_2m_4 - m_2^3 \geq 0, \\
				\Delta_3 &= m_2 m_4 m_6 - m_4^3 - m_2^3 m_6 + m_2^2 m_4^2 = \kappa m_2^3\cdot\ebra{-3m_2+1}\cdot\ebra{m_2-1} \geq 0.
			\end{aligned}
		\end{equation*}
		The Hankel inequalities take a simple form because of the symmetry of $\rho_0\ebra{\lambda_\star}$. The inequalities $\Delta_0, \Delta_1, \Delta_2\geq 0$ hold automatically, and $\Delta_3 \geq 0$ if and only if
		\begin{equation*}\label{eq.115}
			\frac{1}{3} \leq m_2 \leq 1.
		\end{equation*}
		
		The Hankel determinants satisfy $\Delta_q = 0$ if and only if $\rho_0$ is supported on finitely many points. Since the invariant measure $\rho_0$ is absolutely continuous with respect to the Lebesgue measure, $\Delta_q$ cannot vanish, which yields the strict estimate $1/3 < m_2 < 1$.
		
		Next, we verify conditions required in Corollary \ref{cor.1}. The existence of $\ebra{\lambda_\star, \rho_0\ebra{\lambda_\star}}$ satisfying Dawson's criterion (\ref{eq.109}) can be easily checked  by considering the small noise limit and using intermediate value theorem in 1-dimension, or following Dawson's approach (\cite[Theorem 3.3.2 (c)]{Daw83}). The transversality condition holds because
		\begin{equation*}\label{eq.116}
			\begin{aligned}
				& P_{\lambda_\star} \partial_{\lambda, \delta \rho}^2 \tilde{F}\ebra{\lambda_\star, 0}[\tilde{v}_\star]/\tilde{v}_\star = -\lambda_\star^{-1}  + \lambda_\star \cdot \eangle{x^2}_{\partial_\lambda \rho_0}\\
				& = m_2 - m_2^{-1} \cdot \left(\eangle{x^2 \cdot \ebra{\frac{1}{4\kappa}x^4 + \ebra{\frac{1}{2} - \frac{1}{2\kappa}}x^2}}_{\rho_0} - \eangle{x^2}_{\rho_0} \cdot \eangle{\frac{1}{4\kappa}x^4 + \ebra{\frac{1}{2} - \frac{1}{2\kappa}}x^2}_{\rho_0}\right)\\
				& = m_2 - m_2^{-1} \cdot \left(\ebra{\frac{1}{4} + \frac{1}{4\kappa}}m_2^2 + \ebra{\frac{1}{4} - \frac{1}{4\kappa}}m_2\right) = \ebra{\frac{3}{4} - \frac{1}{4\kappa}}\cdot m_2 - \ebra{\frac{1}{4} - \frac{1}{4\kappa}} \\
				& \geq \frac{1}{2} \wedge \frac{1}{6\kappa} > 0.
			\end{aligned}
		\end{equation*}
		The pitchfork condition holds because
		\begin{equation*}\label{eq.117}
			P_{\lambda_\star} \partial_{\delta \rho, \delta \rho}^2 \tilde{F}\ebra{\lambda_\star, 0}[\tilde{v}_\star, \tilde{v}_\star]/\tilde{v}_\star = \sqrt{2} \pi^{-n/4} \lambda_\star \eangle{\ebra{x - \eangle{x}}^3}_{\rho_0} = 0.
		\end{equation*}
		The non-degenerate condition holds because
		\begin{equation*}\label{eq.128}
			\begin{aligned}
				P_{\lambda_\star} \partial_{\delta \rho, \delta \rho, \delta \rho}^3 \tilde{F} \ebra{\lambda_\star, 0}[\tilde{v}_\star, \tilde{v}_\star, \tilde{v}_\star]/ \ebra{3\tilde{v}_\star} & = 2\pi^{-n/2}/3\cdot  \lambda_\star \cdot \ebra{\eangle{\ebra{x - \eangle{x}}^4}_{\rho_0} - 3 \eangle{\ebra{x-\eangle{x}}^2}_{\rho_0}^2} \\
				& = 2\pi^{-n/2}/3 \cdot \lambda_\star \cdot m_2 \ebra{-3m_2 + 1} > 0.
			\end{aligned}
		\end{equation*}
		It follows from Corollary \ref{cor.1} that the system exhibits a continuous phase transition controlled by a subcritical pitchfork bifurcation at $\ebra{\lambda_\star, \rho_0\ebra{\lambda_\star}}$.
		
		\underline{The global argument.}
		In this example, the moment bifurcation equation (\ref{eq.16}) is essentially
		\begin{equation*}\label{eq.131}
			\begin{aligned}
				& \ebra{id - \tilde{\mathcal{T}}}\ebra{\lambda, m} = m - \int_{\mathbb{R}} x \cdot \rho\ebra{\lambda, m} \dif x = 0, \\
				& \rho\ebra{\lambda, m} = Z\ebra{\lambda, m}^{-1} \cdot \exp\ebra{\lambda\cdot \ebra{V/\kappa + x^2/2 - mx}}.
			\end{aligned}
		\end{equation*}
		Here $V = x^4/4 - x^2/2$ is the symmetric double-well potential. We determine the fixed points of the map $\tilde{\mathcal{T}}$. It is immediate that $0$ is a trivial fixed point. By the symmetry of the equation, it suffices to study the case $m > 0$.
		
		To obtain the global result, we use a version of the Griffiths-Hurst-Sherman (GHS) inequality on $\mathbb{R}$ \cite{Daw83, EMN76}. Under the condition
		\begin{equation*}
			\text{$\bar{V} \in C^3$ even, $\lim\limits_{x \to \pm\infty} \bar{V}\ebra{x} = \infty$, and $\bar{V}^\prime$ convex on $[0, \infty)$. }
		\end{equation*}
		For the partition function $Z\ebra{\lambda, m} := \int_{\mathbb{R}} \exp(\lambda\cdot\ebra{x^2/2 - mx}) \cdot \exp\ebra{\lambda\cdot \bar{V}/\kappa} \dif x$, we have
		\begin{equation*}\label{eq.129}\tag{GHS}
			\begin{aligned}
				\dif^3/\text{d} m^3\
				\ln Z\ebra{\lambda, m} = -\lambda^3 \cdot \eangle{\ebra{x - \eangle{x}}^3}_{\rho} \leq 0, \quad \text{ for all } m\geq 0.
			\end{aligned}
		\end{equation*}
		The derivatives of the logarithm of the partition function generate the central moments of the invariant measure. For example,
		\begin{equation*}\label{eq.130}
			\begin{aligned}
				& \text{d}/\text{d} m\ \ln Z \ebra{\lambda, m} = \int_{\mathbb{R}} -\lambda x \cdot \rho\ebra{\lambda, m} \dif x = -\lambda \tilde{\mathcal{T}} \ebra{\lambda, m}, \\
				& \text{d}^2/\text{d} m^2\ \ln Z \ebra{\lambda, m} = \lambda^2 \cdot \eangle{\ebra{x - \eangle{x}}^2}_{\rho} = -\lambda \partial_m \tilde{\mathcal{T}}\ebra{\lambda, m}.
			\end{aligned}
		\end{equation*}
		Thus, the GHS inequality states that $\tilde{\mathcal{T}}$ is concave with respect to $m$ for fixed $\lambda < 0$.
		
		\textit{Claim:} The following global conclusions hold for any fixed $\lambda < 0$.
		\begin{enumerate}
			\item When $\partial_m \tilde{\mathcal{T}}\ebra{\lambda, 0} = -\lambda \cdot \eangle{\ebra{x - \eangle{x}}^2}_{\rho\ebra{\lambda, 0}} < 1$, the map $\tilde{\mathcal{T}}$ has only the trivial fixed point.
			\item When $\partial_m \tilde{\mathcal{T}}\ebra{\lambda, 0} = -\lambda \cdot \eangle{\ebra{x - \eangle{x}}^2}_{\rho\ebra{\lambda, 0}} > 1$, the map $\tilde{\mathcal{T}}$ has a unique positive fixed point $m_+$. Moreover, $\partial_m \tilde{\mathcal{T}} \ebra{\lambda, m_+} < 1$.
		\end{enumerate}
		
		Indeed, (i) can be obtained by considering the concavity of $\tilde{\mathcal{T}}$ ensured by the (GHS) inequality. For (ii),  $\tilde{\mathcal{T}}\ebra{\lambda, 0} = 0$ and $\partial_m \tilde{\mathcal{T}}\ebra{\lambda, 0} > 1$ implies the existence of a small interval such that for any $m\in \ebra{0, \epsilon}$: $\tilde{\mathcal{T}}\ebra{\lambda, m} > m$ and $\partial_m \tilde{\mathcal{T}}\ebra{\lambda, m} > 1$. Via appropriate change of variables and the Laplace method, we estimate $\tilde{\mathcal{T}}\ebra{\lambda, m} \sim \ebra{\kappa m}^{1/3}$ as $m \to \infty$, so the graph of $\tilde{\mathcal{T}}$ lies below that of $id$ when $m$ is large enough. Then by the intermediate value theorem, we deduce the existence of positive fixed points. Let the smallest one be $m_+$. Now we argue that if $\partial_m \tilde{\mathcal{T}}\ebra{m_+} \geq 1$, then the concavity implies $\partial_m \tilde{\mathcal{T}}\ebra{\lambda, m} \geq 1$ for any $\epsilon \leq m \leq m_+$. Then
		\begin{equation*}\label{eq.132}
			\begin{aligned}
				\tilde{\mathcal{T}}\ebra{\lambda, m_+} &= \int_0^\epsilon \partial_m \tilde{\mathcal{T}}\ebra{\lambda, s} \dif s + \int_\epsilon^{m_+} \partial_m \tilde{\mathcal{T}}\ebra{\lambda, s} \dif s\\
				& > \int_0^\epsilon 1 \dif s + \int_\epsilon^{m_+} 1 \dif s > m_+,
			\end{aligned}
		\end{equation*}
		which yields a contradiction. Therefore, $\partial_m \tilde{\mathcal{T}} \ebra{\lambda, m_+} < 1$ and $\tilde{\mathcal{T}}\ebra{\lambda, m} < m$ when $m > m_+$. Thus the positive fixed point $m_+$ is unique.
		
		By \cite[Theorem 3.3.2]{Daw83}, $\partial_m \tilde{\mathcal{T}}\ebra{\lambda, 0} < 1$ when $\sigma$ is sufficiently large, and $\partial_m \tilde{\mathcal{T}}\ebra{\lambda, 0} > 1$ when $\sigma$ is sufficiently close to zero. Moreover, the critical value $\sigma_\star$ such that $\partial_m \tilde{\mathcal{T}}\ebra{\lambda_\star, 0} = 1$ is unique. Combining these results, we deduce that $\partial_m \tilde{\mathcal{T}}\ebra{\lambda, 0} > 1$ when $0 < \sigma < \sigma_\star$, and $\partial_m \tilde{\mathcal{T}}\ebra{\lambda, 0} < 1$ when $\sigma > \sigma_\star$. This completes the proof of the global argument.
		
		We also record the following local statements, which are, in fact, equivalent to the local pitchfork bifurcation theorem. Assume that $\lambda_\star = -2\kappa/\sigma_\star^2$ is the critical parameter such that $\partial_m \tilde{\mathcal{T}}\ebra{\lambda_\star, 0} = -\lambda_\star \cdot \eangle{\ebra{x - \eangle{x}}^2}_{\rho\ebra{\lambda_\star, 0}} = 1$. We have
		\begin{enumerate}
			\item If the transversality condition $\partial^2_{\lambda, m} \tilde{\mathcal{T}} \ebra{\lambda_\star, 0} < 0$ holds, then $\partial_{m}\tilde{\mathcal{T}}\ebra{\lambda, 0} > 1$ for $\lambda \in \ebra{\lambda_\star - \epsilon, \lambda_\star}$, where $\epsilon > 0$ is small.
			\item If the non-degeneracy condition $\partial_{m, m, m}^3 \tilde{\mathcal{T}}\ebra{\lambda_\star, 0} < 0$ holds, then there is only the trivial fixed point $0$ in the critical case $\partial_m \tilde{\mathcal{T}}\ebra{\lambda_\star, 0} = 1$.
		\end{enumerate}
		
		If we expand $\tilde{\mathcal{T}}$ at $\ebra{\lambda_\star, 0}$ up to second order, all terms except those involving $\partial_m$ and $\partial_{\lambda, m}^2$ vanish
		\begin{equation*}\label{eq.133}
			\begin{aligned}
				\tilde{\mathcal{T}}\ebra{\lambda, m} & = \partial_m \tilde{\mathcal{T}}\ebra{\lambda_\star, 0} m + \partial_{\lambda, m}^2 \tilde{\mathcal{T}}\ebra{\lambda_\star, 0} \ebra{\lambda - \lambda_\star}m + \cdots, \\
				& = \ebra{1 + \partial_{\lambda, m}^2 \tilde{\mathcal{T}}\ebra{\lambda_\star, 0}\ebra{\lambda - \lambda_\star}} m + \cdots,
			\end{aligned}
		\end{equation*}
		which implies (i). At the critical parameter $\lambda_\star$, the Taylor expansion with respect to $m$ is
		\begin{equation*}\label{eq.134}
			\tilde{\mathcal{T}}\ebra{\lambda_\star, m} - m = \partial_{m, m, m}^3 \tilde{\mathcal{T}}\ebra{\lambda_\star, 0} m^3/6 + \cdots,
		\end{equation*}
		which implies that $\tilde{\mathcal{T}}\ebra{\lambda_\star, m} - m$ is negative for $m \in \ebra{0, \epsilon}$. By concavity, it cannot become positive for larger $m$. Hence (ii) follows.
	\end{proof}
	
	\begin{figure}[htb]
		\includegraphics[scale=0.95]{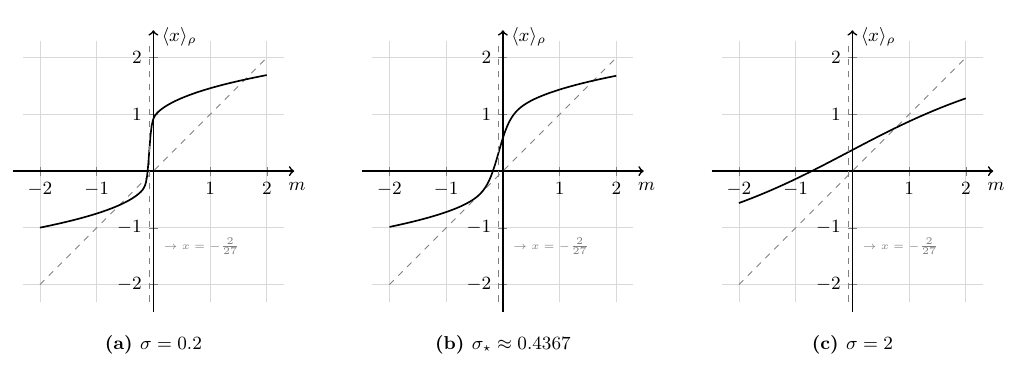}
		\caption{Graphs of $\eangle{x}_{\rho}$ and $id$ with respect to $m$ as $\sigma$ varies, with $\kappa = 1$. In the asymmetric double-well case, the graph is centrally symmetric with respect to the point $(-2/27, 1/3)$. When $0 < \sigma < \sigma_\star$, $\eangle{x}_{\rho}$ has three fixed points; when $\sigma = \sigma_\star$, the graph of $\eangle{x}_{\rho}$ is tangent to $id$; when $\sigma > \sigma_\star$, $\eangle{x}_{\rho}$ has only one fixed point.}\label{sub.2}
	\end{figure}
	
	\begin{proof}[Proof of Example \ref{ex.3}]\label{proof.17}
		We first explain the hidden symmetry in the asymmetric double-well model $V = x^4/4 - x^3/3 - x^2/2$.
		
		\textit{Claim: }
		\begin{enumerate}
			\item (Central symmetry after translation) For any fixed $\lambda$, consider
			\begin{equation*}\label{eq.138}
				\eangle{x}_{\rho}\ebra{\lambda, m} = \int_{\mathbb{R}} x \cdot \rho\ebra{\lambda, m} \dif x, \qquad \rho\ebra{\lambda, m} = \exp\ebra{\lambda \cdot \ebra{\frac{1}{4}x^4 - \frac{1}{3}x^3 - mx}} / \tilde{Z}\ebra{\lambda, m}.
			\end{equation*}
			Then $\eangle{x}_{\rho} - 1/3$ is centrally symmetric with respect to the point $(m_c, 1/3)$, where $m_c = -2/27$.
			\begin{equation*}\label{eq.139}
				\ebra{\eangle{x}_{\rho} - \frac{1}{3}} \ebra{\lambda, m_c + m} = -\ebra{\eangle{x}_{\rho}-\frac{1}{3}} \ebra{\lambda, m_c - m}.
			\end{equation*}
			\item (Concavity by the GHS inequality) $\eangle{x}_\rho\ebra{\lambda, \cdot}$ is strictly concave on the interval $m > m_c$. This implies $\eangle{\ebra{x - \eangle{x}}^3}_\rho\ebra{\lambda, \cdot} < 0$ for $m > m_c$.
		\end{enumerate}
		
		Proof of (i). The central symmetry property originates from the key observation
		\begin{equation*}\label{eq.140}
			\frac{1}{4}x^4 - \frac{1}{3}x^3 - mx = \frac{1}{4}\ebra{x - \frac{1}{3}}^4 - \frac{1}{6}\ebra{x - \frac{1}{3}}^2 - \ebra{m - m_c}\ebra{x - \frac{1}{3}} - \frac{1}{3}m - \frac{1}{108},
		\end{equation*}
		which is even exactly when $m = m_c$. With the substitution $y = x - 1/3$,
		\begin{equation*}\label{eq.141}
			\begin{aligned}
				& \eangle{x}_\rho\ebra{\lambda, m} = \int_{\mathbb{R}} y \cdot \exp\ebra{\lambda \cdot \ebra{\frac{1}{4}y^4 - \frac{1}{6}y^2 - \ebra{m-m_c}y}} / \widetilde{Z\ebra{\lambda, m}} \dif y + \frac{1}{3}, \\
				& \widetilde{Z\ebra{\lambda, m}} = \int_{\mathbb{R}} \exp\ebra{\lambda\cdot \ebra{\frac{1}{4}y^4 - \frac{1}{6}y^2 - \ebra{m - m_c}y}} \dif y.
			\end{aligned}
		\end{equation*}
		We then deduce that
		\begin{equation*}\label{eq.142}
			\widetilde{Z\ebra{\lambda, m_c + m}} = \widetilde{Z\ebra{\lambda, m_c - m}}, \qquad \rho\ebra{\lambda, m_c + m} \ebra{x - 1/3} = \rho\ebra{\lambda, m_c - m}\ebra{-\ebra{x - 1/3}},
		\end{equation*}
		and the result follows.
		
		Proof of (ii). To prove (ii), we use an extended version GHS inequality with particular emphasis on the equality condition \hl{(see Appendix {\ref{appe.GHS}})}. For a potential $\bar{V}$ satisfying
		\begin{equation*}
			\text{ $\bar{V} \in C^3$ even, $\lim\limits_{x \to \pm\infty} \bar{V}\ebra{x} = \infty$, and $\bar{V}^\prime$ convex on $[0, \infty)$},
		\end{equation*}
		and the partition function $Z\ebra{\lambda, m} = \int_{\mathbb{R}} \exp(\lambda\cdot\ebra{\bar{V}\ebra{x} - mx}) \dif x$, we have
		\begin{equation}\label{eq.143}\tag{GHS'}
			\begin{aligned}
				& \dif^3/\text{d} m^3\
				\ln Z\ebra{\lambda, m} = -\lambda^3 \cdot \eangle{\ebra{x - \eangle{x}}^3}_{\rho} \leq 0, \quad \text{ for all } m\geq 0. \\
				& \text{ The inequality is strict for any $m > 0$ if and only if } \bar{V}^{\prime\prime\prime} \not\equiv 0 \text{ on } [0, \infty). \text{ Namely, } \\
				& \text{ } \bar{V} \text{ is not } \text{quadratic or } \rho\ebra{\lambda, m} = \exp\ebra{\lambda \cdot \ebra{\bar{V}\ebra{x} - mx}} / Z\ebra{\lambda, m} \text{ is  not Gaussian. }
			\end{aligned}
		\end{equation}
		In the asymmetric double-well case, taking $\bar{V} = y^4/4 - y^2/6$ and using the identity $\text{d}\ebra{\ln Z}/\text{dm} = -\lambda \cdot \eangle{x}_\rho$, the GHS inequality yields the desired concavity result.

		\underline{The local argument. } Theorem \ref{thm.9} allows us to establish a phase transition point $\ebra{\lambda_\star, \rho\ebra{\lambda_\star, m_\star}}$ where the system exhibits a saddle-node bifurcation.

		We first establish the existence of a critical point $\ebra{\lambda_\star, m_\star}$ satisfying
		\begin{equation}\label{eq.fixed_tangent}
			\eangle{x}\rho\ebra{\lambda_\star, m_\star} = m_\star, \qquad \partial_m \eangle{x}\rho \ebra{\lambda_\star, m_\star} = 1.     
		\end{equation}
		Our main objective is to investigate the small and large noise regimes and draw conclusions via the continuity of $\eangle{x}_\rho$ with respect to temperature. In the small-noise regime, the classical Laplace expansion implies that $\eangle{x}_\rho\ebra{\lambda, m} \to \operatorname{argmin}\{x^4/4 - x^3/3 - mx\}$ as $\lambda \to -\infty$ for $m < m_c$. For any $m \leq -4/27$, the unique minimizer of the effective potential admits an explicit expression:
		\begin{equation*}
			x_{\min}\ebra{m} = \frac{1}{3} + \sqrt[3]{\frac{1}{2}\ebra{m + \frac{2}{27}}+\frac{1}{2}\sqrt{m\ebra{m+\frac{4}{27}}}} + \sqrt[3]{\frac{1}{2}\ebra{m + \frac{2}{27}}-\frac{1}{2}\sqrt{m\ebra{m + \frac{4}{27}}}}. 
		\end{equation*}
		
		In particular, $x_{\min}\ebra{-4/27} = -1/3 < -4/27$. This implies that, for sufficiently large $|\lambda|$, the graph of $\eangle{x}_\rho\ebra{\lambda, \cdot}$ lies locally below the identity line near $m=-4/27$. Given that the graph passes through the point $\ebra{-2/27, 1/3}$, and by invoking its strict concavity alongside the asymptotic growth estimate $\eangle{x}_\rho\ebra{\lambda, \cdot} \sim (m)^{1/3}$ as $m \to -\infty$, we deduce that the map $\eangle{x}_\rho\ebra{\lambda, \cdot}$ possesses exactly two fixed points in $(-\infty, m_c]$ in the low-temperature regime. 
		
		In the large noise regime, by applying the change of variable $y = x / \sqrt{\sigma}$ within the integral and performing an asymptotic expansion, we obtain an expression that holds for any $m$
		\begin{equation*}
			\begin{aligned}
				\eangle{x}_\rho\ebra{\lambda, m} & = \frac{\int_{\mathbb{R}}y \cdot \exp(-\frac{1}{2}y^4 + \frac{2}{3}y^3 \cdot \sqrt[4]{\frac{-\lambda}{2}} + 2my \cdot (\sqrt[4]{\frac{-\lambda}{2}})^3) \dif y}{\int_{\mathbb{R}} \exp(-\frac{1}{2}y^4 + \frac{2}{3}y^3 \cdot \sqrt[4]{\frac{-\lambda}{2}} + 2my \cdot (\sqrt[4]{\frac{-\lambda}{2}})^3) \dif y} \cdot \ebra{\sqrt[4]{\frac{-\lambda}{2}}}^{-1}\\
				& = \frac{\frac{2}{3} \sqrt[4]{\frac{-\lambda}{2}}\cdot \int_{\mathbb{R}} y^4 \exp(-\frac{1}{2}y^4) \dif y + O((\sqrt[4]{\frac{-\lambda}{2}})^3)}{\int_{\mathbb{R}}\exp(-\frac{1}{2}y^4) \dif y + O(\sqrt[4]{\frac{-\lambda}{2}})}\cdot \ebra{\sqrt[4]{\frac{-\lambda}{2}}}^{-1} \\
				& = \frac{1}{3} + O((\sqrt[4]{\frac{-\lambda}{2}})^2) \qquad \text{as $\lambda \to 0$}. 
			\end{aligned}
		\end{equation*}
		This implies that the graph of $\eangle{x}_\rho\ebra{\lambda, \cdot}$ lies above that of the identity map in the large noise regime. 
		
		Since $\eangle{x}_\rho\ebra{\lambda, \cdot}$ changes continuously with respect to $\lambda$, the monotonicity inequality established in Appendix \ref{numer} implies that $\partial_\lambda \eangle{x}_\rho\ebra{\lambda, m} > 0$ for any $m < m_c$. Furthermore, the GHS inequality guarantees the convexity of $\eangle{x}_\rho \ebra{\lambda, m}$ on $m \in (-\infty, m_c]$ for any $\lambda < 0$. If we define
		\begin{equation*}
			l_\lambda := \inf_{m \leq m_c} \{\eangle{x}_\rho \ebra{\lambda, m} - m\}, 
		\end{equation*}
		it follows that $l_\lambda$ is strictly decreasing as $\lambda$ decreases. Consequently, there exists a unique value $\lambda_\star$ such that $l_{\lambda_\star} = 0$. Letting $m_\star$ denote the unique minimizer of $\eangle{x}_\rho \ebra{\lambda_\star, m} - m$, it is straightforward to show that $\ebra{\lambda_\star, m_\star}$ satisfies \eqref{eq.fixed_tangent}.
		
		Next, we verify the transversality and non-degeneracy conditions. Specifically, the transversality condition is guaranteed by the monotonicity inequality established in Appendix \ref{numer}, the property $m_\star = m_1\ebra{\rho_\star} < m_c$, the faster-than-quadratic growth of the even effective potential $\bar{V}$, alongside the central symmetry of $\langle x \rangle_{\rho_\star}$
		\begin{equation*}\label{eq.151.2}
			\begin{aligned}
				\sqrt{2}\pi^{-1/4} \cdot P_{\lambda_\star} \partial_\lambda F\ebra{\lambda_\star, \rho_\star}/\tilde{v}_\star & = \lambda_\star \cdot \left(\eangle{x\cdot \ebra{\frac{1}{4}x^4 - \frac{1}{3}x^3 - m_\star x}}_{\rho_\star} - \eangle{x}_{\rho_\star}\cdot \eangle{\frac{1}{4}x^4 - \frac{1}{3}x^3 - m_\star x}_{\rho_\star}\right) \\
				& = \lambda_\star \cdot \partial_{\lambda} \eangle{x}_{\rho_\star}\ebra{\lambda, m_1\ebra{\rho_\star}} < 0. 
			\end{aligned}
		\end{equation*}
		
		The non-degeneracy condition follows from the strict form of the GHS inequality given above, the property $m_1\ebra{\rho_\star} < m_c$, more than quadratic growth of even effective potential $\bar{V}$, combined with the central symmetry of $\langle x \rangle_{\rho_\star}$
		\begin{equation*}\label{eq.151.3}
			\begin{aligned}
				\sqrt{2}^{-1}\cdot\pi^{1/4} \cdot P_{\lambda_\star} \partial_{\delta \rho, \delta \rho}^2 F \ebra{\lambda_\star, \rho_\star} [\tilde{v}_\star, \tilde{v}_\star]/\tilde{v}_\star = \lambda_\star\cdot \eangle{\ebra{x - \eangle{x}}^3}_{\rho_\star} < 0.
			\end{aligned}
		\end{equation*}
		
		Since the occurrence of saddle-node bifurcation at the bifurcation point $\ebra{\lambda_\star, \rho\ebra{\lambda_\star, m_\star}}$ in the left well, we can determine the local existence of two invariant measures $\rho\ebra{\lambda, m_-^{(1)}}$ and $\rho\ebra{\lambda, m_-^{(2)}}$ in the left well when $\lambda \in \ebra{\lambda_\star - \epsilon, \lambda_\star}$. From the global argument, we know that there is also an invariant measure $\rho\ebra{\lambda, m_+}$ in the right well whenever $\lambda < 0$, with $m_+$ clearly different from $m_-^{(1)}, \ m_-^{(2)}$. By the concavity when $m > m_c$ and the central symmetry property, this model has at most 3 invariant measures. Thus we can determine that $\ebra{\lambda_\star, \rho\ebra{\lambda_\star, m_\star}}$ is a discontinuous phase transition point.
		
		Due to the saddle-node bifurcation at $\ebra{\lambda_\star, \rho\ebra{\lambda_\star, m_\star}}$, the system locally exhibits two invariant measures $\rho\ebra{\lambda, m_-^{(1)}}$ and $\rho\ebra{\lambda, m_-^{(2)}}$ in the left well for $\lambda \in \ebra{\lambda_\star - \epsilon, \lambda_\star}$. Globally, there also exists an invariant measure $\rho\ebra{\lambda, m_+}$ in the right well whenever $\lambda < 0$, with $m_+$ being clearly distinct from $m_-^{(1)}$ and $m_-^{(2)}$. Combined with the concavity property, which restricts the model to at most three invariant measures, we establish that $\ebra{\lambda_\star, \rho\ebra{\lambda_\star, m_\star}}$ constitutes a discontinuous phase transition point.
		
		\underline{The global argument.} From the concavity of $\eangle{x}_\rho$ on $m > m_c = -2/27$, its symmetry about $m = m_c$, the identity $\eangle{x}_\rho\ebra{\lambda, m_c} = 1/3$, and the growth estimate $\eangle{x}_\rho \sim m^{1/3}$ as $m \to \infty$, we deduce that, for any $\lambda < 0$, the graph of $\eangle{x}_\rho\ebra{\lambda, \cdot}$ intersects the graph of $id$ exactly once on $\ebra{m_c, +\infty}$ and at most twice on $\ebra{-\infty, m_c}$. The two inequalities, combined with the definition of $l_\lambda$, imply (\ref{eq.var.1}) and (\ref{eq.var.2}). 
		
		By leveraging the concavity of $\eangle{x}_\rho$ for $m > m_c = -2/27$, its symmetry about $m = m_c$, the relation $\eangle{x}_\rho\ebra{\lambda, m_c} = 1/3$, the growth bound $\eangle{x}_\rho \sim m^{1/3}$ as $m \to \infty$, and the monotonicity inequality in Appendix \ref{numer}, it follows that the graph of $\eangle{x}_\rho\ebra{\lambda, \cdot}$ intersects the diagonal line exactly once on $[m_c, +\infty)$ and at most twice on $(-\infty, m_c]$ for any $\lambda < 0$. These observations, combined with the GHS inequality, the monotonicity inequality with respect to temperature, and the definition of $l_\lambda$, yield \eqref{eq.var.1} and \eqref{eq.var.2}. 
	\end{proof}
	
	\begin{remark}\label{rem_asym_double_wells}
		Consider the problem
		\begin{align}\label{eq.144}
				& V_s = \frac{1}{4}x^4 - \frac{s}{3}x^3 - \frac{1}{2}x^2, \qquad \rho_s\ebra{\lambda, m} = \frac{1}{Z\ebra{\lambda, m}}\cdot \exp\ebra{\lambda\cdot \ebra{V_s + \frac{1}{2}x^2 - mx}}, \ s\in [0, 1]; \notag \\
				& \ebra{\lambda_{\star}^{(s)}, m_{\star}^{(s)}} \text{ solves} \begin{cases}
					& m = \mathcal{T}_1^{(s)}\ebra{\lambda, m} := \int_{\mathbb{R}} x \cdot \rho_s\ebra{\lambda, m} \dif x, \\
					& \lambda = \mathcal{T}_2^{(s)}\ebra{\lambda, m} := -\left(\int_{\mathbb{R}} x^2 \cdot \rho_{s}\ebra{\lambda, m} \dif x - \left(\int_{\mathbb{R}} x \cdot \rho_s\ebra{\lambda, m} \dif x\right)^2\right)^{-1}.
				\end{cases}
		\end{align}
		Note that $\mathcal{T}^{(s)} := \ebra{\mathcal{T}^{(s)}_1, \mathcal{T}_2^{(s)}}$ is obviously a homotopy as $s\in[0,1]$. For $s = 0$, Dawson \cite{DG89} established the uniqueness of the phase transition point, which reduces to the uniqueness of the fixed point of $\ebra{\mathcal{T}^{(s)}_1, \mathcal{T}_2^{(s)}}$. Owing to the homotopy continuation algorithm of Kellogg-Li-Yorke \cite{KLY76}, the continuity method yields an efficient constructive numerical scheme for locating the phase transition point in the discontinuous regime. The code 
		for the homotopy continuation method is available at \url{https://github.com/JunlangHu/MVPTVBT_codes}. 
		
	\end{remark}
	
	\begin{proof}[Proof of Example \ref{ex.2}]\label{proof.16}
		We first show that $\sigma_\star^{(1)} > \sigma_\star^{(2)}$. To this end, consider the following second-order moment problem
		\begin{equation}\label{eq.ex.2.1}
			\frac{\sigma\ebra{a}^2}{2\kappa} = \int_{\mathbb{R}} x^2 \rho_a\ebra{x} \dif x, \qquad \rho_a \ebra{x} = \exp(-\frac{2}{\sigma\ebra{a}^2} \cdot \ebra{a\cdot \ebra{\frac{x^4}{4} - \frac{x^2}{2}} + \kappa \cdot \frac{x^2}{2}}) \cdot Z\ebra{a}^{-1}.
		\end{equation}
		We show that the critical temperature $\sigma\ebra{a}$ is monotonically increasing in $a$ for all $a > 0$. To see this, observe that
		\begin{equation*}\label{eq.ex.2.2}
			\partial_a \rho_a = \rho_a \cdot m_2^{-1}\ebra{\rho_a} \cdot (Q - \int_{\mathbb{R}}Q \cdot \rho \dif x),
		\end{equation*}
		where
		$$Q = \frac{2 \sigma^\prime\ebra{a}}{\kappa \sigma\ebra{a}} \cdot (\frac{ax^4}{4} + \frac{\ebra{\kappa - a}x^2}{2}) - \kappa^{-1}\cdot \ebra{\frac{x^4}{4} - \frac{x^2}{2}}. $$
		Differentiating both sides of \eqref{eq.ex.2.1} and using
		$$m_4\ebra{\rho_a} = m_2\ebra{\rho_a}, \qquad m_6\ebra{\rho_a} = \frac{3\kappa}{a} m_2\ebra{\rho_a}^2 + (1 - \frac{\kappa}{a}) m_2\ebra{\rho_a}, $$
		we obtain
		\begin{equation*}\label{eq.ex.2.3}
			\sigma^\prime\ebra{a} = \frac{\sigma\ebra{a}\cdot \ebra{a + \kappa - (3\kappa + a) m_2\ebra{\rho_a}}}{2a \cdot \ebra{a - \kappa + (3\kappa - a)m_2\ebra{\rho_a}}}.
		\end{equation*}
		Note that by the Hankel inequalities, the critical invariant measure satisfies
		\begin{equation*}\label{eq.ex.2.4}
			\Delta_4 = 2 \kappa^2 / a^2 \cdot m_2\ebra{\rho_a}^5 \cdot \ebra{3m_2\ebra{\rho_a} - 1} \cdot ((1 - m_2\ebra{\rho_a}) + \kappa / a \cdot \ebra{1 - 3 m_2\ebra{\rho_a}}) > 0,
		\end{equation*}
		which implies
		\begin{equation}\label{eq.ex.2.5}
			\frac{1}{3} < m_2\ebra{\rho_a} < \frac{a + \kappa}{a + 3 \kappa}.
		\end{equation}
		Equation (\ref{eq.ex.2.5}) implies that $\sigma^\prime\ebra{a} > 0$ for all  $a > 0$.
		
		\underline{The local argument.} For the first phase transition as the temperature decreases from high to low, choose the trivial solution family $\{\rho_{0}\ebra{\lambda, \ebra{0, 0}}\}$. Since $\rho_0$ is a product measure, the covariance matrix is
		\begin{equation*}\label{eq.135}
			\text{Cov}\ebra{\rho_0\ebra{\lambda, \ebra{0, 0}}} = \text{diag}\{\eangle{x_1^2}_{\rho_0^{(1)}}, \eangle{x_2^2}_{\rho_0^{(2)}}\},
		\end{equation*}
		with eigenvalues $\eangle{x_1^2}_{\rho_0^{(1)}}$ and $\eangle{x_2^2}_{\rho_{0}^{(2)}}$ and eigenvectors $e_1$ and $e_2$ respectively. As the temperature drops, Dawson's criterion is first hit by the eigenvalue in $x_1$ direction, along which the well is deeper.  We can also check that the transversality, pitchfork, and the non-degenerate condition are the same as the $1$-dimensional case. Then by Theorem \ref{thm.7}, we deduce that a pitchfork bifurcation happens in the $e_1$ direction.
		
		For the second-time phase transitions, we have shown in Example \ref{ex.1} that for $0 < \sigma < \sigma_\star^{(1)}$
		\begin{equation}\label{eq.136}
			\begin{aligned}
				& \eangle{x_1^2}_{\rho_0\ebra{\lambda, \ebra{0, 0}}} = \eangle{x_1^2}_{\rho_0^{(1)}\ebra{\lambda, 0}} > \frac{\sigma^2}{2\kappa}, \\
				& \ebra{\eangle{x_1^2} - \eangle{x_1}^2}_{\rho_{0}\ebra{\lambda, \ebra{m^{(1)}\ebra{\lambda}, 0}}} = \ebra{\eangle{x_1^2} - \eangle{x_1}^2}_{\rho_0^{(1)}\ebra{\lambda, m^{(1)}\ebra{\lambda}}} < \frac{\sigma^2}{2\kappa}.
			\end{aligned}
		\end{equation}
		Choose $\{\rho_0\ebra{\lambda, \ebra{0, 0}}\}$ and $\{\rho_0 \ebra{\lambda, (m^{(1)}\ebra{\lambda}, 0)}\}$ as the trivial solution families, respectively. Their covariance matrices are
		\begin{equation*}\label{eq.137}
			\begin{aligned}
				& \text{Cov}\ebra{\rho_0\ebra{\lambda, (0,0)}} = \text{diag}\{\eangle{x_1^2}_{\rho_0^{(1)}\ebra{\lambda, 0}}, \eangle{x_2^2}_{\rho_0^{(2)}\ebra{\lambda, 0}}\},\\
				& \text{Cov}\ebra{\rho_0\ebra{\lambda, \ebra{m^{(1)}\ebra{\lambda}, 0}}} = \text{diag}\{\ebra{\eangle{x_1^2} - \eangle{x_1}}^2_{\rho_0^{(1)}\ebra{\lambda, m^{(1)}\ebra{\lambda}}}, \eangle{x_2^2}_{\rho_0^{(2)}\ebra{\lambda, 0}}\}.
			\end{aligned}
		\end{equation*}
		Inequalities in (\ref{eq.136}) exclude phase transitions in the $x_1$ direction. For either of these two trivial solutions, when the temperature drops to $\sigma_{\star}^{(2)}$, one can check that the system exhibits a pitchfork bifurcation along the $x_2$ direction. The transversality, pitchfork, and non-degenerate conditions are the same as the $1$-dimensional case.
		
		\underline{The global argument.} This can be checked by direct verification.
	\end{proof}
	
	\begin{proof}[Proof of Example \ref{ex.5}]\label{proof.cubic_inter}
		By a process similar to that in Lemma \ref{thm.1} and Appendix \ref{appendix.D}, we represent the operator $K$ in the transformed Hermite basis as
		\begin{equation*}\label{eq.137.2}
			\begin{pmatrix}
				-\frac{1}{2}m_1 - \frac{1}{3}m_3 & \sqrt{2}m_2^2 - \frac{4}{3}\sqrt{2}m_1 m_3 - \frac{\sqrt{2}}{2}m_2 & k_{1, 3} & k_{1, 4}\\
				\frac{\sqrt{2}}{2}m_2 + \frac{\sqrt{2}}{4} & m_3 + \frac{1}{2}m_1 & m_4 - \frac{1}{4} & k_{2, 4}\\
				-\frac{\sqrt{2}}{2} m_1 & -m_2 & -m_3 + \frac{1}{2}m_1 & -\frac{\sqrt{6}}{3}m_4 + \frac{\sqrt{6}}{2} m_2\\
				\frac{\sqrt{3}}{6} & \frac{\sqrt{6}}{6} m_1 & \frac{\sqrt{6}}{6} m_2 - \frac{\sqrt{6}}{12} & \frac{1}{3}m_3 - \frac{1}{2} m_1
			\end{pmatrix}(\rho_\star)
		\end{equation*}
		where
		\begin{equation*}\label{eq.137.3}
			\begin{aligned}
				k_{1, 3} & = \frac{2}{3}\sqrt{2} m_2 m_3 - \sqrt{2} m_1 m_4 - \frac{\sqrt{2}}{3}m_3 + \frac{\sqrt{2}}{4}m_1, \\
				k_{1, 4} & = -\frac{2}{9}\sqrt{3} m_3^2 + \frac{2}{3}\sqrt{3}m_2 m_4 - \frac{2}{3}\sqrt{3} m_1 m_5 - \frac{\sqrt{3}}{3}m_4 - \sqrt{3} m_2^2 + \frac{4}{3}\sqrt{3} m_1 m_3 + \frac{\sqrt{3}}{2} m_2, \\
				k_{2, 4} & = \frac{\sqrt{6}}{3} m_5 + \frac{\sqrt{6}}{6} m_3 - \frac{\sqrt{6}}{2} m_3 - \frac{\sqrt{6}}{4} m_1.
			\end{aligned}
		\end{equation*}
		Its characteristic polynomial is computed as
		\begin{equation*}\label{eq.137.4}
			\ebra{\theta^2 + (-m_1^2m_4 + 2 m_1 m_2 m_3 - m_2^3 + m_2m_4 - m_3^2)} \cdot \theta^2 = (\theta^2 + \Delta_2) \cdot \theta^2.
		\end{equation*}
		Note that $\Delta_2 \geq 0$ by Hankel's inequality. Thus $K$ has no real spectrum other than $0$, and there is no phase transition.
	\end{proof}
	
	\begin{proof}[Proof of Example \ref{ex.6}]\label{proof.18}
		Proof of (I). By the boundedness of $W$, for any sequence $\{\rho_l\}$ converging to $\rho$ in the $L^1$ norm, we have for each $x \in \mathbb{R}$
		\begin{equation*}\label{eq.pr.ex.6.1}
			W * \rho_l \ebra{x} = \int_{\mathbb{R}} -\exp(-(x-y)^2) \cdot \rho_l\ebra{y} \dif y \to W * \rho \ebra{x}.
		\end{equation*}
		Under the assumptions, we have
		\begin{equation}\label{eq.pr.ex.6.2}
			\begin{aligned}
				& V + \kappa \cdot W * \rho_l \geq C_1 |x|^{2+\delta} - C_2  - \sup_l \norm{\rho_l}_{L^1} \quad (C_1 > 0), \\
				& V + \kappa \cdot W * \rho_l \leq C_1^\prime |x|^{p+1} + C_2^\prime + \sup_l \norm{\rho_l}_{L^1}.
			\end{aligned}
		\end{equation}
		Consequently, by the dominated convergence theorem, it follows that as $l \to \infty$,
		\begin{equation*}\label{eq.pr.ex.6.3}
			Z\ebra{\lambda, \rho_l} \to Z\ebra{\lambda, \rho}, \qquad \norm{\mathcal{T} \rho_l - \mathcal{T}\rho}_{L^1} \to 0.
		\end{equation*}
		Furthermore, if $\{\rho_l^\prime\}$ is a bounded sequence in $L^1(\mathbb{R})$, then its image $\{|\cdot|^q \mathcal{T}\rho_l^\prime\}$ $(q \geq 1)$ remains bounded in $L^1(\mathbb{R})$. Proceeding as in the proof of Lemma \ref{thm.1}, we conclude that the operator $\mathcal{T}$ is completely continuous.
		
		Next, we show the existence of a family of symmetric invariant measures using Tikhonov's fixed-point theorem, following the approach in \cite[Theorem 3.4]{Sch86}. Let $\mathcal{M}(\mathbb{R})$ be the space of Radon measures on $\mathbb{R}$. For a sufficiently large constant $N_0$, we define the set
		$$\mathcal{N}:=\{\mu \in \mathcal{P}\ebra{\mathbb{R}}: \mu\ebra{
			-A} = \mu\ebra{A} \text{ for any Borel set $A$}, \ \int_{\mathbb{R}} |x|^2 \cdot \mu(\dif x) \leq N_0\}. $$
		
		Since $\mathcal{M}(\mathbb{R})$ equipped with the weak topology is a locally convex topological vector space and $\mathcal{T}$ is weakly continuous, Tikhonov's fixed-point theorem applies. By the symmetry of $V$ and the definition of $\mathcal{N}$, the set $\mathcal{N}$ is a compact convex subset of $\mathcal{M}(\mathbb{R})$ satisfying $\mathcal{T}\mathcal{N} \subset \mathcal{N}$. Consequently, Tikhonov's fixed-point theorem ensures the existence of a symmetric invariant measure in $\mathcal{N}$. Finally, assertion (II) follows directly from Theorem \ref{thm.7}. 
	\end{proof}
	
	\appendix
	\addcontentsline{toc}{section}{Appendices} 
	\addtocontents{toc}{\protect\setcounter{tocdepth}{0}}
	\section{A coercivity estimate under (H1) and (H3)}\label{appe.1}
	We estimate the lower growth bound for $V$ near infinity:
	\begin{lemma}\label{lem.1}
		Under assumptions (H1) and (H3), $V$ is coercive with at least quadratic growth; namely, for $C = C\ebra{\alpha, L, V, \delta}$
		\begin{equation}\label{eq.20}
			V\ebra{x} \geq \alpha/\ebra{4 + 2\delta} \cdot |x|^{2+\delta} - C.
		\end{equation}
	\end{lemma}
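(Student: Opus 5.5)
The approach is to integrate the dissipativity inequality in (H3) along rays, while (H1) supplies the regularity of $V$ needed to do so. Since $V\in C^1$, for any $x\neq 0$ we write $x=r\omega$ with $r=|x|$ and $|\omega|=1$, and use the fundamental theorem of calculus along the segment from the origin:
\begin{equation*}
V(x)-V(0)=\int_0^1 \langle \nabla V(tx), x\rangle \dif t=\int_0^r \langle \nabla V(s\omega),\omega\rangle \dif s .
\end{equation*}
The first inequality in (H3), applied at the point $s\omega$, gives $\langle s\omega,\nabla V(s\omega)\rangle \ge \alpha s^{2+\delta}-\beta$. Hence $\langle \nabla V(s\omega),\omega\rangle\ge \alpha s^{1+\delta}-\beta/s$ for $s>0$.

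The main obstacle is the singularity of $\beta/s$ at $s=0$, so we split the integral at $s=1$. On $[0,1]$ we do not use (H3). Instead, $V$ is $C^1$, so $|\nabla V|$ is bounded on the closed unit ball by some $M_V$, and the contribution is at least $-M_V$. Alternatively, (H1) with $y=0$ yields $\langle x,\nabla V(x)\rangle\ge -L|x|^2+\langle x,\nabla V(0)\rangle$, which gives the same local lower bound with an explicit constant depending on $L$ and $|\nabla V(0)|$; this accounts for the dependence of $C$ on $L$. On $[1,r]$ we obtain
\begin{equation*}
\int_1^r \Bigl(\alpha s^{1+\delta}-\frac{\beta}{s}\Bigr)\dif s=\frac{\alpha}{2+\delta}\bigl(r^{2+\delta}-1\bigr)-\beta\log r .
\end{equation*}

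It remains to absorb the logarithm. Since $\beta\log r\le \frac{\alpha}{2(2+\delta)}r^{2+\delta}+C'(\alpha,\beta,\delta)$ for all $r\ge1$, we get for $|x|\ge 1$
\begin{equation*}
V(x)\ge \frac{\alpha}{4+2\delta}|x|^{2+\delta}-C,
\end{equation*}
where $C$ collects $-V(0)$, $M_V$ (or its (H1)-based substitute), $\alpha/(2+\delta)$ and $C'$. For $|x|<1$ the bound holds trivially after enlarging $C$ by $\max_{|x|\le1}|V|+\alpha/(4+2\delta)$. The resulting constant depends only on $\alpha,\beta,\delta,L$ and $V$ near the origin, which gives \eqref{eq.20}. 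The final growth exponent $2+\delta\ge 2$ yields the claimed at-least-quadratic growth.
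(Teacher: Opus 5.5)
Your proof is correct. It shares the paper's basic strategy of integrating along the ray from the origin and using (H3) away from the origin, but it cuts the ray at a different place, and that changes which hypotheses are needed.

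The paper splits $\int_0^1 \frac{d}{dt}V(tx)\,dt$ at the fixed fraction $t=\varepsilon$ of the segment. The inner piece $[0,\varepsilon x]$ therefore grows with $|x|$ and cannot be bounded by a compact-set estimate. The paper controls it with (H1) relative to $\nabla V(0)$, which produces the terms $-L\varepsilon^2|x|^2/2$ and $\varepsilon\langle x,\nabla V(0)\rangle$. These are then dominated by the (H3) contribution $\frac{\alpha}{2+\delta}(1-\varepsilon^{2+\delta})|x|^{2+\delta}$ by choosing $\varepsilon^2<\alpha/(2\alpha+(2+\delta)L)$. The singular part becomes the harmless constant $\beta\log\varepsilon$.

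You instead cut at the fixed radius $s=1$. Your inner piece lies in the unit ball, so continuity of $\nabla V$ suffices, and the singular part becomes $-\beta\log|x|$, which you absorb into half of the leading term.

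Your route is simpler, needs no tuning of $\varepsilon$ or sign convention on $L$, and shows that (H1) is superfluous for this lemma: only $V\in C^1$ and the first inequality of (H3) are used. The paper's route uses (H1) essentially. In exchange, its constant is expressed through $V(0)$, $\nabla V(0)$, $L$ and $\beta$ rather than through $\max_{|y|\le 1}|\nabla V(y)|$. Your (H1)-based alternative on $[0,1]$ recovers that same dependence, so the difference in constants is cosmetic.
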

	\begin{proof}\label{proof.1}
		The one-sided Lipschitz condition implies that the negative part of $V$ grows at most quadratically and is therefore bounded on bounded regions. Weak dissipation implies blow-up near infinity. For small $\varepsilon > 0$, we calculate
		\begin{equation*}\label{eq.19}
			\begin{aligned}
				V\ebra{x} - V\ebra{0} & = \left(\int_0^{\varepsilon} + \int_{\varepsilon}^1 \right) \text{d} V\ebra{tx} / \text{d} t \dif t \\
				& = \int_0^{\varepsilon} \eangle{tx - 0, \nabla V\ebra{tx} - \nabla V\ebra{0}} / t \dif t + \int_0^{\varepsilon} \eangle{tx, \nabla V\ebra{0}} / t \dif t + \int_{\varepsilon}^1 \eangle{tx, \nabla V\ebra{tx}} / t \dif t \\
				& \geq \int_0^{\varepsilon} -L \cdot |tx|^2/t \dif t + \varepsilon \cdot \eangle{x, \nabla V\ebra{0}} + \int_{\varepsilon}^1 \ebra{\alpha \cdot |tx|^{2+\delta} - \beta} / t \dif t\\
				&  \geq \ebra{\alpha/\ebra{2+\delta}\cdot\ebra{1 - \epsilon^{2+\delta}} - L/2\cdot\epsilon^2} \cdot |x|^{2+\delta} + \varepsilon \cdot \eangle{x, \nabla V\ebra{0}} + \beta \cdot \log \varepsilon - L \epsilon^2/2.
			\end{aligned}
		\end{equation*}
		Setting $0 < \varepsilon < \min\{1, \ebra{\alpha / \ebra{2\alpha + \ebra{2+\delta}L}}^{1/2}\}$ gives the desired result.
	\end{proof}
	
	\section{An introduction to abstract bifurcation theorems} \label{bifur}
	
	\subsection{Global pitchfork bifurcation}
	Assume that we have a bifurcation equation describing fixed points of the equation under investigation on an abstract Banach space $X$ in the following specific form:
	\begin{equation}\label{eq.29}
		\begin{aligned}
			& F\ebra{\lambda, x} = 0, \quad J := B_{\delta_1}\ebra{\lambda_0},\\
			& F: \cup_{\lambda \in J} \{\lambda\} \times B_{\delta_2}\ebra{x_0 \ebra{\lambda}} =: \Omega \subset \mathbb{R} \times X \to X,
		\end{aligned}
	\end{equation}
	where $x_0\ebra{\lambda}$ is a known solution for any parameter $\lambda \in J$. We are interested in the critical value $\lambda_0$ such that the number of solutions changes. In that case, the number of solution curves generated from the set $\{\ebra{\lambda, x} \in \Omega: F\ebra{\lambda, x} = 0\}$ ``bifurcate at $\lambda_0$ from $x_0$" in the sense that they become multiple on one side of $\lambda_0$, and unique (exactly the one corresponding to $x_0$) on the other side. The finite-dimensional normal form of pitchfork bifurcation is $\ebra{\lambda - \lambda_0}s + s^3 = 0$. After the transformation $\tilde{F} \ebra{\lambda, \delta x} := F\ebra{\lambda, x_0\ebra{\lambda} + \delta x}$, $\tilde{F}$ depends on $\lambda$ only via the first variable along the trivial solution.
	
	
	We impose the following assumptions on the structure of $F$:
	\textit{\begin{equation}\label{eq.30}
			\begin{aligned}
				& F \in C^1\ebra{\Omega, X} \text{, $x_0\ebra{\lambda}$ is a solution for any parameter $\lambda \in J$, }\\
				& \tilde{F} \ebra{\lambda, \delta x} = \ebra{id - \lambda K_{\lambda}} \delta x + G\ebra{\lambda, \delta x} \text{, where } \tilde{F}\ebra{\lambda, \delta x} = F\ebra{\lambda, x_0\ebra{\lambda} + \delta x}. \\
				& \text{The linear part of $\tilde{F}$ is } id - \lambda K_\lambda, \text{ where $K_\lambda$ is a compact operator for any $\lambda \in J$}. \\
				& \text{The completely continuous higher order term } G = o\ebra{\norm{\delta x}} \text{ uniformly in } \lambda \in J.
			\end{aligned}
	\end{equation}}
	
	Combining results in \cite{CR71, CR73}, \cite[Theorem I.5.1, Chapter I.6, Theorem II.3.3]{Kie12}, and \cite[Chapter 10]{Dei85}, we have the following theorem.

	\begin{theorem}\label{thm.5}
		\rm{(The abstract pitchfork bifurcation theorem due to Crandall-Rabinowitz). }
		\begin{enumerate}
			\item Assume $F\ebra{\lambda, x_0} = 0$ for $\lambda \in J$, $dim\ null \ebra{\partial_x F\ebra{\lambda_0, x_0}} = dim\ null \ebra{\partial_{\delta x} \tilde{F}\ebra{\lambda_0, 0}} = 1$, and $F\in C^3\ebra{\Omega, X}$ and $\partial_x F\ebra{\lambda_0, x_0}$ is a Fredholm operator of index zero. Assume also the transversality condition
			\begin{equation}\label{eq.33}
				\partial_{\lambda, \delta x}^2 \tilde{F}\ebra{\lambda_0, 0} \tilde{v}_0 \notin range \ebra{\partial_{\delta x} \tilde{F}\ebra{\lambda_0, 0}},
			\end{equation}
			where $\tilde{v}_0$ is the unit vector in the one-dimensional space $null\ebra{\partial_x F\ebra{\lambda_0, x_0}} = null\ebra{\partial_{\delta x} \tilde{F}\ebra{\lambda_0, 0}}$. Then there exists a non-trivial continuously differentiable solution curve through $\ebra{\lambda_0, x_0}$, parametrized by a single parameter $s$:
			\begin{equation*}\label{eq.34}
				\{\ebra{\lambda\ebra{s}, x_0 + s \tilde{v}_0 + s \tilde{w}\ebra{s}}: s \in B_{\delta_1}\ebra{0}, \lambda\ebra{0} = \lambda_0, \tilde{w}\ebra{s} = w\ebra{\lambda\ebra{s}, s}\}.
			\end{equation*}
			
			\item When $0$ is a simple eigenvalue (the algebraic and geometric multiplicities both equal $1$), let $P$ be the projection onto the first summand in the orthogonal decomposition $X = null\ebra{id - \lambda_0 K} \oplus range\ebra{id - \lambda_0 K}$. Then the bifurcation formula for the zero solution is
			\begin{equation*}\label{eq.35}
				\lambda^\prime\ebra{0} = - 1/2 \cdot P \partial_{\delta x, \delta x}^2 \tilde{F}\ebra{\lambda_0, 0} [\tilde{v}_0, \tilde{v}_0] / P \partial_{\lambda, \delta x}^2 \tilde{F}\ebra{\lambda_0, 0}\tilde{v}_0,
			\end{equation*}
			and the reduced bifurcation equation can be expanded as
			\begin{equation*}\label{eq.35.1}
				P \partial_{\lambda, \delta x}^2 \tilde{F}\ebra{\lambda_0, 0} \tilde{v}_0 \cdot \ebra{\lambda - \lambda_0}s + \left(1/2\cdot P\partial_{\delta x, \delta x}^2 \tilde{F}\ebra{\lambda_0, 0} [\tilde{v}_0, \tilde{v}_0]\right) \cdot s^2 + \cdots = 0.
			\end{equation*}
			A transcritical bifurcation occurs if $\lambda^\prime\ebra{0} \neq 0$.
			
			In the $\lambda^\prime\ebra{0} = 0$ case, namely we assume
			\begin{equation}\label{eq.35.2}
				\partial_{\delta x, \delta x}^2 \tilde{F}\ebra{\lambda_0, 0} [\tilde{v}_0, \tilde{v}_0] \in range \ebra{\partial_{\delta x} \tilde{F}\ebra{\lambda_0, 0}},
			\end{equation}
			the sign of the second-order derivative
			\begin{equation*}\label{eq.36}
				\begin{aligned}
					\lambda^{\prime \prime}\ebra{0} & = \ebra{ -1/3 \cdot P \partial_{\delta x, \delta x, \delta x}^3 \tilde{F}\ebra{\lambda_0, 0} [\tilde{v}_0, \tilde{v}_0, \tilde{v}_0] \\
						& \quad  - P \partial_{\delta x, \delta x}^2 \tilde{F}\ebra{\lambda_0, 0} [\tilde{v}_0, 2\partial_s w\ebra{\lambda_0, 0}]} / P \partial_{\lambda, \delta x}^2 \tilde{F}\ebra{\lambda_0, 0}\tilde{v}_0, \\
				\end{aligned}
			\end{equation*}
			with
			$$
			\partial_s w\ebra{\lambda_0, 0}   =  -1/2 \cdot \ebra{\partial_{\delta x} \tilde{F}\ebra{\lambda_0, 0}}^{-1} \ebra{id - P} \partial_{\delta x, \delta x}^2 \tilde{F}\ebra{\lambda_0, 0} [\tilde{v}_0, \tilde{v}_0],
			$$
			dominates the type of bifurcation. If the non-degenerate condition $\lambda^{\prime \prime}\ebra{0} \neq 0$ is satisfied, which is
			\begin{equation}\label{eq.36.1}
				\begin{aligned}
					& 1/3\cdot \partial_{\delta x, \delta x, \delta x}^3 \tilde{F}\ebra{\lambda_0, 0}[\tilde{v}_0, \tilde{v}_0, \tilde{v}_0]  \\
					& + \partial_{\delta x, \delta x}^2 \tilde{F} \ebra{\lambda_0, 0} [\tilde{v}_0, 2\partial_s w\ebra{\lambda_0, 0}] \notin \text{range} \ebra{\partial_{\delta x} \tilde{F}\ebra{\lambda_0, 0}},
				\end{aligned}
			\end{equation}
			then a pitchfork bifurcation occurs. The reduced bifurcation equation can be further expanded as
			\begin{equation*}\label{eq.36.1.1}
				\begin{aligned}
					& \left( P \partial_{\lambda, \delta x}^2 \tilde{F}_0 \ebra{\tilde{v}_0}\right) \cdot \ebra{\lambda - \lambda_0}s + \left(1/2\cdot P\partial_{\delta x, \delta x}^2 \tilde{F}\ebra{\lambda_0, 0} [\tilde{v}_0, 2\partial_s w_0]\right. \\
					& \left. + 1/6 \cdot P\partial_{\delta x, \delta x, \delta x}^3 \tilde{F}\ebra{\lambda_0, 0}[\tilde{v}_0, \tilde{v}_0, \tilde{v}_0]\right) \cdot s^3 + \cdots = 0,
				\end{aligned}
			\end{equation*}
			and $\lambda^{\prime\prime} \ebra{0} > 0$ corresponds to supercritical bifurcation, while $\lambda^{\prime\prime} \ebra{0} < 0$ corresponds to subcritical bifurcation.
			
			In particular, if $\tilde{F}\ebra{\lambda, \delta x} = \ebra{id - \lambda K_\lambda} \ebra{\delta x} + G\ebra{\lambda, \delta x}$ under assumption \eqref{eq.30}, then $\partial_{\lambda, \delta x}^2 \tilde{F} \ebra{\lambda_0, 0} \tilde{v}_0 = -\partial_\lambda \ebra{\lambda K_\lambda}\tilde{v}_0$, and the second-order mixed partial derivative of $G$ vanishes.
			
			\item The non-zero trivial-solution case can be treated using the chain rule:
			\begin{equation*}\label{eq.36.2}
				\begin{aligned}
					& \partial_{\delta x} \tilde{F} \ebra{\lambda_0, 0} = \partial_x F\ebra{\lambda_0, x_0}, \quad \partial_{\delta x, \delta x}^2 \tilde{F} \ebra{\lambda_0, 0} = \partial_{x, x}^2 F \ebra{\lambda_0, x_0},\\
					& \partial_{\lambda, \delta x}^2 \tilde{F} \ebra{\lambda_0, 0}\tilde{v}_0 = \partial_{\lambda, x}^2 F \ebra{\lambda_0, x_0} \tilde{v}_0 + \partial_{x, x}^2 F \ebra{\lambda_0, x_0}[\partial_\lambda x_0, \tilde{v}_0], \\
					& \partial_{\delta x, \delta x, \delta x}^3 \tilde{F}\ebra{\lambda_0, 0} = \partial_{x, x, x}^3 F \ebra{\lambda_0, x_0}.
				\end{aligned}
			\end{equation*}
			
			\item Moreover, we have the global argument: the nontrivial solution component $C$ crossing over $(\lambda_0, x_0)$ satisfies the dichotomy in Rabinowitz's global bifurcation theorem (see \cite[Theorem II.3.3]{Kie12}). $C$ consists of two subcontinua $C^+$ and $C^-$ such that $C^+ \cap C^- \cap B_\epsilon\ebra{\lambda_0, 0} = \{\ebra{\lambda_0, 0}\}$ and $C^{+, -} \cap \partial B_\epsilon\ebra{\lambda_0, 0} \neq \emptyset$ for all sufficiently small $\epsilon > 0$.
		\end{enumerate}
	\end{theorem}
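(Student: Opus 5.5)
The statement to prove is Theorem \ref{thm.5}, the abstract Crandall--Rabinowitz pitchfork theorem. My plan is the standard Lyapunov--Schmidt reduction, followed by Taylor expansion of the reduced scalar equation and Rabinowitz's global alternative for the last item.

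\textbf{Step 1: Lyapunov--Schmidt splitting.} Since $L_0:=\partial_{\delta x}\tilde F(\lambda_0,0)=id-\lambda_0K_{\lambda_0}$ is Fredholm of index zero with one-dimensional kernel, I split $X=\text{null}(L_0)\oplus \text{range}(L_0)$. I take $P$ to be the projection onto $\text{span}\{\tilde v_0\}$ along $\text{range}(L_0)$; for a compact perturbation of the identity with ascent $1$ this is the spectral projection. I then write $\delta x=s\tilde v_0+sw$ with $w\in\text{range}(L_0)$.

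The equation $\tilde F=0$ splits into two parts:
\begin{equation*}
(id-P)\tilde F(\lambda,s\tilde v_0+sw)=0,\qquad P\tilde F(\lambda,s\tilde v_0+sw)=0.
\end{equation*}
Since $\tilde F(\lambda,0)\equiv0$, I divide by $s$. This gives $\Phi(\lambda,s,w):=s^{-1}\tilde F(\lambda,s(\tilde v_0+w))=\int_0^1\partial_{\delta x}\tilde F(\lambda,ts(\tilde v_0+w))[\tilde v_0+w]\,dt$, which is $C^2$ when $F\in C^3$. At $(\lambda_0,0,0)$ the map $(id-P)\Phi$ has derivative $L_0|_{\text{range}(L_0)}$ in $w$, and this is an isomorphism onto $\text{range}(L_0)$. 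The implicit function theorem therefore gives $w=w(\lambda,s)$ with $w(\lambda,0)=0$.

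\textbf{Step 2: reduced equation and transversality.} The reduced scalar equation is $g(\lambda,s):=P\Phi(\lambda,s,w(\lambda,s))=0$, with $g(\lambda_0,0)=0$. Its $\lambda$-derivative is $\partial_\lambda g(\lambda_0,0)=P\partial^2_{\lambda,\delta x}\tilde F(\lambda_0,0)\tilde v_0$, because the term $\partial_\lambda w$ is killed by $PL_0=0$. This derivative is nonzero exactly under \eqref{eq.33}. A second application of the implicit function theorem then gives $\lambda=\lambda(s)$, which proves item (1).

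\textbf{Step 3: derivatives of $\lambda(s)$.} Differentiating $g(\lambda(s),s)=0$ at $s=0$ gives $\lambda'(0)=-\partial_sg/\partial_\lambda g$. From $\partial_sg=\tfrac12P\partial^2_{\delta x\delta x}\tilde F[\tilde v_0,\tilde v_0]$ I get the stated formula and the transcritical case.

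Under \eqref{eq.35.2} we have $\lambda'(0)=0$. Differentiating the $(id-P)$ equation once in $s$ gives $\partial_sw=-\tfrac12L_0^{-1}(id-P)\partial^2\tilde F[\tilde v_0,\tilde v_0]$. A second differentiation of $g$ gives
\begin{equation*}
\partial_s^2g=\tfrac13P\partial^3\tilde F[\tilde v_0,\tilde v_0,\tilde v_0]+P\partial^2\tilde F[\tilde v_0,2\partial_sw].
\end{equation*}
Then $\lambda''(0)=-\partial_s^2g/\partial_\lambda g$, and the cubic normal form follows from Taylor's theorem. The chain-rule identities of item (3) come from differentiating $F(\lambda,x_0(\lambda)+\delta x)$ directly. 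The statement on the mixed derivative for $\tilde F=(id-\lambda K_\lambda)\delta x+G$ follows because $G=o(\|\delta x\|)$ uniformly in $\lambda$, so $\partial_{\delta x}G(\lambda,0)\equiv0$ for all $\lambda$ and hence its $\lambda$-derivative vanishes.

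\textbf{Step 4: global alternative.} For item (4) I invoke Rabinowitz's global theorem \cite[Theorem II.3.3]{Kie12}. Its hypotheses are that $\tilde F=id-\lambda K_\lambda+G$ has compact $K_\lambda$ and completely continuous $G$ (given by \eqref{eq.30}), and that $\lambda_0$ is a characteristic value of odd algebraic multiplicity (here multiplicity $1$). The crossing of the Leray--Schauder index, which the theorem needs, is guaranteed by transversality. The splitting of $C$ into $C^\pm$ uses the refinement in \cite[Chapter II.5]{Kie12} or \cite{Dei85}, together with the local curve from Step 1, which enters $\partial B_\epsilon$ for $s>0$ and for $s<0$.

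\textbf{Main obstacle.} I expect the main obstacle to be the global step when $K_\lambda$ depends on $\lambda$ and $\tilde F$ is defined only on the tubular set $\Omega$. I would handle it by working in the shifted variable $\delta x$, where the trivial branch is identically zero, and by checking that the degree jump still holds via homotopy invariance of the degree on small balls.
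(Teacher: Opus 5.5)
Your proposal is correct and is essentially the standard Crandall--Rabinowitz/Kielh\"ofer argument, which the paper does not reprove but simply invokes by citing \cite{CR71, CR73, Kie12, Dei85}: Lyapunov--Schmidt reduction of the $s$-divided map, the implicit function theorem applied twice, Taylor expansion of the reduced scalar equation, and Rabinowitz's global theorem with the Leray--Schauder index jump supplied by transversality. Two small slips do not affect the argument: in item (1) you should split with arbitrary complements of $\text{null}(L_0)$ and $\text{range}(L_0)$, since algebraic simplicity is only assumed from item (2) on; and when $K_\lambda$ genuinely depends on $\lambda$ you only get $w(\lambda_0,0)=0$, not $w(\lambda,0)=0$ for all $\lambda$, but $w(\lambda_0,0)=0$ is all your derivative computations actually use.
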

	
	\subsection{Local saddle-node bifurcation}
	A saddle-node bifurcation describes the phenomenon in which two equilibria collide and disappear, or appear from nothing. It may correspond to a discontinuous phase transition. A finite-dimensional normal form of a saddle-node bifurcation is $\ebra{\lambda - \lambda_0} - s^2 = 0$. \textit{Note that $\bar{x}_0$ is no longer a solution for all admissible $\lambda$ in this case.} We replace the preceding assumptions by the following saddle-node assumptions:
	\begin{equation}\label{eq.47}
		\begin{aligned}
			& F \in C^2\ebra{\Omega, X} \text{, with } F\ebra{\lambda_0, \bar{x}_0} = 0 \text{ for some } \ebra{\lambda_0, \bar{x}_0} \in \Omega; \\
			& 0 \text{ is a simple eigenvalue of the Fredholm operator $\partial_x F\ebra{\lambda_0, \bar{x}_0}$ with index 0. }
		\end{aligned}
	\end{equation}
	
	\begin{theorem}[Saddle-node Bifurcation Theorem]\label{thm.6}
		Besides (\ref{eq.47}), assume additionally the transversality condition
		\begin{equation}\label{eq.48}
			\partial_{\lambda} F\ebra{\lambda_0, \bar{x}_0} \notin range \ebra{\partial_x F\ebra{\lambda_0, \bar{x}_0}}.
		\end{equation}
		Then
		\begin{enumerate}
			\item There is a continuously differentiable local solution curve through $\ebra{\lambda_0, \bar{x}_0}$ that locally contains all solutions of $F\ebra{\lambda, x} = 0$:
			\begin{equation*}\label{eq.49}
				\{\ebra{\lambda\ebra{s}, \bar{x}_0 + s \tilde{v}_0 + s \tilde{w}\ebra{s}}: s \in B_{\delta_1}\ebra{0}, \lambda\ebra{0} = \lambda_0, \tilde{w}\ebra{s} = w\ebra{\lambda\ebra{s}, s}\}.
			\end{equation*}
			\item The following bifurcation formula holds
			\begin{equation*}\label{eq.50}
				\lambda^\prime\ebra{0} = 0, \quad \lambda^{\prime\prime}\ebra{0} = - P\partial_{xx}^2 F\ebra{\lambda_0, \bar{x}_0}[\tilde{v}_0, \tilde{v}_0] / P\partial_{\lambda} F\ebra{\lambda_0, \bar{x}_0},
			\end{equation*}
			where $P$ denotes the projection onto the first space of the orthogonal decomposition $X = null\ebra{\partial_x F_0} \oplus range \ebra{\partial_x F_0}$, as before. Moreover, we have the following second-order expansion of the bifurcation equation
			\begin{equation*}\label{eq.51}
				P \partial_\lambda F\ebra{\lambda_0, \bar{x}_0} \cdot \ebra{\lambda - \lambda_0} + \left(1/2 \cdot P \partial_{xx}^2 F\ebra{\lambda_0, \bar{x}_0}[\tilde{v}_0, \tilde{v}_0]\right)\cdot s^2 + \cdots = 0.
			\end{equation*}
			\item Moreover, if the non-degeneracy condition $\lambda^{\prime \prime}\ebra{0} \neq 0$ holds, i.e.
			\begin{equation}\label{eq.51.1}
				\partial_{xx}^2 F\ebra{\lambda_0, \bar{x}_0}[\tilde{v}_0, \tilde{v}_0] \notin range\ebra{\partial_x F\ebra{\lambda_0, \bar{x}_0}},
			\end{equation}
			then a saddle-node bifurcation occurs at $\ebra{\lambda_0, \bar{x}_0}$.
		\end{enumerate}
	\end{theorem}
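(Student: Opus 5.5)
The statement to prove is Theorem \ref{thm.6}, the local saddle-node theorem. I plan to use a Lyapunov--Schmidt reduction followed by the implicit function theorem, applied to the reduced scalar equation with $\lambda$ as the unknown rather than $s$.

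First, set $L := \partial_x F(\lambda_0, \bar{x}_0)$. By (\ref{eq.47}), $L$ is Fredholm of index zero, and $0$ is an algebraically simple eigenvalue. Hence
\[
X = \text{null}(L) \oplus \text{range}(L), \qquad \text{null}(L) = \text{span}\{\tilde{v}_0\},
\]
and $P$ is the associated projection with $\text{range}(L) = \ker P$. I write $x = \bar{x}_0 + s\tilde{v}_0 + y$ with $y \in \text{range}(L)$. The equation $F = 0$ splits into
\[
(id - P)F(\lambda, \bar{x}_0 + s\tilde{v}_0 + y) = 0 \quad\text{and}\quad PF(\lambda, \bar{x}_0 + s\tilde{v}_0 + y) = 0 .
\]
The derivative of the first equation in $y$ at $(\lambda_0, 0, 0)$ is $L|_{\text{range}(L)}$. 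This is an isomorphism onto $\text{range}(L)$ by the open mapping theorem, since the range is closed by the Fredholm property. The $C^2$ implicit function theorem then gives a unique $C^2$ map $y = y(\lambda, s)$ near $(\lambda_0, 0)$ with $y(\lambda_0, 0) = 0$. Differentiating the first equation, I obtain $\partial_s y(\lambda_0, 0) = 0$, because $L\tilde{v}_0 = 0$.

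Next, I define the scalar function $\phi(\lambda, s)$ by $PF(\lambda, \bar{x}_0 + s\tilde{v}_0 + y(\lambda, s)) = \phi(\lambda, s)\tilde{v}_0$. Every solution near $(\lambda_0, \bar{x}_0)$ corresponds to a zero of $\phi$. The first derivatives at the base point are:
\[
\phi(\lambda_0, 0) = 0, \qquad \partial_s\phi(\lambda_0, 0) = PL\tilde{v}_0 = 0,
\]
\[
\partial_\lambda\phi(\lambda_0, 0) = P\partial_\lambda F + PL\,\partial_\lambda y = P\partial_\lambda F .
\]
The last expression is nonzero exactly by the transversality condition (\ref{eq.48}), since $\ker P = \text{range}(L)$. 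So I apply the implicit function theorem in $\lambda$. This gives a $C^2$ function $\lambda(s)$ with $\lambda(0) = \lambda_0$ such that all local zeros of $\phi$ are $(\lambda(s), s)$. Setting $s\tilde{w}(s) := y(\lambda(s), s)$ yields item (1). Here $\tilde{w}$ is well defined and continuous because $y(\lambda(s), 0) = 0$ and $\partial_s y = 0$ at the origin, and $s\tilde{w}(s) = o(s)$.

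For item (2), I differentiate $\phi(\lambda(s), s) = 0$.
\begin{itemize}
\item The first derivative gives $\lambda'(0)\,\partial_\lambda\phi + \partial_s\phi = 0$. Since $\partial_s\phi(\lambda_0,0) = 0$, this forces $\lambda'(0) = 0$.
\item The second derivative at $s = 0$, using $\lambda'(0) = 0$, reduces to $\partial_\lambda\phi\,\lambda''(0) + \partial_{ss}\phi = 0$.
\item To compute $\partial_{ss}\phi(\lambda_0, 0)$, I note that $\partial_s y(\lambda_0, 0) = 0$. Hence $\partial_{ss}\phi\,\tilde{v}_0 = P\partial^2_{xx}F[\tilde{v}_0, \tilde{v}_0] + PL\,\partial_{ss}y = P\partial^2_{xx}F[\tilde{v}_0, \tilde{v}_0]$.
\end{itemize}
This gives the stated formula for $\lambda''(0)$. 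The normal form is the second-order Taylor expansion of $\phi$ at $(\lambda_0, 0)$, using $\partial_s\phi = 0$; the terms in $(\lambda - \lambda_0)s$ and $(\lambda - \lambda_0)^2$ are absorbed into the dots.

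For item (3), the non-degeneracy condition (\ref{eq.51.1}) says $\lambda''(0) \neq 0$. Then $\lambda(s) - \lambda_0 \sim \lambda''(0)s^2/2$ has a strict sign for $s \neq 0$. This means two solutions exist for $\lambda$ on one side of $\lambda_0$ and none on the other, which is the saddle-node picture.

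The main delicate point is the reduction step. One must justify the complementary splitting and the invertibility of $L$ restricted to $\text{range}(L)$, which rests on index zero and algebraic simplicity. One must also keep track of the vanishing $\partial_s y(\lambda_0, 0) = 0$; otherwise extra terms would appear in $\lambda''(0)$. The remaining steps are routine calculus.
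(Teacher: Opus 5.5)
Your argument is correct. It is the standard Lyapunov--Schmidt reduction plus implicit-function-theorem proof found in the references the paper cites for this result (Crandall--Rabinowitz, Kielh\"ofer); the paper does not reprove the theorem itself.

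One small repair: in the saddle-node setting $y(\lambda,0)$ generally does not vanish for $\lambda\neq\lambda_0$. So the continuity of $\tilde w(s)=y(\lambda(s),s)/s$ at $s=0$, and the claim $s\tilde w(s)=o(s)$, should rest on $y(\lambda_0,0)=0$ together with $\frac{d}{ds}y(\lambda(s),s)|_{s=0}=\partial_\lambda y(\lambda_0,0)\,\lambda'(0)+\partial_s y(\lambda_0,0)=0$. This identity uses $\lambda'(0)=0$, which you establish in item (2).
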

	
	\section{Statement of Fr\'{e}chet-Kolmogorov-Riesz compactness theorem}\label{appe.FKR}
	The Fr\'{e}chet--Kolmogorov--Riesz compactness theorem \cite{HOH10} characterizes precompact sets in $L^p$ with $p \geq 1$. It is based on the Ascoli--Arzel\`a theorem and the density of continuous functions in $L^p$ spaces. It is also a crucial step in proving compact Sobolev embeddings.
	\begin{theorem}\label{thm.2}
		Assume $1 \leq p < \infty$ and $n \in \mathbb{N}$. Then a family $\mathcal{F}$ is precompact in $L^p\ebra{\mathbb{R}^n}$ if and only if the following conditions hold:
		\begin{enumerate}
			\item (Uniform boundedness) $\mathcal{F}$ is bounded in $L^p\ebra{\mathbb{R}^n}$
			\begin{equation*}\label{eq.21}
				\sup_{f \in \mathcal{F}} \norm{f}_{L^p} < \infty.
			\end{equation*}
			\item (Uniform integrability) For all $\varepsilon > 0$, there is a compact subset $K_\varepsilon \subset \mathbb{R}^n$ such that $\norm{f}_{L^p\ebra{\mathbb{R}^n \setminus K_\varepsilon}} < \varepsilon$ for all $f \in \mathcal{F}$. Or in the language of limit
			\begin{equation*}\label{eq.23}
				\lim\limits_{R \to \infty} \sup_{f \in \mathcal{F}} \norm{f}_{L^p\ebra{\mathbb{R}^n \setminus B_R\ebra{0}}} = 0.
			\end{equation*}
			\item (Uniform small oscillations) For all $\varepsilon > 0$, there is a $\delta = \delta\ebra{\varepsilon}$ such that $\norm{f\ebra{\cdot - h} - f}_p < \varepsilon$ for all $f \in \mathcal{F},\ h \in \mathbb{R}^n, \ |h| < \delta$. Or in the language of limit
			\begin{equation*}\label{eq.22}
				\lim\limits_{\delta \to 0} \sup_{|h| < \delta} \sup_{f \in \mathcal{F}} \norm{\tau_h f - f}_{L^p} = 0.
			\end{equation*}
		\end{enumerate}
	\end{theorem}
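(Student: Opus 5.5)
The plan is to prove the two directions separately. Necessity reduces to finitely many functions via total boundedness. Sufficiency goes through mollification and the Ascoli--Arzel\`a theorem on a large ball. Throughout, $1\le p<\infty$, $p'$ is the conjugate exponent, and $\varphi_\delta(x)=\delta^{-n}\varphi(x/\delta)$ is a standard mollifier with $\varphi\ge 0$, $\operatorname{supp}\varphi\subset B_1(0)$ and $\int\varphi=1$.

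\emph{Necessity.} Let $\mathcal{F}$ be precompact, hence totally bounded, and fix $\varepsilon>0$.
\begin{enumerate}
\item Choose a finite $\varepsilon$-net $\{f_1,\dots,f_N\}\subset L^p(\mathbb{R}^n)$ for $\mathcal{F}$. Boundedness is immediate, since $\|f\|_p\le \max_i\|f_i\|_p+\varepsilon$ for every $f\in\mathcal{F}$.
\item Since $p<\infty$, each $f_i$ has small tails: $\|f_i\|_{L^p(\mathbb{R}^n\setminus B_R)}\to0$ as $R\to\infty$. The triangle inequality then gives $\sup_{f\in\mathcal{F}}\|f\|_{L^p(\mathbb{R}^n\setminus B_R)}\le 2\varepsilon$ for $R$ large.
\item Approximate each $f_i$ in $L^p$ by some $g_i\in C_c(\mathbb{R}^n)$. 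Uniform continuity of $g_i$ gives $\|\tau_h f_i-f_i\|_p\to0$ as $h\to0$. For $f\in\mathcal{F}$ within $\varepsilon$ of $f_i$, translation invariance of the norm gives
\[
\|\tau_hf-f\|_p\le 2\|f-f_i\|_p+\|\tau_hf_i-f_i\|_p\le 3\varepsilon
\]
once $|h|$ is small, uniformly in $f$.
\end{enumerate}

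\emph{Sufficiency.} Assume conditions (1)--(3) and fix $\varepsilon>0$; the goal is a finite $C\varepsilon$-net for $\mathcal{F}$.
\begin{enumerate}
\item Minkowski's integral inequality gives
\[
\|f*\varphi_\delta-f\|_p\le\sup_{|h|\le\delta}\|\tau_hf-f\|_p ,
\]
so condition (3) yields $\delta$ with $\|f*\varphi_\delta-f\|_p<\varepsilon$ for all $f\in\mathcal{F}$.
\item Condition (2) yields $R$ with $\|f\|_{L^p(\mathbb{R}^n\setminus B_R)}<\varepsilon$ for all $f\in\mathcal{F}$.
\item Hölder's inequality and condition (1) show that the mollified family $\{f*\varphi_\delta: f\in\mathcal{F}\}$ is uniformly bounded, since $|f*\varphi_\delta(x)|\le\|f\|_p\|\varphi_\delta\|_{p'}$.
\item The same estimate gives equicontinuity, since $|f*\varphi_\delta(x+h)-f*\varphi_\delta(x)|\le\|\tau_{-h}f-f\|_p\|\varphi_\delta\|_{p'}$, which is small uniformly in $f$ by condition (3).
\item By Ascoli--Arzel\`a, the restrictions of $\{f*\varphi_\delta\}$ to the compact ball $\overline{B_R}$ are precompact in $C(\overline{B_R})$. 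Since $\|g\|_{L^p(B_R)}\le|B_R|^{1/p}\|g\|_\infty$, they are also precompact in $L^p(B_R)$. This gives finitely many $g_1,\dots,g_N$ (extended by zero off $B_R$) such that every $(f*\varphi_\delta)|_{B_R}$ lies within $\varepsilon$ of some $g_j$ in $L^p(B_R)$.
\item For $f\in\mathcal{F}$ and the corresponding $g_j$, split the norm over $B_R$ and its complement:
\[
\|f-g_j\|_p\le\|f\|_{L^p(\mathbb{R}^n\setminus B_R)}+\|f-f*\varphi_\delta\|_{L^p(B_R)}+\|f*\varphi_\delta-g_j\|_{L^p(B_R)}<3\varepsilon .
\]
Thus $\mathcal{F}$ is totally bounded, hence precompact, because $L^p(\mathbb{R}^n)$ is complete.
\end{enumerate}

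The main point of care is the sufficiency step: all constants ($\delta$, $R$, the net) must be chosen in the order above, so that mollification is controlled uniformly before Ascoli--Arzel\`a is applied. The case $p=1$, needed in Lemma \ref{thm.1}, requires no change, since then $p'=\infty$ and $\|\varphi_\delta\|_\infty<\infty$.
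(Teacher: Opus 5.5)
Your proof is correct and follows the standard route. Necessity uses a finite $\varepsilon$-net together with density of $C_c(\mathbb{R}^n)$ in $L^p$. Sufficiency uses uniform mollification, Ascoli--Arzel\`a on a large ball and the tail condition, with $\delta$, $R$ and the net chosen in the right order. The paper gives no proof of its own (it defers to Hanche-Olsen and Holden), but it names exactly these ingredients, Ascoli--Arzel\`a and density of continuous functions, so your argument is the one it has in mind.
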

	
	\begin{remark}\label{rem.4}
		The classical $p$-order ($p \geq 1, p \in \mathbb{N}$) uniform integrability reads
		\begin{equation*}\label{eq.24}
			\sup_{f \in \mathcal{F}} \int_{\mathbb{R}^n \setminus B_R\ebra{0}} |x|^p f\ebra{x} \dif x = \sup_{X_\#\mathbb{P} = f \mathcal{L}^n, f \in \mathcal{F}} \int_{|X| > R} |X|^p \dif \mathbb{P} < \infty.
		\end{equation*}
		By analogy, we call (ii) ``zero-order" uniform integrability. A similar property of $p$-uniform integrability also holds for (ii). 
	\end{remark}
	
	\section{A detailed proof for Lemma \ref{lemma.2} }\label{appendix.D}
	\textit{An introduction to Hermite orthogonal polynomials. } In the one-dimensional case, the classical Hermite polynomial system $\{h_k\}_{k \in \mathbb{N}}$ and its normalized version $\{\tilde{h}_k\}_{k \in\mathbb{N}}$ are defined by
	\begin{equation*}\label{eq.65}
		h_k := \ebra{-1}^k \cdot \exp\ebra{x^2} \cdot \dif^k \ebra{\exp\ebra{-x^2}} / \dif x^k, \quad \tilde{h}_k := \ebra{2^k \cdot k!}^{-1/2} \cdot \pi^{-1/4}\cdot h_k.
	\end{equation*}
	The polynomials can be equivalently defined by the Taylor expansion
	\begin{equation*}\label{eq.66}
		\exp\ebra{2xt - t^2} = \sum_{k = 0}^{\infty} h_k\ebra{x} \cdot \frac{t^k}{k!}.
	\end{equation*}
	The system $\{\tilde{h}_n\}_{n \in \mathbb{N}}$ forms a normalized complete orthogonal system on $L^2\ebra{\mathbb{R}, \exp\ebra{-x^2}}$; see Proposition 1.1 in \cite{AS12}. In the higher-dimensional case, the basis is generated by tensor products of one-dimensional Hermite bases:
	\begin{equation*}\label{eq.67}
		H_\alpha \ebra{x_1, \dots, x_n} := \ebra{\otimes_{k=1}^n h_{\alpha_k}} \ebra{x_1, \dots, x_n} = \prod_{k=1}^n h_{\alpha_k}\ebra{x_k}, \quad \tilde{H}_\alpha  := \otimes_{k=1}^n \tilde{h}_{\alpha_k},
	\end{equation*}
	where $\alpha = \ebra{\alpha_1, \dots, \alpha_n} \in \mathbb{N}^n$ is a multi-index. Based on the Hermite polynomials, we can construct the bases $\{\psi_i\}$ and $\{\Psi_\alpha\}$ for the weighted space; see details in (\ref{eq.70}) and (\ref{eq.71}).
	
	\begin{proof}[Detailed proof of Lemma \ref{lemma.2}] \label{proof.9}
		We calculate the $n=1$ case in detail as a starting point and then sketch the higher-dimensional case analogously.
		
		\underline{When $n = 1$,} $K$ maps the basis $\psi_0, \psi_1, \psi_2$  to
		\begin{equation*}\label{eq.75}
			\begin{aligned}
				& K \psi_0  = \rho_0 \cdot \pi^{-1/4} \cdot \ebra{x^2/2 - x \cdot m_1\ebra{\rho_0} + m_1\ebra{\rho_0}^2 - m_2\ebra{\rho_0}/2}, \\
				& K \psi_1  = \rho_0 \cdot \pi^{-1/4} \cdot \ebra{\sqrt{2}/2m_1\ebra{\rho_0} \cdot x^2 - \sqrt{2}m_2\ebra{\rho_0} \cdot x + \sqrt{2}/2m_1\ebra{\rho_0}\cdot m_2\ebra{\rho_0}}, \\
				& K \psi_2 = \rho_0 \cdot \pi^{-1/4} \cdot \ebra{(\sqrt{2}/2 m_2\ebra{\rho_0}-\sqrt{2}/4)\cdot x^2 + (- \sqrt{2}m_3\ebra{\rho_0} + \sqrt{2}m_1(\rho_0)/2)\cdot  x \\
					& \qquad\quad + \sqrt{2}m_1\ebra{\rho_0} m_3\ebra{\rho_0} - \sqrt{2}/2\cdot m_1^2(\rho_0)- \sqrt{2}/2m_2\ebra{\rho_0}^2} + \sqrt{2}/4\cdot m_2(\rho_0).
			\end{aligned}
		\end{equation*}
		Expanding $K\psi_0, K\psi_1, K\psi_2$ in this basis, we obtain the square matrix representation of $K$ restricted to $\text{span}\{\psi_0, \psi_1, \psi_2\}$:
		\begin{equation*}\label{eq.76}
			\begin{pmatrix}
				-m_2/2 + m_1^2 + 1/4 & \sqrt{2}/4 \cdot m_1 \ebra{1 + 2m_2} & -\sqrt{2}/2\ebra{m_2 - 1/2}^2 + \sqrt{2} m_1 \ebra{m_3 - m_1/2} \\
				-\sqrt{2}m_1/2 & -m_2 & -m_3 + m_1/2 \\
				\sqrt{2}/4 & m_1/2 & m_2/2 - 1/4
			\end{pmatrix} \ebra{\rho_0}.
		\end{equation*}
		A simple linear-algebra calculation gives the characteristic polynomial
		\begin{equation*}\label{eq.77}
			\ebra{\theta + m_2\ebra{\rho_0} - m_1\ebra{\rho_0}^2} \cdot \theta^2,
		\end{equation*}
		with eigenspace $\text{span}\{-\sqrt{2}m_1\ebra{\rho_0} \cdot \psi_0 + \psi_1\}$ corresponding to the spectral value $-m_2\ebra{\rho_0} + m_1\ebra{\rho_0}^2$.
		
		\underline{The general $n \geq 1$ case.} We consider the restriction of $K$ to $$\text{span}\{\Psi_\alpha: |\alpha| \leq 2\} = \text{span}\{\Psi_0, \Psi_{e_1}, \dots, \Psi_{e_n}, \Psi_{2e_1}, \dots, \Psi_{2e_n}, \Psi_{e_1 + e_2}, \dots, \Psi_{e_{n-1} + e_n}\},$$ which is a $1 + n + n + C_n^2 = n^2/2 + 3n/2 + 1$-dimensional space. The operator $K$ maps the basis as follows (from here onward, we omit the notation ``$\ebra{\rho_0}$" in $m_\alpha\ebra{\rho_0}$ when there is no confusion):
		\begin{equation*}\label{eq.78}
			\begin{aligned}
				& K \Psi_0 = \rho_0 \cdot \pi^{-n/4} \cdot \ebra{|x|^2/2 - \ebra{m_{e_1} x_1 + \cdots + m_{e_n} x_n} - m_2\ebra{\rho_0}/2 + m_{e_1}^2 + \cdots + m_{e_n}^2}, \\
				& K\Psi_{e_i} = \rho_0 \cdot \pi^{-n/4} \cdot\ebra{\sqrt{2}/2 m_{e_i} \cdot |x|^2 - \sqrt{2}\ebra{m_{e_i + e_1} x_1 + \cdots + m_{e_i + e_n}x_n} \\
					& \qquad \qquad \qquad \qquad \quad - \sqrt{2}/2 m_{e_i}\cdot m_2 + \sqrt{2} \ebra{m_{e_i + e_1}m_{e_1} + \cdots + m_{e_i + e_n}m_{e_n}}}, \\
				& K \Psi_{2e_i} = \rho_0 \cdot \pi^{-n/4} \cdot \ebra{\ebra{\sqrt{2}m_{2 e_i}\ebra{\rho_0}/2 - \sqrt{2}/4}\cdot \ebra{|x|^2 - m_2} + \ebra{\ebra{\sqrt{2} m_{2e_i + e_1} - \sqrt{2}m_{e_1}/2} \\
						& \qquad \qquad \qquad \qquad \quad \ \cdot \ebra{m_{e_1} - x_1} + \cdots + \ebra{\sqrt{2} m_{2e_i + e_n} - \sqrt{2}m_{e_n}/2} \cdot \ebra{m_{e_n} - x_n}}}, \\
				& K \Psi_{e_i + e_j} = \rho_0 \cdot \pi^{-n/4} \cdot \ebra{m_{e_i + e_j} \cdot |x|^2 - 2\ebra{m_{e_i + e_j + e_1} x_1 + \cdots + m_{e_i + e_j + e_n} x_n} \\
					& \qquad \qquad \qquad \qquad \qquad- m_{e_i + e_j} \cdot m_2 + 2\ebra{m_{e_i + e_j + e_1}m_{e_1} + \cdots + m_{e_i + e_j + e_n}m_{e_n}}}, \ (i \neq j).
			\end{aligned}
		\end{equation*}
		
		Expanding these expressions again in the basis
		\begin{equation*}\label{eq.80}
			\begin{aligned}
				& K\Psi_0 = \ebra{n/4 - m_2/2 + m_{e_1}^2 + \cdots + m_{e_n}^2} \cdot \Psi_0 - \sqrt{2}/2\ebra{m_{e_1} \cdot \Psi_{e_1} + \cdots + m_{e_n} \cdot \Psi_{e_n}}  \\
				& \qquad \quad \ \ + \sqrt{2} / 4 \ebra{\Psi_{2e_1} + \cdots + \Psi_{2e_n}}, \\
				& K\Psi_{e_i} = \sqrt{2} \ebra{\ebra{n/4 - m_2/2} m_{e_i} + \ebra{m_{e_i + e_1}m_{e_1} + \cdots + m_{e_i + e_n}m_{e_n}}} \cdot \Psi_0 \\
				& \qquad \quad \ \ - \ebra{m_{e_i + e_1} \cdot \Psi_{e_1} + \cdots + m_{e_i + e_n} \cdot \Psi_{e_n}} + m_{e_i}/2 \cdot \ebra{\Psi_{2e_1} + \cdots + \Psi_{2e_n}}, \\
				& K \Psi_{2e_i} = \ebra{\sqrt{2}\ebra{m_{2e_i} - 1/2} \ebra{n/4 - m_2/2} + \sqrt{2}\ebra{ \ebra{m_{2e_i + e_1} - m_{e_1}/2} m_{e_1} + \cdots \\
						& \qquad \quad \ \  + \ebra{m_{2e_i + e_n} - m_{e_n}/2} m_{e_n}}} \cdot \Psi_0 + \ebra{m_{2e_i}/2 - 1/4}\cdot \ebra{\Psi_{2e_1} + \cdots + \Psi_{2e_n}} \\
				& \qquad \quad \ \ + \ebra{\ebra{-m_{2e_i + e_1} + m_{e_1}/2}\cdot \Psi_{e_1} + \cdots + \ebra{-m_{2e_i + e_n} + m_{e_n}/2} \cdot \Psi_{e_n}}, \\
				& K \Psi_{e_i + e_j} = \ebra{m_{e_i + e_j} \ebra{n/2 - m_2} + 2\ebra{m_{e_i + e_j + e_1}m_{e_1} + \cdots + m_{e_i + e_j + e_n}m_{e_n}}}\cdot \Psi_0 \quad (i \neq j)\\
				& \qquad \quad \ \ - \sqrt{2} \ebra{m_{e_i + e_j + e_1} \cdot \Psi_{e_1} + \cdots + m_{e_i + e_j + e_n} \cdot \Psi_{e_n}} + \sqrt{2}/2 m_{e_i + e_j} \cdot \ebra{\Psi_{2e_1} + \cdots + \Psi_{2e_n}}.
			\end{aligned}
		\end{equation*}
		Thus, we can write the restriction of $K$ to $\text{span}\{\Psi_\alpha: |\alpha| \leq 2\}$ in square-matrix form:
		\begin{equation*}\label{eq.81}
			\left(\begin{array}{c:ccc:ccc:ccc}
				\textbf{M}_{1,1} &  & \ebra{{\textbf{M}_{1,2}}}_{1\times n} &  &  &\ebra{\textbf{M}_{1,3}}_{1 \times n} &  & & \ebra{\textbf{M}_{1,4}}_{1 \times C_n^2} & \\ \hdashline
				&  &  &  &  &  &  & &\\
				\ebra{\textbf{M}_{2,1}}_{n \times 1} &  & \ebra{\textbf{M}_{2,2}}_{n \times n} &  &  & \ebra{\textbf{M}_{2,3}}_{n \times n} &  & & \ebra{\textbf{M}_{2,4}}_{n \times C_n^2} & \\
				&  &  &  &  &  &  & & &\\ \hdashline
				&  &  &  &  &  &  & & &\\
				\ebra{\textbf{M}_{3,1}}_{n \times 1} &  & \ebra{\textbf{M}_{3,2}}_{n \times n} &  &  & \ebra{\textbf{M}_{3,3}}_{n \times n} &  & & \ebra{\textbf{M}_{3,4}}_{n \times C_n^2} & \\
				&  &  &  &  &  &  & & & \\ \hdashline
				\textbf{O}_{C_n^2 \times 1}  &  & \textbf{O}_{C_n^2 \times n} &  &  & \textbf{O}_{C_n^2 \times n} &  & & \textbf{O}_{C_n^2 \times C_n^2} &
			\end{array}\right),
		\end{equation*}
		where $\textbf{O}$ is the zero matrix with specified size; $C_n^2 := n\ebra{n-1}/2$; $m_2 = m_{2e_1} + \cdots + m_{2e_n}$;
		\allowdisplaybreaks
		\begin{equation*}
			\begin{aligned}
				& \textbf{M}_{1,1} = \frac{n}{4} - \frac{1}{2}m_2 + m_{e_1}^2 + \cdots + m_{e_n}^2, \quad \ebra{\textbf{M}_{1,2}}_{1 \times n} = \left(-\frac{\sqrt{2}}{2} m_{e_i} \ebra{m_2 - \frac{n}{2}} + \sqrt{2} \cdot \sum_{k=1}^n m_{e_i + e_k} m_{e_k}\right), \\
				& \ebra{\textbf{M}_{1,3}}_{1 \times n} = \left(-\frac{\sqrt{2}}{2}\ebra{m_{2e_i} - \frac{1}{2}}\ebra{m_2 - \frac{n}{2}} + \sqrt{2} \cdot \sum_{k=1}^n \ebra{m_{2e_i + e_k} - \frac{1}{2}m_{e_k}} m_{e_k}\right)_{i \in \{1, \dots, n\}}, \\
				& \ebra{\textbf{M}_{1,4}}_{1 \times C_n^2} = \left(-m_{e_i + e_j}\ebra{m_2 - \frac{n}{2}} + 2\sum_{k = 1}^n m_{e_i + e_j + e_k} m_{e_k}\right)_{i, j\in \{1, \dots, n\}, i \neq j}, \\
				& \ebra{\textbf{M}_{2,1}}_{n \times 1} = \left(-\frac{\sqrt{2}}{2}m_{e_1}, \ \dots,\ -\frac{\sqrt{2}}{2}m_{e_n}\right), \qquad\quad\ \ebra{\textbf{M}_{3,1}}_{n \times 1} = \left(\frac{\sqrt{2}}{4}, \ \dots ,\ \frac{\sqrt{2}}{4}\right)_{n \times 1}, \\
				& \ebra{\textbf{M}_{2,2}}_{n \times n} = \left(\begin{array}{ccc}
					-m_{e_1 + e_1} &  \cdots & -m_{e_n + e_1}\\
					\vdots  & \vdots &  \vdots \\
					-m_{e_1 + e_n} &  \cdots & -m_{e_n + e_n}
				\end{array}\right)_{n \times n}, \ \ \
				\ebra{\textbf{M}_{3,2}}_{n \times n} = \left(\begin{array}{ccc}
					\frac{1}{2}m_{e_1} &  \cdots & \frac{1}{2}m_{e_n}\\
					\vdots  & \vdots &  \vdots \\
					\frac{1}{2}m_{e_1} &  \cdots & \frac{1}{2}m_{e_n}
				\end{array}\right)_{n \times n}, \\
			\end{aligned}
		\end{equation*}
		\begin{equation*}
			\begin{aligned}
				& \ebra{\textbf{M}_{2,3}}_{n \times n} = \left(\begin{array}{ccc}
					-m_{2e_1 + e_1} + \frac{1}{2}m_{e_1} &  \cdots & -m_{2e_n + e_1} + \frac{1}{2}m_{e_1}\\
					\vdots  & \vdots &  \vdots \\
					-m_{2e_1 + e_n} + \frac{1}{2} m_{e_n} &  \cdots & -m_{2e_n + e_n} + \frac{1}{2}m_{e_n}
				\end{array}\right)_{n \times n}, \qquad\qquad\qquad\qquad\qquad\qquad\\
				& \ebra{\textbf{M}_{3,3}}_{n \times n} =
				\left(\begin{array}{ccc}
					\frac{1}{2} m_{2e_1} - \frac{1}{4}  &  \cdots & \frac{1}{2} m_{2e_n} - \frac{1}{4} \\
					\vdots  & \vdots &  \vdots \\
					\frac{1}{2} m_{2e_1} - \frac{1}{4}  &  \cdots & \frac{1}{2} m_{2e_n} - \frac{1}{4}
				\end{array}\right)_{n \times n}, \\
			\end{aligned}
		\end{equation*}
		\begin{equation*}\label{eq.82}
			\begin{aligned}
				& \ebra{\textbf{M}_{2,4}}_{n \times C_n^2} = \left(\begin{array}{ccc}
					-\sqrt{2} m_{e_1 + e_2 + e_1} &  \cdots & -\sqrt{2}m_{e_{n-1} + e_n + e_1}\\
					\vdots  & \vdots &  \vdots \\
					-\sqrt{2}m_{e_1 + e_2 + e_n} &  \cdots & -\sqrt{2} m_{e_{n-1} + e_n + e_n}
				\end{array}\right)_{n \times C_n^2}, \qquad\qquad\qquad\qquad\qquad\qquad\ \ \\
				& \ebra{\textbf{M}_{3,4}}_{n \times C_n^2} =
				\left(\begin{array}{ccc}
					\frac{\sqrt{2}}{2} m_{e_1 + e_2}  &  \cdots & \frac{\sqrt{2}}{2} m_{e_{n-1} + e_n} \\
					\vdots  & \vdots &  \vdots \\
					\frac{\sqrt{2}}{2} m_{e_1 + e_2}  &  \cdots & \frac{\sqrt{2}}{2} m_{e_{n-1} + e_n}
				\end{array}\right)_{n \times C_n^2}.
			\end{aligned}
		\end{equation*}
		
		\underline{Computation of the characteristic polynomial:} Many terms can be eliminated. After suitable row and column transformations, the characteristic polynomial
		\begin{equation*}
			\textbf{det}\left(\theta \textbf{id}_{n \times n} - K|_{\text{span}\{\Psi_\alpha: |\alpha| \leq 2\}}\right)
		\end{equation*} becomes
		\begin{equation*}\label{eq.82.1}
			\theta^{n^2/2 - n/2} \cdot \textbf{det}\left(\begin{array}{c:ccc:ccc}
				\theta &  & \textbf{O}_{1\times n} &  &  & \textbf{O}_{1 \times n} &  \\ \hdashline
				&  &  &  &  & \\
				-\ebra{\textbf{M}_{2,1}}_{n \times 1} &  & \theta \textbf{id}_{n \times n} + \text{Cov}\ebra{\rho_0} &  &  & \widetilde{\ebra{\textbf{M}_{2,3}}}_{n \times n} &  \\
				&  &  &  &  &  & \\ \hdashline
				&  &  &  &  &  & \\
				-\ebra{\textbf{M}_{3,1}}_{n \times 1} &  & \textbf{O}_{n \times n} &  &  & \theta \textbf{id}_{n \times n} &  \\
				&  &  &  &  &  &  \\
			\end{array}\right),
		\end{equation*}
		where
		\begin{equation*}
			\widetilde{\ebra{\textbf{M}_{2,3}}}_{n \times n} = \left(\begin{array}{ccc}
				m_{2e_1 + e_1} - m_{2e_1} m_{e_1} &  \cdots & m_{2e_n + e_1} - m_{2e_n} m_{e_1}\\
				\vdots  & \vdots &  \vdots \\
				m_{2e_1 + e_n} - m_{2e_1} m_{e_n} &  \cdots & m_{2e_n + e_n} - m_{2e_n} m_{e_n}
			\end{array}\right)_{n \times n}.
		\end{equation*}
		(Indeed, this reduction is obtained by adding $\sqrt{2}m_{e_i}$ times row $\ebra{i+1}$ and $\sqrt{2}\ebra{m_{2e_i} - 1/2}$ times row $\ebra{i+n+1}$ to row $1$ ($i \in \{1, \dots, n\}$), and then adding $-\sqrt{2}m_{e_i}$ times column $1$ to column $\ebra{i+1}$ ($i \in \{1, \dots, n\}$), and $\sqrt{2}\ebra{1/2 - m_{e_i}}$ times column $1$ to column $\ebra{i+n+1}$.)
		
		Using the block determinant formula, we obtain the characteristic polynomial in the form
		\begin{equation*}\label{eq.86}
			\theta^{n^2/2 + n/2 + 1} \cdot \textbf{det}\ebra{\theta \textbf{id}_{n \times n} + \text{Cov}\ebra{\rho_0}},
		\end{equation*}
		which implies that the non-zero eigenvalues are the negatives of those of $\mathrm{Cov}(\rho_0)$ defined above.
		
		\underline{Computation of the eigenspaces:} We solve the linear equation for the eigenvectors $\tilde{v}$:
		\begin{equation*}
			\ebra{\theta \textbf{id} - K|_{\text{span}\{\Psi_\alpha: |\alpha| \leq 2\}}} \tilde{v} = 0. \qquad (\theta \neq 0)
		\end{equation*}
		This can be done by classical Gaussian elimination. After several row transformations, the eigen-matrix becomes (we omit the details)
		\begin{equation}\label{eq.86.1}
			\left(\begin{array}{c:ccc:ccc:ccc}
				0 &  & \textbf{O}_{1\times n} &  &  & 2\sqrt{2}\theta^2/n \cdot \textbf{id}_{1 \times n} &  & & \cdots & \\ \hdashline
				&  &  &  &  &  &  & &\\
				\textbf{O}_{n \times 1} &  & \theta \textbf{id}_{n \times n} + \text{Cov}\ebra{\rho_0} &  &  & 2\theta \cdot \text{diag}\{\ebra{m_{e_i}}_{i \in \{1, \dots, n\}}\} + \widetilde{\ebra{\textbf{M}_{2,3}}}_{n \times n} &  & & \cdots & \\
				&  &  &  &  &  &  & & &\\ \hdashline
				&  &  &  &  &  &  & & &\\
				\widetilde{\ebra{\textbf{M}_{3,1}}}_{n \times 1} &  & \widetilde{\ebra{\textbf{M}_{3,2}}}_{n \times n} &  &  & \widetilde{\ebra{\textbf{M}_{3,3}}}_{n \times n} &  & & \cdots & \\
				&  &  &  &  &  &  & & & \\ \hdashline
				\textbf{O}_{C_n^2 \times 1}  &  & \textbf{O}_{C_n^2 \times n} &  &  & \textbf{O}_{C_n^2 \times n} &  & & \textbf{O}_{C_n^2 \times C_n^2} &
			\end{array}\right),
		\end{equation}
		where
		\begin{equation*}\label{eq.86.2}
			\left(\begin{array}{c:c:c}
				\widetilde{\ebra{\textbf{M}_{3,1}}}_{n \times 1} &
				\widetilde{\ebra{\textbf{M}_{3,2}}}_{n \times n} &
				\widetilde{\ebra{\textbf{M}_{3,3}}}_{n \times n}
			\end{array}\right)
		\end{equation*}
		takes the form
		\begin{equation*}\label{eq.86.3}
			\left(\begin{array}{c:ccc:ccccc}
				0 & 0 & \cdots & 0 & \theta & \cdots & 0 & -\theta \\
				\vdots & \vdots & \vdots & \vdots & \vdots & \ddots & \vdots &  \vdots \\
				0 & 0 & \cdots & 0 & 0 & \cdots & \theta & -\theta \\
				-\frac{\sqrt{2}}{4} & -\frac{1}{2} m_{e_1} & \cdots & -\frac{1}{2} m_{e_n} & -\frac{1}{2} m_{2 e_1} + \frac{1}{4} & \cdots & -\frac{1}{2} m_{2 e_{n-1}} + \frac{1}{4} & \theta - \frac{1}{2}m_{2e_n} + \frac{1}{4}
			\end{array}\right)_{n \times \ebra{2n+1}}.
		\end{equation*}
		It can be easily seen that the last $C_n^2$-th elements in the eigenvector are all zeros. Thus we do not care about the last $C_n^2$ columns. Combining the first and $\ebra{n+2}$ -- $\ebra{2n}$-th lines, we obtain that the $\ebra{n+2}$ -- $\ebra{2n+1}$-th elements of the eigenvector are also zeros. Considering the $2$ -- $\ebra{n+1}$-th and $\ebra{2n+1}$-th line in matrix (\ref{eq.86.1}), since the $i$-th eigenvector of covariance matrix corresponding to the eigenvalue $\theta_i$ satisfies
		\begin{equation*}\label{eq.88}
			\ebra{-\theta_i \cdot id_{n \times n} + \text{Cov}\ebra{\rho_0}} \tilde{v}_i = 0,
		\end{equation*}
		and that $-\theta_i$ belongs to the spectrum of $K$, we obtain the corresponding eigenvectors of $K$:
		\begin{equation*}\label{eq.89}
			\tilde{v}_i = \left(-\sqrt{2}m_{e_1}\ebra{v_i}_1 - \cdots - \sqrt{2}m_{e_n} \ebra{v_i}_n,\ \ebra{v_i}_1,\ \dots,\ \ebra{v_i}_n, \ 0,\  \cdots\right), \quad i \in \{1, \dots, n\},
		\end{equation*}
		and the eigenspace is generated by the corresponding eigenspace of the covariance matrix.
		
		The proof is complete.
	\end{proof}
	
	\section{The condition for Ellis-Monroe-Newman's GHS inequality to be strict}\label{appe.GHS}
	In this section, we will recover the proof of Ellis et al. \cite{EMN76} in $\mathbb{R}$ and $N=1$ (the mean-field assumption enables us to just consider one particle) case to find when the inequality becomes strict. The main result is: if $V^{\prime\prime\prime} \not\equiv 0$ on $[0, \infty)$, then it is strict for $m \neq 0$. Let $V$ be an even, coercive potential with $V^{\prime \prime \prime} \geq 0$ on $[0, \infty)$. Our goal is to prove the third-order central moment $\eangle{\ebra{x - \eangle{x}}^3}_\rho\ebra{\lambda, m} \leq 0$ as long as $m \geq 0$, where $\rho\ebra{\lambda, m} = \exp\ebra{\lambda \cdot \ebra{V - mx}}/Z\ebra{\lambda, m}$ with $Z$ being the normalizing constant.
	
	The proof of Ellis et al. uses a four-copy coupling technique to transform analytic complexity into combinatorial complexity. Define two orthogonal transformation matrices
	\begin{equation*}\label{eq.F.1}
		A = \frac{1}{2}
		\begin{pmatrix}
			1 & 1 & 1 & 1\\
			1 & -1 & 1 & -1\\
			1 & 1 & -1 & -1\\
			-1 & 1 & 1 & -1
		\end{pmatrix},
		\qquad B = \frac{1}{2}
		\begin{pmatrix}
			1 & 1 & 1 & 1\\
			-1 & 1 & -1 & 1\\
			-1 & -1 & 1 & 1\\
			1 & -1 & -1 & 1
		\end{pmatrix}.
	\end{equation*}
	$B$ can be obtained by reversing the signs of rows $2$--$4$ of $A$. Let $X^{(1)}, X^{(2)}, X^{(3)}, X^{(4)}$ be four \textit{i.i.d.} copies of $X$, with $\text{law}\ebra{X} \sim \exp(\lambda\cdot V) / Z\ebra{\lambda, 0} = \rho\ebra{\lambda, 0}$.
	The key step is
	\begin{equation}\label{eq.F.3}
		\begin{aligned}
			& \partial_{m, m, m}^3 \ln Z\ebra{\lambda, m} \cdot \ebra{-\lambda^{-3}} = \eangle{\ebra{x - \eangle{x}}^3}_\rho = \eangle{x^3}_\rho - 3\eangle{x^2}_\rho\eangle{x}_\rho + 2\eangle{x}^3_\rho\\
			& = \ebra{\frac{Z\ebra{\lambda, 0}}{Z\ebra{\lambda, m}}}^4 \cdot \int_{\mathbb{R}^4} \ebra{\frac{\partial^3}{\partial m_1^3} - 3 \frac{\partial^3}{\partial m_1^2 \partial m_2} + 2 \frac{\partial^3}{\partial m_1 \partial m_2 \partial m_3}} \exp\ebra{\textbf{m}\cdot \textbf{x}} \cdot \rho\ebra{\lambda, 0}^{\otimes 4}\ebra{\textbf{x}} \dif \textbf{x}|_{\textbf{m} = \ebra{m, \dots, m}} \\
			& = \ebra{\frac{Z\ebra{\lambda, 0}}{Z\ebra{\lambda, m}}}^4\cdot \mathbb{E}\ebra{\ebra{\frac{\partial^3}{\partial m_1^3} - 3 \frac{\partial^3}{\partial m_1^2 \partial m_2} + 2 \frac{\partial^3}{\partial m_1 \partial m_2 \partial m_3}} \exp\ebra{\textbf{m}\cdot\textbf{X}}}|_{\textbf{m} = \ebra{m, \dots, m}}.
		\end{aligned}
	\end{equation}
	Here $\textbf{m} = \ebra{m_1, \dots, m_4}$, $\textbf{X} = \ebra{X^{(1)}, \dots, X^{(4)}}$. Now we will see that the matrix $B$ is indeed diagonalizing the operator appearing above. Let $\textbf{s} = B \textbf{m}$. Then $B \textbf{X}$ writes
	\begin{equation*}\label{eq.F.2}
		\begin{aligned}
			& (B\textbf{X})^{(1)} = \frac{1}{2} \cdot \ebra{X^{(1)} + X^{(2)} + X^{(3)} + X^{(4)}}, \qquad \ebra{B\textbf{X}}^{(2)} = \frac{1}{2} \cdot \ebra{X^{(1)} - X^{(2)} + X^{(3)} - X^{(4)}}, \\
			& \ebra{B\textbf{X}}^{(3)} = \frac{1}{2} \cdot \ebra{X^{(1)} + X^{(2)} - X^{(3)} - X^{(4)}}, \qquad \ebra{B\textbf{X}}^{(4)} = \frac{1}{2}\cdot \ebra{-X^{(1)} + X^{(2)} + X^{(3)} - X^{(4)}}.
		\end{aligned}
	\end{equation*}
	By the orthogonality of $B$
	\begin{equation*}\label{eq.F.4}
		\text{m} \cdot \text{X} = \textbf{m}^T B^T B \textbf{X} = \textbf{s} \cdot B\textbf{X},
	\end{equation*}
	using the fact that for any $i \neq j \neq k$
	\begin{equation*}\label{eq.F.5}
		\begin{aligned}
			& \mathbb{E} \frac{\partial^3}{\partial m_i^3} \exp\ebra{\textbf{m}\cdot \textbf{X}}=\mathbb{E}\frac{\partial^3}{\partial m_1^3}\exp\ebra{\textbf{m}\cdot \textbf{X}}, \\
			& \mathbb{E} \frac{\partial^3}{\partial m_i^2 \partial m_j} \exp\ebra{\textbf{m}\cdot \textbf{X}} = \mathbb{E} \frac{\partial^3}{\partial m_1^2 \partial m_2} \exp\ebra{\textbf{m}\cdot \textbf{X}}, \\
			& \mathbb{E} \frac{\partial^3}{\partial m_i \partial m_j \partial m_k} \exp\ebra{\textbf{m}\cdot \textbf{X}} = \mathbb{E}\frac{\partial^3}{\partial m_1 \partial m_2 \partial m_3} \exp \ebra{\textbf{m}\cdot \textbf{X}},
		\end{aligned}
	\end{equation*}
	one can check that
	\begin{equation*}\label{eq.F.6}
		\begin{aligned}
			\mathbb{E} \ebra{B\textbf{X}}^{(2)} \ebra{B \textbf{X}}^{(3)}& \ebra{B \textbf{X}}^{(4)} \exp\ebra{\textbf{s}\cdot B\textbf{X}}\\
			& = -2 \cdot \mathbb{E} \ebra{\ebra{X^{(1)}}^3 - 3 \ebra{X^{(1)}}^2 X^{(2)} + 2 X^{(1)} X^{(2)} X^{(3)}} \cdot \exp\ebra{\textbf{m} \cdot \textbf{X}}.
		\end{aligned}
	\end{equation*}
	This implies (\ref{eq.F.3}) equals
	\begin{equation*}\label{eq.F.7}
		-2 \cdot \ebra{\frac{Z\ebra{\lambda, 0}}{Z\ebra{\lambda, m}}}^4 \cdot \mathbb{E} \ebra{B\textbf{X}}^{(2)} \ebra{B \textbf{X}}^{(3)}\ebra{B \textbf{X}}^{(4)} \exp\ebra{\textbf{s}\cdot B\textbf{X}}|_{\textbf{s} = \ebra{2m, 0, 0, 0}}.
	\end{equation*}
	By the Taylor expansion, we have
	\begin{equation*}\label{eq.F.8}
		\begin{aligned}
			\exp\ebra{\textbf{s}\cdot B\textbf{X}} & = \sum_{n=0}^{+\infty} \frac{1}{n!}\cdot \ebra{\sum_{i=1}^4 s_i \cdot \ebra{B\textbf{X}}_i}^n\\
			& = \sum_{n=0}^{+\infty} \sum_{|\textbf{m}| = n} \prod_{i=1}^4 \frac{1}{m^{(i)}!}\cdot \ebra{s_i \ebra{B\textbf{X}}_i}^{m^{(i)}},
		\end{aligned}
	\end{equation*}
	where $|\textbf{m}| := m^{(1)} + \cdots + m^{(4)}$. Substituting this into \eqref{eq.F.3} and evaluating at $\ebra{2m, 0, 0, 0}$, we obtain (note that $V$ is at least quadratic by assumption, so the expansion is justified) ($\textbf{m}! := m^{(1)}! \cdots m^{(4)}!$)
	\begin{equation}\label{eq.F.9}
		\begin{aligned}
			& \partial_{m, m, m}^3 \ln Z\ebra{\lambda, m} \cdot \ebra{-\lambda^{-3}} \\
			& = -2\cdot \ebra{\frac{Z\ebra{\lambda, 0}}{Z\ebra{\lambda, m}}}^4 \cdot \sum_{n=0}^{+\infty} \sum_{|\textbf{m}|=n} \frac{1}{\textbf{m}!}\cdot\mathbb{E} \ebra{s_1 \ebra{B\textbf{X}}^{(1)}}^{m^{(1)}} \prod_{i=2}^4 \ebra{s_i\ebra{B\textbf{X}}^{(i)}}^{m^{(i)}}\cdot \ebra{B\textbf{X}}^{(i)}|_{\textbf{s} = \ebra{2m, 0, 0, 0}}\\
			& = -2 \cdot \ebra{\frac{Z\ebra{\lambda, 0}}{Z\ebra{\lambda, m}}}^4 \cdot \sum_{n=0}^{+\infty} \frac{1}{n!}\cdot\mathbb{E} \ebra{2m \ebra{B\textbf{X}}^{(1)}}^{n} \ebra{B\textbf{X}}^{(2)} \ebra{B\textbf{X}}^{(3)} \ebra{B\textbf{X}}^{(4)}\\
			& = -2 \cdot \ebra{\frac{Z\ebra{\lambda, 0}}{Z\ebra{\lambda, m}}}^4 \cdot \sum_{n \text{ odd}} \frac{1}{n!} \cdot \ebra{2m}^n \cdot \mathbb{E} \ebra{\ebra{B\textbf{X}}^{(1)}}^n \ebra{B\textbf{X}}^{(2)}\ebra{B\textbf{X}}^{(3)}\ebra{B\textbf{X}}^{(4)}.
		\end{aligned}
	\end{equation}
	The last equality follows from the symmetry argument
	\begin{equation}\label{eq.F.10}
		\mathbb{E} \prod_{i=1}^4 \ebra{\ebra{B\textbf{X}}^{(i)}}^{m^{(i)}} \neq 0 \text{ if and only if } m^{(i)} \text{ all even or all odd}.
	\end{equation}
	Indeed, since $\text{law } X^{(i)}$ is even and by the definition of $B$, we have the following permutation results
	\begin{equation*}\label{eq.F.11}
		\begin{aligned}
			& \mathbb{E} \prod_{i=1}^4 \ebra{\ebra{B\textbf{Y}}^{(i)}}^{m^{(i)}} = \ebra{-1}^{m^{(1)} + \cdots + m^{(4)}} \cdot \mathbb{E} \prod_{i=1}^4 \ebra{\ebra{B\textbf{X}}^{(i)}}^{m^{(i)}},  &\textbf{Y} = -\textbf{X}, \\
			& \mathbb{E} \prod_{i=1}^4 \ebra{\ebra{B\textbf{Y}}^{(i)}}^{m^{(i)}} = \ebra{-1}^{m^{(2)} + m^{(4)}} \cdot \mathbb{E} \prod_{i=1}^4 \ebra{\ebra{B\textbf{X}}^{(i)}}^{m^{(i)}},  &\textbf{Y} = \ebra{X^{(1)}, X^{(2)}, -X^{(3)}, -X^{(4)}}, \\
			& \mathbb{E} \prod_{i=1}^4 \ebra{\ebra{B\textbf{Y}}^{(i)}}^{m^{(i)}} = \ebra{-1}^{m^{(3)} + m^{(4)}} \cdot \mathbb{E} \prod_{i=1}^4 \ebra{\ebra{B\textbf{X}}^{(i)}}^{m^{(i)}},  &\textbf{Y} = \ebra{X^{(1)}, -X^{(2)}, X^{(3)}, -X^{(4)}}, \\
			& \mathbb{E} \prod_{i=1}^4 \ebra{\ebra{B\textbf{Y}}^{(i)}}^{m^{(i)}} = \ebra{-1}^{m^{(2)} + m^{(3)}} \cdot \mathbb{E} \prod_{i=1}^4 \ebra{\ebra{B\textbf{X}}^{(i)}}^{m^{(i)}},  &\textbf{Y} = \ebra{-X^{(1)}, X^{(2)}, X^{(3)}, -X^{(4)}}.
		\end{aligned}
	\end{equation*}
	Thus \eqref{eq.F.10} follows. It remains to show that the expectation is non-negative when $n$ is odd, or more generally, when all indices in $\textbf{m} = \ebra{m^{(1)}, \cdots, m^{(4)}}$ are odd. By symmetry, we have the following identity, which transforms integration on $\mathbb{R}^4$ into integration on $\mathbb{R}^4_+$:
	\begin{equation*}\label{eq.007}
		\begin{aligned}
			\mathbb{E} \prod_{i=1}^4 \ebra{\ebra{B\textbf{X}}^{(i)}}^{m^{(i)}} & = \int_{\mathbb{R}^4} x_1^{m^{(1)}} x_2^{m^{(2)}} x_3^{m^{(3)}} x_4^{m^{(4)}} \cdot B_\sharp \ebra{\rho\ebra{\lambda, 0}^{\otimes4}}\ebra{\textbf{x}} \dif \textbf{x}, \\
			& = \sum_{\sigma^{(i)} \in \{-1, 1\}} \int_{\mathbb{R}^4_+} \prod_{i=1}^4 \ebra{\sigma^{(i)} x_i}^{m^{(i)}} \cdot B_\sharp\ebra{\rho\ebra{\lambda, 0}^{\otimes 4}}\ebra{\sigma^{(1)}x_1, \cdots, \sigma^{(4)}x_4} \dif \textbf{x}.
		\end{aligned}
	\end{equation*}
	Since $B^{-1} = B^T$, and $\det B = 1$, we have
	\begin{equation*}\label{eq.008}
		\begin{aligned}
			B_\sharp\ebra{\rho\ebra{\lambda, 0}^{\otimes 4}}\ebra{\sigma^{(1)}x_1, \cdots, \sigma^{(4)}x_4} & = \ebra{\text{diag}\{\sigma^{(1)}, \cdots, \sigma^{(4)}\} \circ B}_{\sharp} \ebra{\rho\ebra{\lambda, 0}^{\otimes 4}}\ebra{\textbf{x}}\\
			& = \rho\ebra{\lambda, 0}^{\otimes 4}\ebra{B^T \text{diag}\{\sigma^{(1)}, \cdots, \sigma^{(4)}\} \textbf{x}},
		\end{aligned}
	\end{equation*}
	by the Monge--Amp\`ere equation for the push-forward measures.
	By the evenness of $\rho$, we get
	\begin{equation*}\label{eq.009}
		B_\sharp\ebra{\rho\ebra{\lambda, 0}^{\otimes 4}}\ebra{\sigma^{(1)}x_1, \cdots, \sigma^{(4)}x_4} =
		\begin{cases}
			B_\sharp \ebra{\rho\ebra{\lambda, 0}^{\otimes 4}}\ebra{\textbf{x}}, \qquad \prod_{i=1}^4 \sigma^{(i)} = 1, \\
			A_\sharp \ebra{\rho\ebra{\lambda, 0}^{\otimes 4}}\ebra{\textbf{x}}, \qquad \prod_{i=1}^4 \sigma^{(i)} = -1. \\
		\end{cases}
	\end{equation*}
	Therefore, we obtain
	\begin{equation*}\label{eq.010}
		\mathbb{E} \prod_{i=1}^4 \ebra{\ebra{B\textbf{X}}^{(i)}}^{m^{(i)}} = 8 \int_{\mathbb{R}^4_+} \prod_{i=1}^4 x_i^{m^{(i)}} \cdot \ebra{B_\sharp \ebra{\rho\ebra{\lambda, 0}^{\otimes 4}} - A_\sharp\ebra{\rho\ebra{\lambda, 0}^{\otimes 4}}} \ebra{\textbf{x}} \dif \textbf{x}.
	\end{equation*}
	By the expressions of $A$, $B$, and $\rho\ebra{\lambda, 0}$, it remains to prove the following property of $V$:
	\begin{equation}\label{eq.011}
		\begin{aligned}
			& V\ebra{x_1 - x_2 - x_3 + x_4} + V\ebra{x_1 + x_2 - x_3 - x_4} + V\ebra{x_1 - x_2 + x_3 - x_4} \\
			& + V\ebra{x_1 + x_2 + x_3 + x_4}- V\ebra{x_1 + x_2 + x_3 - x_4} - V\ebra{x_1 - x_2 + x_3 + x_4} \\
			& - V\ebra{x_1 + x_2 - x_3 + x_4} - V\ebra{x_1 - x_2 - x_3 - x_4} \geq 0 \qquad (x_i \geq 0).
		\end{aligned}
	\end{equation}
	This is exactly an eight-point difference scheme for the third-order derivative. Writing
	\begin{equation*}\label{eq.012}
		\begin{aligned}
			\text{(\ref{eq.011})} & = \int_{-1}^{1} \ebra{V^\prime\ebra{x_1 - x_2 - x_3 + r x_4} + V^\prime \ebra{x_1 + x_2 + x_3 + rx_4} \\
				&\qquad\qquad- V^\prime \ebra{x_1 - x_2 + x_3 + r x_4} -V^\prime\ebra{x_1 + x_2 - x_3 + r x_4}}\cdot x_4 \dif r,
		\end{aligned}
	\end{equation*}
	we find that \eqref{eq.011} is equivalent, for $x_i \geq 0$, to:
	\begin{equation}\label{eq.013}
		\begin{aligned}
			V^\prime\ebra{x_1 - x_2 - x_3} + V^\prime \ebra{x_1 + x_2 + x_3} - V^\prime \ebra{x_1 - x_2 + x_3} -V^\prime\ebra{x_1 + x_2 - x_3} \geq 0.
		\end{aligned}
	\end{equation}
	This process transforms a variable into the derivative. Repeating this process, we obtain
	\begin{equation*}\label{eq.014}
		\begin{aligned}
			\text{(\ref{eq.011})} &\Leftrightarrow \text{(\ref{eq.013})}\\
			& \Leftrightarrow V^{\prime \prime} \ebra{x_1 + x_2} - V^{\prime\prime}\ebra{x_1 - x_2} \geq 0, \qquad x_i \geq 0.\\
			& \Leftrightarrow V^{\prime\prime\prime} \ebra{x_1}\geq 0 , \qquad x_1 \geq 0,
		\end{aligned}
	\end{equation*}
	which is precisely the condition required above. Moreover, by the continuity of $V$, if $V^{\prime \prime\prime}\ebra{x_1} > 0$ for some $x_1>0$, the inequality \eqref{eq.011} becomes strict in a neighborhood containing $x_1$ in $\mathbb{R}^4$. This yields
	\begin{equation*}\label{eq.015}
		\mathbb{E} \prod_{i=1}^4 \ebra{\ebra{B\textbf{X}}^{(i)}}^{m^{(i)}} > 0,
	\end{equation*}
	as long as $m^{(i)}$ are all odd. This leads to a condition under which the GHS inequality is strict for any $m > 0$: $V^{\prime\prime\prime} \not\equiv 0$ on $\ebra{0, +\infty}$.
	
	From another aspect, we can observe from \text{(\ref{eq.F.9})} that, when $m^{(i)}$ all odd, the expectation is non-negative and irrelevant to $m$. So we have the dichotomy: either $\eangle{\ebra{x - \eangle{x}}^3}_\rho \ebra{\lambda, \cdot} \equiv 0$ for all $m \geq 0$, or $\eangle{\ebra{x - \eangle{x}}^3}_\rho \ebra{\lambda, \cdot} < 0$ for all $m > 0$. However, $\partial^3_{m, m, m} \ln Z \equiv 0$ implies $V$ is quadratic, or $\rho\ebra{\lambda, 0}$ is Gaussian, which is also the same condition for equality.
	
	\section{A monotonicity in temperature inequality for the first moment}\label{numer}
	In this appendix, we establish a monotonicity inequality for the first moment with respect to temperature, a result that may be of independent interest. As an application, this inequality yields the transversality condition as well as characterizes the global behavior of the double-well phase transition model. Specifically, the result states that for any potential $\bar{V}$ exhibiting superquadratic growth and satisfying
	\begin{equation*}
		\text{ $\bar{V} \in C^3$ even, $\lim\limits_{x \to \pm\infty} \bar{V}\ebra{x} = \infty$, and $\bar{V}^\prime$ convex on $[0, \infty)$},
	\end{equation*}
	with the partition function 
	$$Z\ebra{\sigma, m} = \int_{\mathbb{R}} \exp(-2/\sigma^2\cdot\ebra{\bar{V}\ebra{x} - mx}) \dif x, $$ 
	and the Gibbs measure
	$$\rho\ebra{\sigma, m}(x) = \exp(-2/\sigma^2\cdot \ebra{\bar{V}(x) - mx}) / Z\ebra{\sigma, m}, $$
	the following statements hold:
	\begin{equation}\label{eq.143.numer}\tag{Ineq}
		\begin{aligned}
			& \partial_\sigma (\int_{\mathbb{R}} x \cdot \rho\ebra{\sigma, m} \dif x)\leq 0, \quad \text{ for all } m\geq 0. \\
			& \text{The inequality is strict for any $m > 0$ if and only if } \bar{V}^{\prime\prime\prime} \not\equiv 0 \text{ on } [0, \infty). \text{ Namely, } \\
			& \text{} \bar{V} \text{ is not } \text{quadratic or } \rho\ebra{\lambda, m} = \exp\ebra{\lambda \cdot \ebra{\bar{V}\ebra{x} - mx}} / Z\ebra{\sigma, m} \text{ is  not Gaussian. }
		\end{aligned}
	\end{equation}
	
	To prove this inequality, let $X$ and $Y$ be i.i.d. random variables. By invoking the covariance identity $\mathbb{E}\ebra{f(X) - f(Y)}\ebra{g(X) - g(Y)} = 2 \ebra{\mathbb{E} f(X) g(X) - \mathbb{E}f(X) \mathbb{E}g(X)}$, we obtain
	\begin{equation*}\label{eq.numer.1}
		\begin{aligned}
			\partial_\sigma \left(\int_{\mathbb{R}} x \cdot \rho\ebra{\sigma, m} \dif x\right) & = 4/\sigma^3 \cdot \left(\eangle{x \cdot \Phi_m\ebra{x}}_{\rho} - \eangle{x}_\rho \cdot \eangle{\Phi_m\ebra{x}}_\rho\right) \\
			& = \frac{2}{\sigma^3}\cdot \int_{\mathbb{R}^2} \ebra{x - y}  \ebra{\Phi_m\ebra{x} - \Phi_m\ebra{y}} \cdot \rho^{\otimes2}\ebra{\sigma, m}\ebra{x, y} \dif x \dif y,
		\end{aligned}
	\end{equation*}
	where 
	\begin{equation*}
		\Phi_m\ebra{x} := V\ebra{x} - mx, \qquad \rho^{\otimes2}\ebra{\sigma, m}\ebra{x, y} := \rho\ebra{\sigma, m} \ebra{x} \cdot \rho\ebra{\sigma, m}\ebra{y}. 
	\end{equation*}
	By the identity
	\begin{equation*}
		\begin{aligned}
			& \ebra{x - y}\cdot \ebra{\Phi_m\ebra{x} - \Phi_m\ebra{y}} = \frac{1}{2} \ebra{x - y}^2 \cdot \ebra{V^\prime\ebra{x} + V^\prime\ebra{y} - 2m} + R\ebra{x, y}, \\
			& R\ebra{x, y} := \ebra{x - y}\cdot\ebra{V\ebra{x} - V\ebra{y} - \frac{1}{2}\ebra{x - y}\cdot \ebra{V^\prime\ebra{x} + V^\prime\ebra{y}}}, 
		\end{aligned}
	\end{equation*}
	upon integrating against the corresponding product measure, an application of the integration-by-parts formula reveals that the expectation of the first term vanishes identically
	\begin{equation*}
		\begin{aligned}
			& \int_{\mathbb{R}^2} \ebra{x - y}^2 \cdot \ebra{V^\prime\ebra{x} + V^\prime\ebra{y} - 2m} \cdot \exp\ebra{{-\frac{2}{\sigma^2}}\cdot \ebra{\Phi_m\ebra{x} + \Phi_m\ebra{y}}} \dif x \dif y\\
			& = -\frac{\sigma^2}{2} \cdot \int_{\mathbb{R}^2} \ebra{x - y}^2 \cdot \text{div} \left(\exp\ebra{{-\frac{2}{\sigma^2}}\cdot \ebra{\Phi_m\ebra{x} + \Phi_m\ebra{y}}}, \exp\ebra{{-\frac{2}{\sigma^2}}\cdot \ebra{\Phi_m\ebra{x} + \Phi_m\ebra{y}}}\right) \dif x \dif y\\
			& = \frac{\sigma^2}{2} \cdot \int_{\mathbb{R}^2} \nabla \ebra{\ebra{x - y}^2} \cdot \left(\exp\ebra{{-\frac{2}{\sigma^2}}\cdot \ebra{\Phi_m\ebra{x} + \Phi_m\ebra{y}}}, \exp\ebra{{-\frac{2}{\sigma^2}}\cdot \ebra{\Phi_m\ebra{x} + \Phi_m\ebra{y}}}\right) \dif x \dif y\\
			& = 0. 
		\end{aligned}
	\end{equation*}
	The application of the integration-by-parts formula is justified by the superquadratic growth of $V$. Thus, it suffices to show that for $m \geq 0$,
	\begin{equation}\label{eq.numer.2}
		\partial_\sigma \left(\int_{\mathbb{R}} x \cdot \rho\ebra{\sigma, m} \dif x\right) = \frac{2}{\sigma^3} \cdot \int_{\mathbb{R}^2} R\ebra{x, y} \cdot \rho^{\otimes 2} \ebra{\sigma, m} \ebra{x, y} \leq 0. 
	\end{equation}
	Introducing the change of variables $z = x + y$ and $w = x - y$, \eqref{eq.numer.2} reduces to
	\begin{equation}\label{eq.numer.3}
		\frac{1}{Z^2\cdot \sigma^3} \int_{\mathbb{R}^2} \tilde{R}\ebra{z, w} \cdot \exp(-\frac{2}{\sigma^2}\cdot \ebra{H_0\ebra{z, w}}) \cdot \exp\ebra{\frac{2}{\sigma^2}\cdot mz} \dif z \dif w, 
	\end{equation}
	with 
	\begin{equation*}
		\begin{aligned}
			& \tilde{R} \ebra{z, w} = w \cdot \left(V\ebra{\frac{1}{2}\ebra{z + w}} - V\ebra{\frac{1}{2}\ebra{z - w}} - \frac{1}{2} w \cdot \ebra{V^\prime \ebra{\frac{1}{2}\ebra{z + w}} + V^\prime \ebra{\frac{1}{2}\ebra{z - w}}}\right), \\
			& H_0\ebra{z, w} = V\ebra{\frac{1}{2}\ebra{z + w}} + V\ebra{\frac{1}{2}\ebra{z - w}}. 
		\end{aligned}
	\end{equation*}
	By the evenness of $\bar{V}$, the functions $\tilde{R}$ and $H_0$ satisfy the following symmetry relations
	\begin{equation*}
		\begin{aligned}
			& \tilde{R}\ebra{z, -w} = \tilde{R}\ebra{z, w}, \qquad \ \ \ \tilde{R}\ebra{-z, w} = -\tilde{R}\ebra{z, w}, \\
			& H_0\ebra{-z, w} = H_0\ebra{z, w}, \qquad H_0\ebra{z, -w} = H_0\ebra{z, w}. 
		\end{aligned}
	\end{equation*}
	Exploiting these symmetries allows us to restrict the integration in \eqref{eq.numer.3} to the positive quadrant $\mathbb{R}^2_+$
	\begin{equation*}
		\partial_\sigma \left(\int_{\mathbb{R}} x \cdot \rho\ebra{\sigma, m} \dif x\right) = \frac{4}{Z^2\cdot \sigma^3} \int_{\mathbb{R}^2_+} \tilde{R}\ebra{z, w} \cdot \exp(-\frac{2}{\sigma^2}\cdot \ebra{H_0\ebra{z, w}}) \cdot \sinh\ebra{\frac{2}{\sigma^2}\cdot mz} \dif z \dif w. 
	\end{equation*}
	When $m > 0$ and $z > 0$, we clearly have $\sinh(2mz/\sigma^2) > 0$. Thus, we are left to show that $\tilde{R}(z, w) \leq 0$ for $z, w \geq 0$. This is equivalent to showing that $\tilde{R} \leq 0$ in the region defined by $x + y > 0$ and $x - y > 0$,
	\begin{equation*}
		V\ebra{x} - V\ebra{y} - \frac{1}{2} \ebra{x - y}\cdot \ebra{V^\prime \ebra{x} + V^\prime\ebra{y}} \leq 0. 
	\end{equation*}
	Indeed, when $0 \leq y < x$, the desired inequality follows directly from the convexity of $V^\prime$ on $[0, \infty)$
	\begin{equation}\label{eq.numer.4}
		\begin{aligned}
			V\ebra{x} - V\ebra{y} & = \int_0^1 \frac{\text{d}}{\text{dt}} V\ebra{y + t\ebra{x - y}} \dif t = (x - y) \int_0^1 V^\prime(tx + (1 - t)y) \dif t\\
			& \leq (x - y) \int_0^1 t V^\prime\ebra{x} + (1 - t) V^\prime\ebra{y} dt = \frac{1}{2}\ebra{x - y}\cdot \ebra{V^\prime \ebra{x} + V^\prime \ebra{y}}. 
		\end{aligned}
	\end{equation}
	Alternatively, when $y < 0 < x$, the condition $-y < x$ implies that
	\begin{equation}\label{eq.numer.5}
		\begin{aligned}
			V(x) - V(y) & = \int_0^1 \frac{\text{d}}{\text{dt}} V\ebra{-y + t\ebra{x + y}} \dif t = (x + y) \int_0^1 V^\prime(tx + (1 - t)\cdot \ebra{-y}) \dif t\\
			& \leq (x + y) \int_0^1 tV^\prime\ebra{x} + \ebra{1 - t}V^\prime\ebra{-y} \dif t = \frac{1}{2}(x + y) \cdot \ebra{V^\prime\ebra{x} -  V^\prime\ebra{y}} \\
			& \leq \frac{1}{2}\ebra{x - y}\cdot \ebra{V^\prime\ebra{x} + V^\prime\ebra{y}}. 
		\end{aligned}
	\end{equation}
	Here, the last inequality holds because the property $V^\prime(x)/x \geq V^\prime(-y)/(-y)$ is guaranteed by the oddness and convexity of $V^\prime$ on $[0, \infty)$. Notably, the inequalities \eqref{eq.numer.4} and \eqref{eq.numer.5} become strict provided that $V^\prime$ is strictly convex.
	

	\addtocontents{toc}{\protect\setcounter{tocdepth}{2}}   
	\section*{Acknowledgements}
	This work is supported by the National Key R\&D Program of China (No. 2023YFA1009200), NSFC (Grants 12531009 and 11925102), and Liaoning Revitalization Talents Program
	(Grant XLYC2202042).
	
	\section*{Data availability}
	No data was used for the research described in the article.


\end{document}